\newcommand{\RRe}{\mathop{\rm Re}}
\newcommand{\eps}{\varepsilon}
\newcommand{\ft}{\mathfrak t}
\newcommand{\x}{\mathrm x}
\newcommand{\y}{\mathrm y}
\newcommand{\z}{\mathrm z}
\newcommand{\embeds}{\hookrightarrow}
\newcommand{\lowdim}[1]{\mathbf{#1}}
\newtheorem{theorem}{Theorem}[section]
\newtheorem{lemma}[theorem]{Lemma}%[section]
\newtheorem{corollary}[theorem]{Corollary}%[section]
\newtheorem{proposition}[theorem]{Proposition}%[section]
\theoremstyle{definition}
\newtheorem{assu}[theorem]{Assumption}%[section]
\newtheorem{rem}[theorem]{Remark}%[section]
\theoremstyle{remark}
\numberwithin{equation}{section}
\newcommand{\field}[1]{\mathbb{#1}}
\newcommand{\R}{\field{R}}
\DeclareMathOperator{\dom}{dom}    %domain of an operator
\DeclareMathOperator{\dist}{dist}  %distance
\DeclareMathOperator{\supp}{supp}  %support of a function
\DeclareMathOperator{\diag}{diag}   %divergence
\newcommand{\VV}{V} 
\newcommand{\dd}{\,\mathrm{d}} 
\newcommand{\cA}{\mathcal A}
\newcommand{\cH}{\mathcal{H}}
\newcommand{\divv}{\operatorname{div}}
\begin{document}
\title[H\"older regularity for domains of fractional powers]{H\"older regularity for domains of
  fractional powers of 
  elliptic operators with 
  mixed boundary conditions}

\author{Robert Haller, Hannes Meinlschmidt, Joachim Rehberg}

\begin{abstract}
  This work is about global H\"older regularity for solutions to elliptic partial differential
  equations subject to mixed boundary conditions on irregular domains. There are two main
  results. In the first, we show that if the domain of the realization of an elliptic
  differential operator in a negative Sobolev space with integrability $q>d$ embeds into a space
  of H\"older continuous functions, then so do the domains of suitable fractional powers of this
  operator. The second main result then establishes that the premise of the first is indeed
  satisfied. The proof goes along the classical techniques of localization, transformation and
  reflection which allows to fall back to the classical results of Ladyzhenskaya or
  Kinderlehrer. One of the main features of our approach is that we do not require Lipschitz
  charts for the Dirichlet boundary part, but only an intriguing metric/measure-theoretic
  condition on the interface of Dirichlet- and Neumann boundary parts. A similar condition was
  posed in a related work by ter~Elst and Rehberg in 2015~\cite{ER}, but the present proof is
  much simpler, if only restricted to space dimension up to $4$.
\end{abstract}

\maketitle

\section{Introduction} \label{s-introduc}
In this paper, we consider global H\"older regularity for solutions to elliptic partial differential equations
subject to mixed boundary conditions on irregular domains, in the exemplary form
\begin{equation}
  \left.
    \begin{aligned}%[t]
    -\divv(\mu\nabla u) + u & = f &&\text{in}~\Omega,\\u & = 0 &&\text{on}~D \subseteq
    \partial\Omega,\\ \nabla u \cdot \nu & = g && \text{on}~N\coloneqq \partial\Omega\setminus D
  \end{aligned}
  \quad \right\}\label{eq:model-problem}
\end{equation}
for a bounded open set $\Omega\subseteq \R^d$ with the unit outer normal $\nu$ at $N$, a bounded
and elliptic
coefficient function $\mu$ taking its values in $\R^{d\times d}$, and integrable functions $f$
on $\Omega$ and $g$ on $N$. It is well known that H\"older continuity is a natural regularity
class for solutions to elliptic problems such as~\eqref{eq:model-problem} and
H\"older-equicontinuous sets of functions are precompact in the space of uniformly
continuous functions by the Arzel\`{a}-Ascoli theorem. Such properties are, aside from intrinsic
value, invaluable in the
treatment of nonlinear problems. It is thus not surprising that this is a
well researched subject and affirmative results are known even in the case of irregular domains
and mixed boundary conditions with very weak compatibility conditions as established for example
in~\cite{ER} by one of the authors.

The intention of this paper is essentially twofold: Firstly, we prove that if the domain
$\dom(\cA_q+1)$ of the functional-analytic realization $\cA+1$ of the elliptic differential
operator in~\eqref{eq:model-problem} in a negative Sobolev space $W^{-1,q}_D(\Omega)$ embeds
into a space of H\"older-continuous functions, then so does the domain $\dom((\cA_q+1)^\sigma)$
of a fractional power of $\cA+1$ when $\sigma > \frac12 + \frac{d}{2q}$. (We will introduce all
objects properly in the main text below.) It is well known that $q>d$ is the expected condition
in this context. This is done under the quite general assumption that $\overline N$ admits
bi-Lipschitzian boundary charts and $D$ is Ahlfors regular; the coefficient function $\mu$ is
not supposed to be more than measurable, bounded and elliptic. (See Assumption~\ref{a-general} below.) The main
motivation for this result are semilinear parabolic problems, since it is well known that since
the semigroup associated to $\cA_q+1$ will be analytic, the domain $\dom((\cA_q+1)^\sigma)$ will
be a natural phase space, see e.g.~\cite[Ch.~6.3]{Pazy}. We will come back to this below in a
bit more detail.

Secondly, we consider a framework where the assumption of the first part is in fact satisfied;
that is, we show that $\dom(\cA_q+1)$ indeed embeds into a H\"older space. This framework will
essentially encapsulate the geometric assumptions from the first part, together with a classical
assumption preventing outward cusps for $D$, and an intriguing metric/measure-theoretic
condition for the interface of $D$ and $N$, the Dirichlet- and Neumann boundary parts, which
will ultimately allow to show that also at this interface, we can transform the problem under
consideration to one which satisfies the foregoing classical assumption. (See
Assumption~\ref{a-Interface} below.) To this
end, we revisit~\cite{ER} where the associated result was already established by means of
Sobolev-Campanato spaces and of De~Giorgi estimates.  These  are both quite natural and powerful, but also
quite involved. However, for spatial dimensions $d$ up to $4$ one can avoid this machinery and
rely on the classical results on H\"older continuity for solutions of the pure Dirichlet problem
by Ladyzhenskaya and Kinderlehrer, which require much simpler technical means. We carry out this
simplified approach here. A welcome byproduct  is that we in fact easily obtain a result which is
uniform in the given geometry and the $L^\infty(\Omega)$-bound and ellipticity constant of the
coefficient function $\mu$. Such statements are extremely useful in the treatment of, say, a
quasilinear counterpart of~\eqref{eq:model-problem}, and they are neither included in~\cite{ER} nor easily
traced there.

\subsection*{Motivation} It was already mentioned above that one of the main motivations to consider H\"older regularity
for $\dom(\cA_q + 1)$ and associated domains of fractional powers comes from semilinear
parabolic equations. Indeed, consider the following abstract one, posed in some Banach space $X$:
\begin{equation} \label{e-semiparab}
u'(t) + \cA u(t) + u(t) = F(t,u(t)), \quad u(0)=u_0,
\end{equation}
where $\cA+1$ is the realization of an elliptic operator such as the one
in~\eqref{eq:model-problem} in $X$.  The way to treat such a problem by means of analytic
semigroups is well established by now under weak assumptions on $F$, which require that the
coordinate mappings $t \mapsto F(t,v)$ for fixed $v$ and $v \mapsto F(t,v)$ for fixed $t$ are
reasonably well behaved, cf.~\cite[Ch.~6.3]{Pazy}, the latter usually referring to Lipschitz
continuity on bounded sets of the domain of a fractional power of $\cA+1$. A most interesting and
relevant case is that of Nemytskii operators induced by scalar functions; these for example
occur naturally in the form of polynomials in reaction-diffusion problems. Whether the
abstract framework can capture these nonlinearities depends on the precise framework and associated growth
properties and is usually the central point to verify when doing analysis for such problems. In fact, in the most prominent case $X = L^2(\Omega)$ and space dimensions up to $3$, one can
show that not only the domain of the elliptic operator $\cA+1$ in $L^2(\Omega)$ embeds into
$L^\infty(\Omega)$, but already the domain of a fractional power does so. This is established in
an even more general context than the present one in~\cite{ERe1}, but see also~\cite[Chapter
6.1]{Ouh} and Corollary~\ref{c-2} below. Since bounded functions are, essentially, ignorant of
growth induced by a Nemytskii operator, this allows to consider very rough nonlinearities $F$
induced by such operators.

However, this strong property comes at a price, namely that a realization of $\cA+1$ in $L^2(\Omega)$ implicitly
restricts the considered problem to a strong interpretation with homogeneous Neumann boundary
conditions. But this setup is in general insufficient for more sophisticated problems
arising in real world applications. This already concerns nonhomogeneous Neumann
boundary data. But also, consider for example a (two-dimensional) surface $S$
in the (closure of the) domain $\Omega \subset \R^3$ and let $\cH_2|_S$ be the induced two-dimensional
surface measure. Let $\phi$ be a scalar and locally Lipschitz function and let $\Phi$ be
the associated Nemytskii operator.  Suppose that $F$ in~\eqref{e-semiparab} is given by $v
\mapsto \Phi(v)  \cH_2|_S$.  Such a term would correspond to a nonlinear modulation for a
jump-type condition for the solution $u(t)$ along $S$ in a strong problem formulation, and, indeed, such
conditions appear for example in the analysis of the semiconductor equations if surface charge densities,
concentrated on $S$,  are involved, see~\cite{Hannes,Disser} for a recent analytical treatment; see
also~\cite{Semicond1,Semicond2} for more physical background. (In this particular example, there
are also nonlinear modulations on the boundary.)

Clearly, in such a setup, it is not sufficient to have
$\dom((\cA+1)^\sigma) \embeds L^\infty(\Omega)$ only, since this will in general not be enough
to interpret, much less control, $\Phi(v)$ on the lower-dimensional surface $S$ in dependence of
$v \in \dom((\cA+1)^\sigma)$. Alternatively, one could try to rely on trace operators to have a
good control on $v \in L^r(S;\cH_2)$ and then $\Phi(v)$ for $r$ large enough in dependence on
the growth conditions of $\phi$. But this in turn would require to pass through a Sobolev space
$W^{s,p}_D(\Omega)$ with $s>1/p$ and justifying such a setup might be quite hard if one goes
away from $(s,p) = (1,2)$, whereas the latter is rather limited, at least for $d=3$.

From our point of view, it is thus preferable to rely on H\"older continuity for the domain of a
fractional power of $\cA+1$. Then elements from such a domain are well defined on any subset of
$\overline\Omega$ and, as mentioned above, there are even compactness properties to exploit. It
turns out that the negative Sobolev space $W^{-1,q}_D(\Omega)$, which is the (anti-) dual of
$W^{1,q'}_D(\Omega)$, with $q>d$, provides the adequate functional-analytic framework $X$ to
obtain this H\"older continuity for the domain of a fractional power of the $X$-realization of
$\cA+1$, and then treat problems such as~\eqref{e-semiparab} with inhomogeneous data on
lower-dimensional surfaces in $\overline\Omega$, be that $\partial\Omega$ or $S$. Indeed,
negative Sobolev spaces are capable of representing distributional objects such as induced by
inhomogenenous data on lower-dimensional surfaces, and as already mentioned above, it is well
known that $q>d$ is the natural threshold for which one can obtain bounded or even continuous
functions as elements of the domain of the associated realization $\cA_q+1$, that is, for
solutions $u$ to the abstract problem $(\cA_q+1)u = f$ with $f \in W^{-1,q}_D(\Omega)$.

\subsection*{Context} As explained above, H\"older regularity for elliptic problems such
as~\eqref{eq:model-problem} is a classical and ubiquitous subject in the regularity theory for
partial differential equations. We locate our work between~\cite{ER} with essentially the same,
extremely general geometric setup, but a much more sophisticated and involved machinery to
achieve the desired result (without a direct claim of uniformity), and~\cite{HaR}, where the
less general framework of Gr\"oger regularity is used. The technique of the present work, in
terms of localization of an elliptic problem~\eqref{eq:model-problem} and associated
transformation to regular sets plus a possible reflection argument, is similar to the one
employed in~\cite{HaR}, but deviates from there along the different assumptions on $D$. We note
also that while there is no uniformity statement in~\cite{HaR}, there is the recent
preprint~\cite{Dondl} in which the authors there trace the constants in~\cite{HaR} to obtain a
uniform results, which is then even transferred to solutions of parabolic problems. In all
mentioned works, the coefficient function is also only assumed to be measurable, bounded and elliptic, as in the
present one.

\subsection*{Overview} We set the stage with notation and the introduction of function spaces
and differential operators with some associated properties in
Section~\ref{ss-prelim}. Section~\ref{sec-Embed} then deals with the first main result,
Theorem~\ref{t-main1}: if the domain of the $W^{-1,q}_D(\Omega)$-realization of $\cA+1$ embeds
into a H\"older space, then so does the domain of a fractional power. The proof is based on
ultracontractivity of the semigroups associated to the $L^p(\Omega)$-realization of $\cA+1$,
which we transfer to the negative Sobolev scale via the Kato square root
property. Section~\ref{s-Hoelderdomai} then deals with showing that the premise of the foregoing
part is in fact satisfied in a wide geometric setting in Theorem~\ref{t-result}. For this
result, the proof is somewhat extensive. We thus prepare it with a series of preliminary results
on the techniques of localization, transformation and reflection in
Section~\ref{sec:local-transf-techn} before proceeding to the actual meat of the proof in
Section~\ref{sec:proof-theorem-reft}.
\section{Preliminaries} \label{ss-prelim}
We first clarify some basic notation. The spatial dimension will be $d > 1$. For $\x = (x_1,\dots,x_d) \in \R^d$ and $r > 0$ we denote the open
ball around $\x$ with radius $r$ by $B_r(\x)$. The $d$-dimensional Lebesgue
measure in $\R^d$ will be written as $\lambda_d$ and $\omega_d = \lambda_d(B_1(0))$ means the
volume of the unit ball. Given
a normed vector space $\VV$, we denote by $\VV^*$ the Banach space of \emph{antilinear} continuous
functionals on $\VV$. Finally, we use the convention of a generic constant $c$ that may vary
from occurence to occurence but never depends on the free variables in the actual context. All
other notation will be standard.

\subsection{Function spaces}

Let $\Lambda$ be a nonempty, bounded open subset of $\R^d$ and let $F \subseteq \partial\Lambda$
be a closed subset of its boundary. Then, for
$q \in [1,\infty]$, the first-order Sobolev space $W^{1,q}(\Lambda)$ is given by the set of
$L^q(\Lambda)$ functions with weak first-order derivatives in $L^q(\Lambda)$. We set
\[
C^\infty_F(\Lambda)\coloneqq  \Bigl\{ u|_\Lambda \colon u \in C^\infty_c(\R^d) \text{ with } \mathrm{supp}(u) \cap F = \emptyset \Bigr\}
\]
and  we use this space to define the first-order \emph{Sobolev space with
  mixed boundary conditions} $W^{1,q}_F(\Lambda)$ as the closure of $C^\infty_F(\Lambda)$ in
$W^{1,q}(\Lambda)$. Furthermore, by $W^{-1,q}_F(\Lambda) \coloneqq W^{1,q'}_F(\Lambda)^*$ we denote
the space of continuous \emph{antilinear} functionals on $W^{1,q'}_F(\Lambda)$, where (here and
in all what follows) $1/q + 1/q' = 1$. Finally, as commonly used we write $W^{1,q}_0(\Lambda)$
for $W^{1,q}_{\partial \Lambda}(\Lambda)$ and $W^{-1,q}(\Lambda)$ for $W^{-1,q}_{\partial
  \Lambda}(\Lambda)$.

For $\alpha \in (0,1)$, let $C^\alpha(\Lambda)$ denote the usual spaces of bounded and
$\alpha$-H\"older continuous functions on $\Lambda$ with their norm given by the sum of the supremum
norm and the H\"older seminorm. Of course, every function in $C^\alpha(\Lambda)$ admits a unique
$\alpha$-H\"older continuous
extension to $\overline\Lambda$, so we will not discriminate between a H\"older-function on
$\Lambda$ and $\overline\Lambda$.

\subsection{Geometric setup}
We next introduce some geometric assumptions on the spatial domain $\Omega$. Throughout the
article, $\Omega$ denotes a given nonempty bounded open subset of $\R^d$ and
$D \subseteq \partial \Omega$ is a closed portion of its boundary, the designated Dirichlet
boundary part. We do not exclude that $\cH_{d-1}(D) = 0$, the $(d-1)$-dimensional
Hausdorff measure. The Neumann boundary part shall be denoted by
$N \coloneqq \partial \Omega \setminus D$.
\begin{assu} \label{a-general} We consider the following geometric assumptions for $\Omega$ and $D$:
\begin{enumerate}[(a)]
\item \label{a-general-1}
 For all $\x \in \overline{N}$, there is an open neighbourhood $V_\x$ and a bi-Lipschitz mapping
 $\phi_\x$ from a neighbourhood of $\overline{ V_\x}$ into $\R^d$ such that $\phi_\x(V_\x) =
 (-1,1)^d$, $\phi_\x( \Omega \cap  V_\x) = \{ \x \in (-1,1)^d \colon x_d < 0 \}$,
 $\phi_\x(\partial \Omega \cap V_\x) = \{ \x \in (-1,1)^d \colon x_d = 0 \}$ and $\phi_\x(\x) =
 0$. 
\item \label{a-general-2}
  $D$ is a $(d-1)$-set, i.e., there are constants $c_1, c_2 > 0$ such that
  for all $r \in (0,1]$ and all $\x \in D$ there holds
 \[ c_1 r^{d-1} \le \cH_{d-1} \bigl( B_r(\x) \cap D \bigr) \le c_2 r^{d-1} % \qquad
   % \x \in D,~r \in (0,1],
 \]
 where $\cH_{d-1}$ denotes the $(d-1)$-dimensional Hausdorff measure.
%\item \label{a-general-3}The set $D$ is so fat that the constant functions do not belong to $W^{1,2}_D(\Omega)$.
\end{enumerate}
\end{assu}
\begin{rem} \label{r-shrink} % Some comments on Assumption~\ref{a-general}:
 % \begin{enumerate}[(a)]
 % \item\label{r-shrink:i}
   In Assumption~\ref{a-general}~\ref{a-general-1}, for
   $\x \in N = \partial \Omega \setminus D$ one may assume without loss of generality that the
   local Neumann boundary part around $\x$ is transformed to the full midplate of the cube, that is,
   $\phi_\x (N \cap V_\x) = \{ \x \in (-1,1)^d \colon x_d = 0\}$. In fact, since $N$ is a
   (relatively) open subset of $\partial \Omega$, the image $\phi_\x(N \cap V_\x)$ is a
   (relatively) open subset of $\{ \x \in (-1,1)^d \colon x_d = 0\}$ that contains $0$. Thus, one may
   shrink $V_\x$ to a suitable set $\phi_\x^{-1}((-\eps,\eps)^d)$ and afterwards rescale
   $\phi_\x$ to $\frac{1}{\eps} \phi_\x$.
 %  \item\label{r-shrink:ii} Note that even when $D \neq \emptyset$, Assumption~\ref{a-general}~\ref{a-general-2} does not imply part~\ref{a-general-3} of the same assumption.\footnote{Haben wir da ein sch\"ones Beispiel/ein Zitat daf\"ur?} However, this is true under some additional mild condition, cf.~\cite[Lemma~7.3]{hardy}, e.g.\@ it suffices that $D$ contains one (relatively) inner point.
 % \end{enumerate}
\end{rem}
Already the geometric setup of Assumption~\ref{a-general}~\ref{a-general-1} allows to construct
a continuous linear extension operator for first-order Sobolev spaces with mixed boundary
conditions. Indeed, the following result can be found in \cite[Thm.~1.2 and Prop.~3.4]{BBHT}:
\begin{proposition} \label{p-2} Suppose that $\Omega$ and $D$ meet
  Assumption~\ref{a-general}~\ref{a-general-1}.  Then there exists a continuous extension
  operator from $W^{1,1}_D(\Omega)$ to $W^{1,1}_D(\R^d)$ that restricts to a continuous operator
  from $W^{1,p}_D(\Omega)$ to $W^{1,p}_D(\R^d)$ for all $p \in [1,\infty)$.
\end{proposition}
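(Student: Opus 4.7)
The plan is a classical partition-of-unity argument. By density of $C^\infty_D(\Omega)$ in $W^{1,p}_D(\Omega)$, it suffices to define the extension operator $E$ on this dense subspace and verify a uniform $W^{1,p}$ bound. First I would cover the compact set $\overline\Omega$ by finitely many open sets of three types: interior sets $U^{\mathrm{int}}_j$ with $\overline{U^{\mathrm{int}}_j}\subset\Omega$; sets $U^D_j$ whose closure meets $\partial\Omega$ only in $D$ (which exist for points in $D\setminus\overline N$ because $\overline N$ is closed); and the Neumann charts $V_{\x_k}$ from Assumption~\ref{a-general}~\ref{a-general-1}, which cover $\overline N$. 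Then I would fix a smooth partition of unity $\{\eta_i\}$ subordinate to this cover.

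For $u\in C^\infty_D(\Omega)$ decompose $u=\sum_i\eta_i u$. Each interior piece extends trivially by zero to $\R^d$. Each piece $\eta_i u$ supported in some $U^D_j$ vanishes in a neighborhood of $D\cap U^D_j$ (since $\supp u$ is disjoint from $D$) and therefore extends by zero to a smooth function on $\R^d$. For each piece $\eta_i u$ supported in some $V_{\x_k}$, I would transport it by $\phi_{\x_k}$ to the lower half-cube $(-1,1)^{d-1}\times(-1,0)$, extend by even reflection across the midplate $\{x_d=0\}$ to the full cube $(-1,1)^d$, and transport back by $\phi_{\x_k}^{-1}$, multiplying with a cutoff supported in $V_{\x_k}$ to keep the support local. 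Since composition with a bi-Lipschitz map is bounded on $W^{1,p}$ and even reflection preserves $W^{1,p}$-norms, this yields a bounded extension $W^{1,p}(\Omega\cap V_{\x_k})\to W^{1,p}(V_{\x_k})$. Summing the pieces produces $Eu$ with $\|Eu\|_{W^{1,p}(\R^d)}\le c\,\|u\|_{W^{1,p}(\Omega)}$, and inspection of the construction shows that the constant may be chosen uniformly for $p\in[1,\infty)$.

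It remains to verify that $Eu$ lies in $W^{1,p}_D(\R^d)$ rather than merely in $W^{1,p}(\R^d)$. For $u\in C^\infty_D(\Omega)$ the resulting $Eu$ actually has support disjoint from $D$: this is clear for the interior and Dirichlet-only pieces, while for each reflection piece it follows because $D\cap V_{\x_k}\subseteq\phi_{\x_k}^{-1}(\{x_d=0\})$ and the midplate is pointwise fixed by the even reflection, so a neighborhood of $D\cap V_{\x_k}$ on which $u$ vanishes transports to a neighborhood of $D\cap V_{\x_k}$ on which the reflection vanishes. Hence $Eu\in C^\infty_D(\R^d)$. Continuity of $E$ in $W^{1,p}$ together with the density of $C^\infty_D(\Omega)$ then implies $Eu\in W^{1,p}_D(\R^d)$ for all $u\in W^{1,p}_D(\Omega)$, and the case $p=1$ gives the primary statement while $p\in(1,\infty)$ yields the restrictions.

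The main technical delicacy is handling the Dirichlet–Neumann interface inside a Neumann chart $V_{\x_k}$: the image $\phi_{\x_k}(D\cap V_{\x_k})$ is merely some closed subset of the midplate and could have complicated geometry, since Assumption~\ref{a-general}~\ref{a-general-1} imposes no structure on $D$ beyond closedness. What rescues the construction is that the even reflection across the midplate is a pointwise operation that neither sees nor interacts with the internal structure of $D$ within that midplate; it suffices that $D$ lies inside the fixed set of the reflection. A secondary routine check is that pre- and post-composition with the bi-Lipschitz $\phi_{\x_k}$ sends $W^{1,p}_D$ into $W^{1,p}_{\phi_{\x_k}(D\cap V_{\x_k})}$ and back, which follows directly from approximation by $C^\infty_F$ functions and the chain rule for Sobolev functions under bi-Lipschitz changes of coordinates.
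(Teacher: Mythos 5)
Your proof takes a genuinely different route from the paper. The paper does not give its own argument at all: it simply cites \cite[Thm.~1.2 and Prop.~3.4]{BBHT}, which establishes the extension result in a considerably more general framework. You instead give a direct, classical partition-of-unity construction tailored to the bi-Lipschitz chart geometry of Assumption~\ref{a-general}~\ref{a-general-1}. The decomposition into interior patches, Dirichlet-only patches and Neumann charts is sound, and the key observation that the zero extension suffices on the Dirichlet-only patches---because for $u\in C^\infty_D(\Omega)$ the support of the ambient $C^\infty_c$ representative has positive distance to $D$, and $\overline{U^D_j}\cap\partial\Omega\subseteq D$ forces $\supp(\eta_i u)$ to be compactly contained in $\Omega$---is precisely the point that lets you dispense with any regularity of $\partial\Omega$ near $D\setminus\overline N$. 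Your approach is more elementary and self-contained; the paper's citation buys generality and brevity.

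There is one inaccuracy worth correcting. You assert $Eu\in C^\infty_D(\R^d)$ for $u\in C^\infty_D(\Omega)$, but the even reflection across the midplate does not produce a smooth function (the normal derivative generically jumps), and composition with the merely bi-Lipschitz $\phi_{\x_k}$ likewise destroys smoothness; the reflected-transported pieces are only $W^{1,p}$. What is actually true---and what your support analysis in fact establishes---is that $\supp Eu$ is compact with positive distance to $D$. That is all you need: mollify $Eu$ at scale less than $\dist(\supp Eu,D)$ to obtain a sequence in $C^\infty_c(\R^d)$ with supports still disjoint from $D$, converging to $Eu$ in $W^{1,p}(\R^d)$. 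This shows $Eu\in W^{1,p}_D(\R^d)$ directly, after which your density and boundedness argument closes the proof exactly as written.
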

\begin{rem}\label{r-p2}
  Proposition~\ref{p-2} allows to establish the usual Sobolev embeddings, that is,
  $W^{1,q}_D(\Omega) \embeds L^p(\Omega)$ for $\frac1p = \frac1q-\frac1d$ if $q < d$ and
  $W^{1,q}_D(\Omega) \embeds C^{1-\frac{d}q}(\Omega)$ if $q > d$, in a straightforward manner,
  including compactness. In particular, for $d > 2$ the form domain $\VV = W^{1,2}_D(\Omega)$
  is embedded into $L^\frac {2d}{d-2}(\Omega)$, and in the case $d=2$ it embeeds into $L^p(\Omega)$ for every
  $p < \infty$.
\end{rem}

\subsection{Elliptic operators} We define elliptic operators via the form $\ft$ on $\VV
\coloneqq  W^{1,2}_D(\Omega)$ given by
\[
\ft(u,v) \coloneqq  \int_\Omega \mu \nabla u \cdot \nabla \overline v , \qquad u,v \in \VV .
\]
Here, $\mu$ is a real, measurable, bounded and uniformly elliptic coefficient function in the
sense that there exists some $\kappa_{\mathrm{ell}} > 0$ such that
$(\mu(\x) \xi, \xi)_{\R^d} \ge \kappa_{\mathrm{ell}} |\xi|^2$ for all $\xi \in \R^d$ and almost
all $\x \in \Omega$. Clearly, the form $\ft$ induces a natural operator
$\cA \colon \VV \to \VV^*$. For $q>2$, let $\cA_q$ be the part of $\cA = \cA_2$ in $W^{-1,q}_D(\Omega) \subset \VV^*$. By the Lax-Milgram lemma, $\cA + \lambda$ is a topological isomorphism between $V$
and $V^*$ for every $\lambda$ with $\RRe \lambda > 0$; hence,
$\sigma(\cA_q) \cap [\RRe z < 0] = \emptyset$ for every $q \geq2 $.

On the other hand, $\ft$ also induces an operator $A$ on $L^2(\Omega)$ by
\begin{align*}
  \dom A &\coloneqq \Bigl\{u \in V\colon \text{there exists } f \in L^2(\Omega) \colon \ft(u,v) =
  (f,v)_{L^2(\Omega)} \text{ for all } v \in V\Bigr\} \\
  Au & \coloneqq f, \text{ for } u \in \dom A.
\end{align*}
Since $\ft$ is $L^2(\Omega)$-elliptic, it is nowadays classical (e.g.~\cite[Thms.~1.54,~4.2
and~4.9]{Ouh}) that $-A$ is the generator of a contractive analytic C$_0$-semigroup $(e^{-At})$ on
$L^2(\Omega)$ which is both sub-Markovian and substochastic, that is, positivity preserving and
$L^\infty(\Omega)$- and $L^1(\Omega)$-contractive, from which we obtain the semigroup on every
$L^p(\Omega)$ for $p \in [1,\infty]$ by interpolation.

These semigroups are contractive for all $p \in [1,\infty]$ , they are strongly continuous for
$p \in [1,\infty)$, and they are analytic for $p \in (1,\infty)$, see~\cite[Prop.~3.12,
p.56/57\&96]{Ouh}. We denote the respective (negative) generators on $L^p(\Omega)$ by
$A_p$. Note that $\sigma(A_p) \cap [\RRe z < 0] = \emptyset$ for every $p\in [1,\infty)$ by the
Hille-Yosida theorem, and that the operators admit a bounded $H^\infty$ functional
calculus~(\cite[Cor.~3.9]{CMR}); in particular, their fractional powers are well
defined. Moreover, for $p>2$, the operators $A_p$ are the part of $A = A_2$ in $L^p(\Omega)$.

% We then consider the following operators
% for $q >2$:
% \begin{itemize}
%  \item $\mathcal A:\VV \to \VV^*$ is the operator induced by $\ft$,
%  \item $\mathcal A_q$ denotes the part of $\mathcal A$ in $W^{-1,q}_D(\Omega)$,
%  \item $A$ is the operator induced by $\ft$ on $L^2(\Omega)$.
%  \item $A_q$ denotes the part of $A$ in $L^q(\Omega)$.
% \end{itemize}

% The operators $A_q$ enjoy the following properties:
% %
% \begin{proposition} \label{p-LpPart}
%  Let $q>2$. Then $A_q$ is the part of $\mathcal A_q$ in  $L^q(\Omega)$ and does generate a contractive, analytic semigroup on $L^q(\Omega)$. The spectrum of $A_q$ does not meet the open negative half axis.\footnote{Beweis? Verweis?}
% \end{proposition}
%
All the properties mentioned so far do not require \emph{any} regularity assumption on
$\Omega$. Under the geometric assumptions from Assumption~\ref{a-general}, however, we can say a
bit more. Indeed, for $q\geq2$, several of the good properties of $A_q$ can be
transferred to $\cA_q$ by means of the square root, which we do next. 

\begin{proposition} \label{p-propAq} Let $q \in [2, \infty)$ and adopt
  Assumption~\ref{a-general}. Then the following hold
  true.
\begin{enumerate}[(a)]
\item \label{p-propAq2} The inverse square root operator $(\cA_q +1)^{-1/2}$ 
  provides a topological isomorphism between $W^{-1,q}_D(\Omega)$ and $L^q(\Omega)$.
\item \label{p-propAq1}
 The negative of the operator $\cA_q$ generates an analytic semigroup on $W^{-1,q}_D(\Omega)$.
\item \label{p-propAq3} For $s \in [0, \frac12)$, we have $\dom\bigl ((\cA_q + 1)^{1/2 + s} \bigr) = \dom\bigl( (A_q+1)^s \bigr)$.
 \end{enumerate}
\end{proposition}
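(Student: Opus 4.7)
The plan is to reduce everything to the Kato square root property on the $L^q$-scale, and then derive (b) and (c) from (a) via a standard similarity argument.

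For~(a), the starting point is the Kato square root property for the form $\ft$ in $L^2(\Omega)$: under Assumption~\ref{a-general}, $(A+1)^{1/2}\colon V \to L^2(\Omega)$ is a topological isomorphism (this is the resolved Kato conjecture, extended to the mixed boundary setting under our geometric hypotheses). Taking adjoints with respect to the $V$--$V^*$ pairing and using the analogous property for the adjoint form with transposed coefficient, one obtains that $(\cA+1)^{1/2}\colon L^2(\Omega) \to V^*$ is an isomorphism, which is the claim for $q=2$. For $q>2$, I would invoke the $L^q$-version of the Kato square root property---available for mixed boundary conditions in the range $q \in [2,\infty)$ under Assumption~\ref{a-general} via Gaussian-type heat kernel bounds and associated Riesz transform estimates---combined with consistency of the fractional-power functional calculus across the $L^p$-scale, to conclude that $(\cA_q+1)^{-1/2}\colon W^{-1,q}_D(\Omega) \to L^q(\Omega)$ is an isomorphism.

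Once~(a) is available, the isomorphism $(\cA_q+1)^{1/2}\colon L^q(\Omega) \to W^{-1,q}_D(\Omega)$ intertwines $\cA_q+1$ on $W^{-1,q}_D(\Omega)$ with its part in $L^q(\Omega)$, which is precisely $A_q+1$ (since $\cA_q$ is by construction the part of $\cA$ in $W^{-1,q}_D(\Omega)$, and taking parts is transitive, so the part in $L^q(\Omega)$ of $\cA_q$ coincides with the part in $L^q(\Omega)$ of $\cA_2$, namely $A_q$). Thus $\cA_q+1 = (\cA_q+1)^{1/2}(A_q+1)(\cA_q+1)^{-1/2}$ in the sense of similar operators, and as $-A_q$ generates an analytic semigroup on $L^q(\Omega)$ (already noted in the preliminaries), the transported operator $-\cA_q$ generates one on $W^{-1,q}_D(\Omega)$. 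This proves~(b).

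Part~(c) then follows from the multiplicativity of fractional powers: $u \in \dom((\cA_q+1)^{1/2+s})$ iff $(\cA_q+1)^{s}u \in \dom((\cA_q+1)^{1/2})$, and by~(a) the latter domain equals $L^q(\Omega)$ viewed inside $W^{-1,q}_D(\Omega)$. Under the similarity from~(b), the restriction of $(\cA_q+1)^s$ to $L^q(\Omega)$ coincides with $(A_q+1)^s$, so the condition reduces to $u \in \dom((A_q+1)^s)$. The principal obstacle is the $q>2$ step of~(a): extending the Kato square root from $L^2$ to $L^q$ in a mixed boundary setting is technically the heaviest ingredient and typically requires dedicated harmonic-analytic input (or direct citation of prior work in the same geometric framework), while the remainder of the proposition is essentially bookkeeping built on top of it.
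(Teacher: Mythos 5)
Your proposal is correct and follows essentially the same route as the paper: establish the Kato square root property (citing prior work for the $q>2$ case, just as the paper cites Auscher et al.\ for~\ref{p-propAq2}), then transfer the generator property in~\ref{p-propAq1} via the similarity through $(\cA_q+1)^{\pm 1/2}$, and finally obtain~\ref{p-propAq3} from the law of exponents together with consistency of the fractional powers of $\cA_q$ and $A_q$ on $L^q(\Omega)$. The paper phrases the similarity at the resolvent level and also notes in passing that the bounded $H^\infty$ calculus transfers to $\cA_q$, but both points are minor bookkeeping differences and your argument lands in the same place.
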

\begin{proof}
  In~\cite[Thm.~1.1]{bechtel} it is proved that $A+1$ has the Kato square root property in the
  present geometric setting. (And even beyond that.) Using this fundamental property, the
  claim~\ref{p-propAq2} is one of the main results in~\cite{Auscher}, see Theorem~5.1
  there. Further, since $(\cA_q +1)^{-1}$ and $(A_q +1)^{-1}$ coincide on $L^q(\Omega)$, so do
  the inverse square roots, and we
  have the similarity 
 \[
  (\cA_q +\lambda )^{-1} = (\cA_q +1 )^{1/2} ( A_q +\lambda )^{-1} (\cA_q +1)^{-1/2}.
\]
Hence, we can transfer the  generator
property for an analytic semigroup from $-A_q$ to $-\cA_q$ by means of resolvent estimates, see the
characterization in~\cite[Thm.~II.4.6]{EngelNagel}. (Note that we do not claim the semigroups
generated by $-\cA_q$ to be contractive.) This implies~\ref{p-propAq1}. Finally, the fractional
powers of $\cA_q$ are well defined since the bounded $H^\infty$ calculus also transfers from
$A_q$ to $\cA_q$ by means of the square root~(\cite[Thm.~11.5]{Auscher}). Then,~\ref{p-propAq3}
follows immediately from~\ref{p-propAq2} by sketching
\begin{equation*}
  \bigl(\cA_q + 1\bigr)^{-1/2-s}W^{-1,q}_D(\Omega) = \bigl(\cA_q + 1\bigr)^{-s}L^q(\Omega) = \bigl(A_q + 1\bigr)^{-s}L^q(\Omega).\qedhere
\end{equation*}
\end{proof}
%%%%%%%%%%%%%%%%%%%%%%%%%%%%%%%%%%%%%%%%%%%%%%%%%%%%%%%%%
\section{Embeddings for domains of fractional powers of $\cA_q+1$}\label{sec-Embed}

In this section we show that if the domain of $\cA_q+1$ embeds into a H\"older space, so do
suitable fractional powers of this operator. We remark on the domain of $\cA_q$ after the proof
of Theorem~\ref{t-main1}. The question of \emph{when} the domain of $\cA_q+1$ actually embeds
into a H\"older space will be considered in Section~\ref{s-Hoelderdomai}.
%\subsection{The semigroup approach} \label{ss-semigroup}
%%%%%%%%%%%%%%%%%%%%%%%%%%%%%%%%%%%%%%%%%%%%%%%%%%%%%%%%%%
%We aim to show the following result.
%
\begin{theorem} \label{t-main1}
Let Assumption~\ref{a-general} be satisfied and let
$q>d$. Suppose that $\dom(\cA_q+1) \embeds C^\alpha(\Omega)$ for some $\alpha >
0$. Let $\kappa \in (0,\alpha)$ and $\sigma \in \bigl(\frac12+\frac{d}{2q} + \frac\kappa\alpha(\frac12 - \frac{d}{2q}), 1\bigr)$. Then we have 
\[ \bigl( W^{-1,q}_D(\Omega), \dom(\cA_q+1) \bigr)_{\sigma, 1} \hookrightarrow
  C^{\kappa}(\Omega)  \qquad \text{and} \qquad
 \dom\bigl( (\cA_q + 1)^\sigma \bigr) \embeds C^{\kappa}(\Omega).
\]
\end{theorem}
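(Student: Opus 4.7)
The plan is to derive the H\"older continuity of elements of $\dom(B^\sigma)$ for $B := \cA_q + 1$ directly from the integral representation
\[B^{-\sigma} = \frac{1}{\Gamma(\sigma)} \int_0^\infty t^{\sigma - 1} e^{-t}\, T_t\, dt, \qquad T_t := e^{-t\cA_q},\]
which is available by analyticity of $T_t$ on $X := W^{-1,q}_D(\Omega)$ from \proref{p-propAq}, and then to deduce the real-interpolation statement via the standard inclusion $(X, \dom B)_{\sigma, 1} \embeds \dom(B^\sigma)$ valid for any sectorial operator $B$.

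Two complementary decay estimates of $T_t$ drive the argument. The first is an ultracontractive bound $\|T_t\|_{X \to L^\infty} \le c\, t^{-1/2 - d/(2q)}$ for $t \in (0, 1]$. \remref{r-p2} provides $W^{1,2}_D(\Omega) \embeds L^{2d/(d-2)}(\Omega)$ when $d \ge 3$ (and into every $L^p(\Omega)$ when $d = 2$); by the classical Nash inequality machinery this translates to $\|e^{-tA_q}\|_{L^q \to L^\infty} \le c\, t^{-d/(2q)}$ on $(0,1]$. The Kato-type identification $\dom(B^{1/2}) = L^q(\Omega)$ from \proref{p-propAq}, which implies $\|v\|_{L^q} \sim \|B^{1/2} v\|_X$ on $L^q$, combined with the analyticity bound $\|B^{1/2} T_t\|_{X \to X} \le c\, t^{-1/2}$, gives
\[\|T_t\|_{X \to L^q} \le c\, t^{-1/2}, \qquad t \in (0, 1].\]
Since $A_q$ is the part of $\cA_q$ in $L^q$, the two semigroups coincide on $L^q$, and composing via $T_t = e^{-(t/2)A_q} \circ T_{t/2}$ yields the desired ultracontractive bound. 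The second decay estimate is immediate from analyticity and the hypothesis $\dom B \embeds C^\alpha(\Omega)$:
\[[T_t f]_{C^\alpha(\Omega)} \le c\, \|T_t f\|_{\dom B} \le c\, t^{-1} \|f\|_X, \qquad t \in (0, 1].\]

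The H\"older estimate on $u = B^{-\sigma} f$ then proceeds by writing, for $x \ne y$ in $\Omega$,
\[u(x) - u(y) = \frac{1}{\Gamma(\sigma)} \int_0^\infty t^{\sigma - 1} e^{-t} \bigl(T_t f(x) - T_t f(y)\bigr)\, dt\]
and splitting at the threshold $t^\ast := |x-y|^{\alpha/\gamma}$, where $\gamma := \tfrac12 - \tfrac{d}{2q} > 0$ (this is the place where $q > d$ is used). On $(0, t^\ast]$ the integrand is majorized by $2\|T_t f\|_\infty$ through the ultracontractive bound; on $(t^\ast, 1]$ by $[T_t f]_{C^\alpha} |x-y|^\alpha$ through the seminorm bound; on $(1, \infty)$ the factor $e^{-t}$ together with boundedness of $T_1 \colon X \to C^\alpha(\Omega)$ absorbs the tail. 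A direct computation shows that the two inner contributions produce the same exponent $(\alpha/\gamma)(\sigma - \tfrac12 - \tfrac{d}{2q})$ in $|x-y|$, so the sum is bounded by $c\,|x-y|^\kappa\,\|f\|_X$ precisely when $\sigma > \tfrac12 + \tfrac{d}{2q} + \tfrac{\kappa}{\alpha}(\tfrac12 - \tfrac{d}{2q})$. A simpler one-sided estimate using only the ultracontractive bound handles $\|u\|_\infty$; together these yield $\dom(B^\sigma) \embeds C^\kappa(\Omega)$, and the interpolation embedding follows from the standard sectorial inclusion noted above.

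The main obstacle I foresee is the ultracontractive estimate on the negative Sobolev scale: it relies critically on the Kato square root identification from \proref{p-propAq} to transfer the classical $L^q$-ultracontractivity to $X$. Once this and the $C^\alpha$-seminorm bound are in hand, the integral splitting is an elementary, if somewhat delicate, balancing exercise tuned to reproduce exactly the threshold in the statement.
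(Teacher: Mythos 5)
Your proof is correct and takes a genuinely different route from the paper. The paper's argument is a pure interpolation argument: it writes $\sigma = (1-\theta)(\tfrac12+\tau)+\theta$ with $\theta=\kappa/\alpha$ and $\tau\in(\tfrac{d}{2q},\tfrac12)$, uses the reiteration theorem to express $(W^{-1,q}_D(\Omega),\dom(\cA_q+1))_{\sigma,1}$ as an interpolation space between $(W^{-1,q}_D(\Omega),\dom(\cA_q+1))_{1/2+\tau,1}$ and $\dom(\cA_q+1)$, places the first of these into $L^\infty(\Omega)$ via $\dom((\cA_q+1)^{1/2+\tau})=\dom((A_q+1)^\tau)\embeds L^\infty(\Omega)$ (that is, Proposition~\ref{p-propAq}~\ref{p-propAq3} and Corollary~\ref{c-2}), and then invokes the elementary interpolation Lemma~\ref{lem:interpol-infty-hoelder} for $(L^\infty(\Omega),C^\alpha(\Omega))_{\theta,1}\embeds C^{\alpha\theta}(\Omega)$. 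You instead work directly from the Balakrishnan representation of $(\cA_q+1)^{-\sigma}$ and split the integral at $t^*=|x-y|^{\alpha/\gamma}$, balancing an ultracontractive $W^{-1,q}_D(\Omega)\to L^\infty(\Omega)$ bound on the semigroup (derived from the same Kato identification and $L^q$-ultracontractivity that feed into the paper's Proposition~\ref{p-propAq} and Corollary~\ref{c-2}) against a $C^\alpha$-seminorm bound coming from the hypothesis on $\dom(\cA_q+1)$ and analyticity. The computations check out: the two inner contributions match with exponent $\frac{\alpha}{\gamma}(\sigma-\tfrac12-\tfrac{d}{2q})$, and this exceeds $\kappa$ exactly under the stated hypothesis on $\sigma$. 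Your approach bypasses the reiteration theorem and Lemma~\ref{lem:interpol-infty-hoelder} entirely and is more self-contained, at the cost of the pointwise estimate being a bit more delicate; the paper's argument is shorter and more modular given the abstract interpolation toolkit. Two minor points to polish: (i) for $d=2$ the available ultracontractive exponent is $\gamma/(2q)$ with $\gamma>2$ arbitrary rather than $d/(2q)$, so you should pick $\gamma$ close enough to $2$ and use the strict inequality in the hypothesis on $\sigma$ to absorb the slack, exactly as the paper does in Corollary~\ref{c-2}; (ii) the boundedness of $T_t=e^{-t\cA_q}$ needed for the tail $\int_1^\infty$ deserves a word---it follows because $\sigma(\cA_q)\subseteq[\RRe z\ge 0]$ and the semigroup is analytic, so the growth bound is nonpositive.
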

%
% Our proof of Theorem~\ref{t-main1} bases on the following abstract result on ultracontractivity for semigroups that is proved in~\cite[Chapter~7.3]{Are6}:
% %
% \begin{theorem} \label{l-02}
%  Let $H^1_0(\Omega) \subseteq \VV \subseteq H^1(\Omega)$ be the domain of a closed,
%  $L^2(\Omega)$-elliptic\footnote{Hannes: Hier stand mal 'sectorial'--wieso? (Wir haben das auch
%    nicht eingefuehrt..) Ich habe zudem $H^1_0 \subseteq V$ dazugeschrieben, damit man auch
%    sicher $V\cap L^1$ dicht in $L^1$ hat, wie es bei Arendt gefordert ist.} form and let $B$
%  be the operator induced by this form on $L^2(\Omega)$. Suppose that the semigroup $t \mapsto e^{-tB}$ generated by $-B$ in $L^2(\Omega)$ consistently extends to a bounded $C_0$-semigroup on $L^1(\Omega)$ and that the semigroup which is induced on $L^\infty$ is a bounded one. Finally, assume that $\VV$ embeds into $L^\frac{2\gamma }{\gamma -2}(\Omega)$ for some real number $\gamma > 2$.
%  Then for every $p \in (1,\infty )$
%  \begin{equation} \label{e-semigroupdecay2}
%   \|e^{-tB}  \|_{\mathcal L(L^p;L^\infty)} \le C t^{- \frac {\gamma}{2p}}, \qquad t > 0.
% \end{equation}
% \end{theorem}
% Note that under the assumptions of this theorem, the semigroup $e^{tB}$ extends to a bounded
% $C_0$-semigroup on $L^p(\Omega)$ for all $p \in [1,\infty)$. We will denote its generator by
% $B_p$.

Before we start with the proof, a short remark:

\begin{rem}
  \label{rem:heat-kernels}
  Via Proposition~\ref{p-propAq}, we also obtain from Theorem~\ref{t-main1} that
  \begin{equation*}
    \dom((A_q+1)^\varsigma) \embeds C^\kappa(\Omega)
  \end{equation*}
  for
  $\varsigma \in \bigl(\frac{d}{2q} + \frac\kappa\alpha(\frac12 - \frac{d}{2q}),
  \frac12\bigr)$. This is interesting because there is natural connection between embeddings of
  the domain of a fractional power of $A_q+1$ into a H\"older space and the H\"older continuity
  of the heat kernel associated to the semigroup generated by the negative of $A_q + 1$. We
  refer to~\cite[Ch.~6.2]{Ouh} and leave the details to the interested reader.
\end{rem}

Our proof of Theorem~\ref{t-main1} is based on ultracontractivity of semigroups generated
by $-A_q$. We use ultracontractivity to derive a precise regularizing property for inverse
fractional powers of $A_q+1$ and then in turn transfer this to the $\cA_q$ operator by means of
Proposition~\ref{p-2}.

The semigroups $(e^{-A_pt})$ are said to be \emph{ultracontractive} if there exists a constant $c>0$ and some $\gamma> 2$ such that
\begin{equation}\label{e-semigroupdecay2}
  \bigl\|e^{-A_p t}\bigr\|_{L^p(\Omega)\to L^\infty(\Omega)} \leq c t^{-\frac\gamma{2p}}
  \quad \text{for all } t >0,~p\in [1,\infty).
\end{equation}
In fact, this property is equivalent to $\VV \embeds L^{\frac{2\gamma}{\gamma-2}}(\Omega)$; we
refer to~\cite[Chapter~7.3]{Are6}. But under the geometric assumptions of
Assumption~\ref{a-general}~\ref{a-general-1}, Proposition~\ref{p-2} provides a Sobolev extension
operator from which the foregoing Sobolev embedding for $\VV$ with $\gamma = d$ if $d>2$ and any
$\gamma \in (2,\infty)$ if $d=2$ follows immediately as noted in Remark~\ref{r-p2}. This is
already the proof of the next proposition:

\begin{proposition}[Ultracontractivity]
  \label{p-ultracontract}
  Adopt Assumption~\ref{a-general}~\ref{a-general-1}. Then the semigroups $(e^{-A_pt})$ are
  \emph{ultracontractive}, that is, there exists $c>0$ such that~\eqref{e-semigroupdecay2} holds
  true for $\gamma = d$ if $d>2$ and $\gamma >2$ arbitrary if $d=2$.
\end{proposition}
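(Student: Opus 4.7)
The plan is to invoke the classical equivalence between ultracontractivity of a positivity-preserving, $L^\infty$-contractive semigroup on $L^2$ and a Sobolev-type embedding for the domain of its generating form. Since $(e^{-At})$ is sub-Markovian and substochastic, and its $L^2$-generator $A$ is self-adjoint and induced by the symmetric closed form $\ft$ on $\VV$, the characterization recalled in \cite[Ch.~7.3]{Are6} (going back to Varopoulos, Davies, Nash) asserts that the estimate
\[
  \bigl\|e^{-At}\bigr\|_{L^2(\Omega)\to L^\infty(\Omega)} \le c\, t^{-\gamma/4}, \qquad t\in(0,1],
\]
is equivalent to the embedding $\VV \embeds L^{2\gamma/(\gamma-2)}(\Omega)$. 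Interpolating this bound with the $L^p(\Omega)$-contractivity of $(e^{-A_p t})$ and exploiting the duality between $(e^{-A_1 t})$ and $(e^{-A_\infty t})$ then yields the full estimate \eqref{e-semigroupdecay2} for every $p\in[1,\infty)$, where the short-time bound is extended to all $t>0$ using the contractivity on $L^\infty(\Omega)$.

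It thus remains to verify $\VV = W^{1,2}_D(\Omega) \embeds L^{2\gamma/(\gamma-2)}(\Omega)$ with the stated value of $\gamma$, which has already been observed in \remref{r-p2}. The argument consists in first extending $u \in \VV$ to a function in $W^{1,2}(\R^d)$ by means of \proref{p-2}, and then applying the classical Sobolev embedding on $\R^d$: for $d > 2$ one has $W^{1,2}(\R^d) \embeds L^{2d/(d-2)}(\R^d)$, which corresponds precisely to $\gamma = d$. For $d=2$, the extended function is supported (up to the decay in $W^{1,2}$) in a fixed neighbourhood of $\Omega$, and the classical embedding $W^{1,2}_{\mathrm{loc}}(\R^2) \embeds L^r_{\mathrm{loc}}(\R^2)$ for every finite $r$ gives $\VV \embeds L^r(\Omega)$ for all $r<\infty$; choosing $r = 2\gamma/(\gamma-2)$ covers every $\gamma > 2$.

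There is no real obstacle here, since both ingredients are by now textbook. The only point worth flagging is that the embedding constant in $\VV \embeds L^{2\gamma/(\gamma-2)}(\Omega)$ depends only on the norm of the extension operator from \proref{p-2} and on the Sobolev constant in $\R^d$, and is in particular independent of the coefficient function $\mu$. Coupled with the fact that the Varopoulos-type characterization enters the form $\ft$ only through the ellipticity constant $\kappa_{\mathrm{ell}}$ and the $L^\infty$-bound of $\mu$, this will allow to track the constant $c$ in \eqref{e-semigroupdecay2} uniformly across any fixed ellipticity class, which is exactly the uniformity advertised in the introduction.
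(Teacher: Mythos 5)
Your proposal follows exactly the same route as the paper: both invoke the characterization from \cite[Ch.~7.3]{Are6} that ultracontractivity of the semigroup is equivalent to the Sobolev-type embedding $\VV \embeds L^{2\gamma/(\gamma-2)}(\Omega)$, and both verify that embedding via the extension operator of Proposition~\ref{p-2} together with the classical Sobolev embedding on $\R^d$ (with $\gamma=d$ for $d>2$, and $\gamma>2$ arbitrary for $d=2$), as recorded in Remark~\ref{r-p2}. The interpolation/duality and uniformity remarks you add are correct but are absorbed implicitly by the paper's citation.
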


We infer the following regularizing property for the inverse fractional powers of $A_p+1$
for $p>d/2$ :

\begin{corollary} \label{c-2} Adopt Assumption~\ref{a-general}~\ref{a-general-1} and let
  $p > d/2$. Then, for every $\tau \in (\frac {d}{2p}, 1]$, we find
  $(A_p+1)^{-\tau} \in \mathcal L(L^p(\Omega) \to L^\infty(\Omega))$. In particular,
  $\dom((A_p+1)^\tau) \embeds L^\infty(\Omega)$.
% \[ \|u \|_{L^\infty(\Omega)} \le c\,\bigl\|(A_p+1)^\tau u \bigr\|_{L^p(\Omega)} = c \|u\|_{\dom((A_p +1)^\tau)}, \qquad u \in \dom ( (A_p +1)^\tau ).
% \]
\end{corollary}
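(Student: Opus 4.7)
\medskip

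\noindent\textbf{Proof plan.} The strategy is the classical one: represent the negative fractional power via the Balakrishnan/Laplace-type formula
\begin{equation*}
(A_p+1)^{-\tau} f \;=\; \frac{1}{\Gamma(\tau)}\int_0^\infty t^{\tau-1}\, e^{-t}\, e^{-A_p t} f \,\dd t,
\end{equation*}
which is valid since $-A_p$ generates a bounded analytic $C_0$-semigroup on $L^p(\Omega)$ and $1 \in \rho(-A_p)$ (in fact $\sigma(A_p)\subseteq [\RRe z \geq 0]$). The plan is to take the $L^\infty(\Omega)$ norm inside the integral and bound $\|e^{-A_p t}f\|_{L^\infty(\Omega)}$ using Proposition~\ref{p-ultracontract} on $(0,1]$ and $L^\infty$-contractivity on $[1,\infty)$.

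For the small-time piece, ultracontractivity gives directly
\begin{equation*}
  \bigl\|e^{-A_p t}f\bigr\|_{L^\infty(\Omega)} \le c\,t^{-\gamma/(2p)}\,\|f\|_{L^p(\Omega)}, \qquad t\in(0,1],
\end{equation*}
with $\gamma=d$ if $d>2$ and $\gamma\in(2,\infty)$ arbitrary if $d=2$. For the large-time piece, since $(e^{-A_p t})$ is $L^\infty(\Omega)$-contractive (sub-Markovianity) I would write $e^{-A_p t} = e^{-A_p(t-1)} e^{-A_p}$ for $t\geq 1$ and combine ultracontractivity at time $1$ with $L^\infty$-contractivity to obtain $\|e^{-A_p t}f\|_{L^\infty(\Omega)}\le c\|f\|_{L^p(\Omega)}$ uniformly for $t\geq 1$; the factor $e^{-t}$ in the integrand then renders the tail harmless.

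Putting these bounds together, I get
\begin{equation*}
  \bigl\|(A_p+1)^{-\tau}f\bigr\|_{L^\infty(\Omega)} \;\le\; \frac{c}{\Gamma(\tau)}\,\|f\|_{L^p(\Omega)}\left(\int_0^1 t^{\tau-1-\gamma/(2p)}\,\dd t + \int_1^\infty t^{\tau-1}e^{-t}\,\dd t\right).
\end{equation*}
The second integral is finite for any $\tau>0$, so the only real constraint is the convergence at $0$, which requires $\tau > \gamma/(2p)$. For $d>2$ this is exactly the hypothesis $\tau>d/(2p)$. For $d=2$, the assumption $\tau>d/(2p)=1/p$ lets me pick $\gamma\in(2,2p\tau)$, which exists precisely because $\tau>1/p$, so the condition is again met. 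This proves $(A_p+1)^{-\tau}\in \mathcal L(L^p(\Omega),L^\infty(\Omega))$, and since $\dom((A_p+1)^\tau) = (A_p+1)^{-\tau}(L^p(\Omega))$ with an equivalent graph norm, the embedding $\dom((A_p+1)^\tau)\embeds L^\infty(\Omega)$ follows.

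The only delicate step is the borderline $d=2$ case where $\gamma$ must be chosen strictly greater than $2$ but close enough to $2$ that $\gamma/(2p)<\tau$; this uses the strict inequality in $\tau>d/(2p)$ in an essential way.
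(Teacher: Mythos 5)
Your proposal is correct and uses essentially the same approach as the paper: the Balakrishnan formula combined with the ultracontractivity bound of Proposition~\ref{p-ultracontract}, with the same squeeze of $\gamma$ between $d$ and $2p\tau$ in the case $d=2$. The only (minor) difference is that you split the integral at $t=1$ and use $L^\infty$-contractivity for the tail, whereas the bound~\eqref{e-semigroupdecay2} already holds for all $t>0$, so the factor $e^{-t}$ alone settles the tail without a split; this is purely stylistic.
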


% \begin{corollary} \label{c-1}
% In the situation of Theorem~\ref{l-02}, let $p \in (\frac \gamma 2, \infty)$.\footnote{Hier stand $(1, \infty)$, aber das passt nicht zur Wahl von $\tau$.} Then for every $\tau \in (\frac {\gamma}{2p}, 1]$ one has $(B+1)^{-\tau} \in \mathcal L(L^p;L^\infty)$ and for all $u \in \dom ( (B_p +1)^\tau )$
% \[ \|u \|_{L^\infty} \le c \|(B+1)^\tau u \|_{L^p} = c \|u\|_{\dom((B_p +1)^\tau)}.
% \]
% \end{corollary}
\begin{proof}
Consider the well-known Balakrishnan formula
\[
(A_p+1)^{-\tau} = \frac {1}{\Gamma (\tau)} \int_0^\infty t^{\tau -1} e^{-A_pt}e^{-t} \,\mathrm{d}t.
\]
From Proposition~\ref{p-ultracontract} and the growth bound~\eqref{e-semigroupdecay2} for
$(e^{-A_pt})$, one observes immediately that the condition $\tau > \frac {d}{2p}$ is sufficient to
have the integral converge in $\mathcal L(L^p(\Omega)\to L^\infty(\Omega))$ and the claim follows. (For $d=2$, squeeze
$\frac\gamma{2p}$ between $\frac{d}{2p}$ and $\tau$ by picking $\gamma$ close enough to $d=2$.)
\end{proof}

As a last auxiliary result of potentially independent interest, we note the following remarkably simple
embedding which holds true for any bounded open set without further assumptions on its geometry:
\begin{lemma}
  \label{lem:interpol-infty-hoelder}
  Let $\alpha > 0$. Then $(L^\infty(\Omega),C^\alpha(\Omega))_{\theta,1} \embeds
  C^{\alpha\theta}(\Omega)$ for any $\theta \in (0,1)$.
\end{lemma}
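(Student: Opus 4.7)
The plan is to argue via the Peetre K-functional characterization of real interpolation. Recall that for the couple $(L^\infty(\Omega), C^\alpha(\Omega))$ the K-functional is
$K(t,f) = \inf\{\|f_0\|_{L^\infty(\Omega)} + t\|f_1\|_{C^\alpha(\Omega)} : f = f_0 + f_1\}$ for $t > 0$, and the norm on $(L^\infty(\Omega), C^\alpha(\Omega))_{\theta,1}$ is $\|f\|_{\theta,1} = \int_0^\infty t^{-\theta} K(t,f)\, dt/t$. Both component spaces sit inside the space of bounded measurable functions on $\Omega$, so the interpolation space does as well and pointwise manipulations are legal via the continuous inclusion $(L^\infty(\Omega), C^\alpha(\Omega))_{\theta,1} \embeds L^\infty(\Omega) + C^\alpha(\Omega) = L^\infty(\Omega)$.

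The first key step is a pointwise bound on differences. For any admissible decomposition $f = f_0 + f_1$ and any $x,y \in \Omega$, the triangle inequality gives $|f(x) - f(y)| \le |f_0(x) - f_0(y)| + |f_1(x) - f_1(y)| \le 2\|f_0\|_{L^\infty(\Omega)} + \|f_1\|_{C^\alpha(\Omega)} |x-y|^\alpha$. Specializing the parameter to $t = |x-y|^\alpha$ and passing to the infimum over all admissible decompositions yields the core estimate $|f(x) - f(y)| \le 2\, K(|x-y|^\alpha, f)$. A similar (even simpler) argument applied at a single point produces $\|f\|_{L^\infty(\Omega)} \le K(1,f)$.

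The second step is the standard pointwise bound $K(t,f) \le C_\theta\, t^\theta \|f\|_{\theta,1}$, which amounts to the elementary embedding $(\,\cdot\,)_{\theta,1} \embeds (\,\cdot\,)_{\theta,\infty}$. It follows in one line from monotonicity of $t \mapsto K(t,f)$: since $K(s,f) \ge K(t,f)$ for $s \ge t$, one has $K(t,f) \int_t^{2t} s^{-\theta-1}\, ds \le \int_0^\infty s^{-\theta-1} K(s,f)\, ds = \|f\|_{\theta,1}$, and evaluating the elementary integral yields the claim with $C_\theta = \theta/(1 - 2^{-\theta})$. Feeding $t = |x-y|^\alpha$ into the bound from the first step gives $|f(x) - f(y)| \le 2 C_\theta |x-y|^{\alpha\theta} \|f\|_{\theta,1}$, while $t = 1$ produces $\|f\|_{L^\infty(\Omega)} \le C_\theta \|f\|_{\theta,1}$; together these are exactly the $C^{\alpha\theta}(\Omega)$ bound to be shown.

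I do not anticipate any real obstacle; the argument is textbook and uses nothing about $\Omega$ beyond the fact that $L^\infty$ and $C^\alpha$ are defined on the same set. The one bookkeeping point worth noting is that one must realize the abstract interpolation space as genuine functions on $\Omega$ to make pointwise evaluation meaningful, but this is automatic from the inclusion into $L^\infty(\Omega)$ mentioned above.
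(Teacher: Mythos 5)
Your proof is correct, but it takes a genuinely different route from the paper's. The paper verifies the multiplicative estimate
$\|u\|_{C^{\alpha\theta}(\Omega)} \le 3\,\|u\|_{L^\infty(\Omega)}^{1-\theta}\,\|u\|_{C^\alpha(\Omega)}^\theta$
for $u \in C^\alpha(\Omega)$, and then invokes Triebel's abstract criterion \cite[Lem.~1.10.1]{Tri} that a space satisfying such a product inequality on $X \cap Y$ is of class $J(\theta)$, from which $(X,Y)_{\theta,1}\embeds Z$ follows. This is the $J$-method side of the theory: you only ever manipulate functions in the intersection $C^\alpha(\Omega)$ and then lean on a black-box result. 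Your argument works instead directly with the Peetre $K$-functional: you show the pointwise bound $|f(x)-f(y)|\le 2K(|x-y|^\alpha,f)$ by optimizing over decompositions $f=f_0+f_1$, and you combine it with the elementary inclusion $(\cdot)_{\theta,1}\embeds(\cdot)_{\theta,\infty}$, which you also reprove on the spot via monotonicity of $K$. What this buys you is a fully self-contained proof with an explicit constant $2C_\theta$, at the cost of a slightly longer write-up; what the paper's route buys is brevity at the price of citing Triebel. Both are standard and neither is superior; the choice of $t=|x-y|^\alpha$ as the interpolation parameter is the key insight in your version and plays the same role as the split $|u(x)-u(y)| = |u(x)-u(y)|^{1-\theta}|u(x)-u(y)|^\theta$ in theirs.
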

\begin{proof}
  Let
  $u \in C^\alpha(\Omega)$ and estimate
  \begin{align*}
    \sup_{\substack{\x, \y \in \Omega\\\x\neq \y}}
    \frac{|u(\x) - u(\y)|}{|\x -\y |^{\alpha \theta }} &\le
    \sup_{\substack{\x, \y \in \Omega\\\x\neq \y}} |u(\x) - u(\y)|^{1-\theta} \sup_{\substack{\x, \y \in \Omega\\\x\neq \y}} \frac{|u(\x) - u(\y)|^\theta}{|\x -\y |^{\alpha \theta}} \\ 
    &\le \bigl (2 \|u\|_{L^\infty(\Omega)})^{1-\theta} \|u \|_{C^\alpha(\Omega)}^\theta.
  \end{align*}
  Together with an obvious estimate for $\sup_{\x \in \Omega} |u(\x)|$ one gets, for every
  $u \in C^\alpha(\Omega)$,
  \[ \|u \|_{C^{\alpha \theta}(\Omega)} \le 3 \|u \|^{1-\theta}_{L^\infty(\Omega)} \|u
    \|^{\theta}_{C^\alpha(\Omega)}.
  \]
  Thus, referring to~\cite[Lem.~1.10.1]{Tri}, $C^{\alpha\theta}(\Omega)$ is of class
  $J(\theta)$ with respect to $L^\infty(\Omega)$ and $C^\alpha(\Omega)$ from which we obtain the
  desired embedding.
  % \begin{equation*}%\label{e-12}
  %   \bigl( L^\infty(\Omega), C^\alpha(\Omega) \bigr)_{\theta,1} \hookrightarrow C^{\alpha \theta}(\Omega).\qedhere
  % \end{equation*}
\end{proof}
% We apply this result in our situation.\footnote{Hier ist noch zu kl\"aren, warum wir mit unseren Operatoren in der Situation von Arendt sind. Teilweise 2.1 und 2.4, aber was ist z.B. mit $p < 2$?}
% \begin{corollary} \label{c-2}
% Let Assumption~\ref{a-general}\ref{a-general-1} be fulfilled and let $p > d$.
% Then, for every $\tau \in \bigl( \frac {d}{2p}, 1 \bigr]$ one has $(A+1)^{-\tau} \in \mathcal L(L^p;L^\infty)$ and for all $u \in \dom \bigl( (A_p+1)^\tau \bigr)$
% \[ \|u \|_{L^\infty} \le C \|(A+1)^\tau u \|_{L^p} \le C \|u\|_{\dom((A_p+1)^\tau)}.
% \]
% \end{corollary}
% \begin{proof}
% In case of $d>2$, thanks to Remark~\ref{r-p2}, one may take $\gamma =d$ in Corollary~\ref{c-1}. If $d = 2$ the same lemma gives the embedding $\VV = W^{1,2}_D(\Omega) \hookrightarrow L^r(\Omega)$ for every finite $r$. Thus $\gamma $ may be taken arbitrarily close to $2$ and the assertion also follows from Corollary~\ref{c-1}.
% \end{proof}
%
\begin{proof}[Proof of Theorem~\ref{t-main1}]
  Set $\theta \coloneqq \kappa/\alpha\in (0,1)$ and
  $\sigma \in \bigl(\frac12+\frac{d}{2q} + \theta(\frac12 - \frac{d}{2q}), 1\bigr)$ as in the
  theorem. A short computation shows that we can write
  $\sigma = (1-\theta)(\frac12 + \tau) + \theta$ with some $\tau \in
  (\frac{d}{2q},\frac12)$. Thus, the reiteration theorem~(\cite[Thm.~1.10.2]{Tri}) implies that
 \begin{equation*}
   \bigl( W_D^{-1,q}(\Omega), \dom(\cA_q+1) \bigr)_{\sigma, 1}= \Bigl( \bigl(
   W_D^{-1,q}(\Omega), \dom(\cA_q+1) \bigr)_{\frac 12 + \tau,1}, \dom(\cA_q+1) \Bigr)_{\theta,
     1}. 
 \end{equation*}
 We show that the first space on the right embeds continuously into $L^\infty(\Omega)$. Indeed,
 by interpolation for fractional power domains of so-called positive operators as
 in~\cite[Thm.~1.15.2]{Tri}, we have
 \begin{equation*}
   \bigl( W_D^{-1,q}(\Omega), \dom(\cA_q+1) \bigr)_{\frac 12 + \tau,1} \embeds \dom((\cA_q+1)^{1/2+\tau}).
 \end{equation*}
 % Thus,
 % \begin{equation*}
 %    \bigl( W_D^{-1,q}(\Omega), \dom(\cA_q+1) \bigr)_{\sigma, 1} \embeds \Bigl( \dom\bigl((\cA_q+1)^{1/2 +\tau}\bigr), \dom(\cA_q+1) \Bigr)_{\theta,1}.
 % \end{equation*}
 But for $\tau \in (\frac{d}{2q},\frac12)$, by combining
 Proposition~\ref{p-propAq}~\ref{p-propAq3}---this is the point where we need
 Assumption~\ref{a-general}~\ref{a-general-2}---and Corollary~\ref{c-2}, we find
  \[ \dom \bigl( (\cA_q+1)^{1/2 +\tau} \bigr) = \dom \bigl( (A_q+1)^{\tau} \bigr)
    \hookrightarrow L^\infty(\Omega).
  \]
  By assumption, the restriction of the foregoing embedding to $\dom(\cA_q+1)$ is precisely
  $\dom(\cA_q+1) \hookrightarrow C^\alpha(\Omega)$. Interpolating these and using
  Lemma~\ref{lem:interpol-infty-hoelder}, we find
  \[ \bigl( W_D^{-1,q}(\Omega), \dom(\cA_q+1) \bigr)_{\sigma, 1} \hookrightarrow \bigl(
    L^\infty(\Omega), C^\alpha(\Omega) \bigr)_{\theta,1} \embeds C^{\alpha \theta}(\Omega)
  \]
  and this was the claim, since $\alpha\theta = \kappa$.

  Now the embedding for $\dom((\cA_q + 1)^\sigma)$ itself follows easily by squeezing $s$ between
  $\frac12 + \frac{d}{2q} + \frac\kappa\alpha(\frac12-\frac{d}{2q})$ and $\sigma$ and using the
  previous part via~\cite[Thms.~1.3.3 and~1.15.2]{Tri}:
  \begin{align*}    
 \dom ((\cA_q+1)^{\sigma}) & \embeds \bigl( W^{-1,q}_D(\Omega), \dom(\cA_q+1)
 \bigr)_{\sigma,\infty}\\ & \hookrightarrow \bigl(
 W^{-1,q}_D(\Omega), \dom(\cA_q+1) \bigr)_{s,1} \hookrightarrow C^\kappa(\Omega). \qedhere
\end{align*}
\end{proof}

\subsection*{The domain of $\cA_q$} In the above proof, we have worked only with $\cA_q + 1$ to have an invertible operator at hand
which is much more convenient. However, the \emph{sets} $\dom(\cA_q)$ and $\dom(\cA_q+1)$ are
always the same, and if $\cA$ is continuously invertible, then so is $\cA_q$ and it follows that $\dom(\cA_q)$
and $\dom(\cA_q+1)$ are also equivalent as Banach spaces, each equipped with the respective
graph norm. This transfers to the domains of their fractional powers as well.

By the Lax-Milgram lemma, the operator $\cA$ in turn is continuously invertible whenever we have
a Poincar{\'e} inequality for $V$ at hand. For the latter it is enough to establish that nonzero
constant functions do not belong to $V$. Within our geometric setup of
Assumption~\ref{a-general}, this is already guaranteed by either
$D \cap \overline N \neq \emptyset$, so the Dirichlet- and Neumann boundary parts share a common
interface, or by $D$ containing at least one (relatively) inner point. See for
instance~\cite[Lemma~7.3]{hardy}. (In fact, in the former case it is already enough to have
Lipschitz charts for all points in the relative boundary $\partial D$ within $\partial\Omega$ at
hand; cf.~\cite[Sect.~6]{CMR}.)

In this sense, the statement for $\cA_q+1$ in Theorem~\ref{t-main1} can be immediately
transferred to $\cA_q$ whenever the geometry assumptions admit a Poincar\'e inequality for $V$.

\section{H\"older properties for  $\dom(\cA_q+1)$} \label{s-Hoelderdomai}
In the main result of Section~\ref{sec-Embed} the embedding of $\dom(\cA_q+1)$ into some
H\"older space was a given. We now turn to the question when such an embedding is true. A very
general answer was given in \cite[Theorem~1.1]{ER}, where the result in Theorem~\ref{t-result}
below was proved for all space dimensions $d$. This proof is extremely involved, the natural
instruments being Sobolev-Campanato spaces and De~Giorgi estimates.

However, for dimensions up to $4$ one can avoid this machinery and base the arguments only on
the classical Ladyshenskaya result on H\"older continuity for solutions of the pure Dirichlet
problem, see Proposition~\ref{p-lady} below, and some more elementary yet intricate technical
means. This is what we will carry out here. It will be a welcome byproduct of the present
approach that we easily obtain a uniform result with respect to the given geometry and the
$L^\infty(\Omega)$-bound and ellipticity constant of the coefficient function $\mu$.

In order to formulate our main result of this section, we introduce two more 
geometric conditions; the first one relies on the rather classical notion with a twist of saying
that an open subset $\Lambda$ of $\R^d$ is \emph{of class $(A_\gamma)$} (at $\Upsilon \subseteq
\partial\Lambda$) with a constant $\gamma > 0$, if
\begin{equation*}
  \lambda_d \bigl( B_r(\x) \setminus \Lambda \bigr) \ge \gamma \lambda_d \bigl( B_r(\x)
  \bigr) \qquad \text{for all}~\x\in\Upsilon,~r \in (0,1].\tag{$A_\gamma$}
\end{equation*}
Of course, necessarily $\gamma < 1$. This condition prevents inwards cusps of $\Lambda$ at $\Upsilon$. If $\Upsilon =
\partial\Lambda$, we just refer to $\Lambda$ being of class $(A_\gamma)$. 
The second condition, rather intriguing, concerns the
interface between the Dirichlet boundary part $D$ and the Neumann boundary part
$N = \partial \Omega \setminus D$ in the boundary of $\Omega$:
\begin{assu}\label{a-Interface} We consider the following further geometric assumptions for $\Omega$ and $D$:
 \begin{enumerate}[(a)]
  \item\label{a-Interface:i} There is some $\gamma \in (0,1)$ such that $\Omega$ is of class
    $(A_\gamma)$ at $D$. 
\item\label{a-Interface:ii} Using the notation of Assumption~\ref{a-general}~\ref{a-general-1},
  there are two constants $c_0 \in (0,1)$ and $c_1 > 0$ such that for any point
  $\x \in E \coloneqq D \cap \overline{N}$, every $ y \in \R^{d-1}$ such that
  $( \y,0) \in \phi_\x (E \cap V_\x)$ and every $s \in (0,1]$ it holds
%  \[
%   \lambda_{d-1} \bigl( \bigl\{ \tilde{\z} \in \lowdim{B}_s(\tilde{\y}) \colon \dist \bigl( \tilde{\z}, \phi_\x ( N \cap V_\x )\bigr) > c_0 s \bigr\} \bigr) \ge c_1 s^{d-1}.
% \]
 \[
  \lambda_{d-1} \Bigl( \bigl\{ {\z} \in  \lowdim{B}_s(\y) \colon \dist \bigl( \z, \phi_\x ( N \cap V_\x )\bigr) > c_0 s \bigr\} \Bigr) \ge c_1 s^{d-1}. 
\]
Here and in the sequel, $\lowdim{B}_r({\y})$ denotes the open ball of radius $r$ in $\R^{d-1}$
with its center at 
${\y} \in \R^{d-1}$, and in the distance function we tacitly consider $\phi_x(N \cap V_\x) \subset [z_d =
0]$ as a subset of $\R^{d-1}$ in the obvious manner.
  \end{enumerate}
\end{assu}
%
% \begin{rem}
%  Note that in the case where $D \cap \overline{N} \neq \emptyset$,
%  Assumption~\ref{a-Interface}~\ref{a-Interface:ii} already implies that $D \cap \bigl(
%  \bigcup_{\x \in N} V_{\x} \bigr)$ is not negligible  with respect to the boundary measure on
%  $\partial \Omega$. Thus the constant functions do not belong to $\VV=W^{1,2}_D(\Omega)$ in this
%  case and Assumption~\ref{a-general}~\ref{a-general-3} is satisfied. We nevertheless continue to
%  notice whenever the condition in Assumption~\ref{a-general}~\ref{a-general-3} is needed, to
%  cover also the cases where the Dirichlet and the Neumann part of the boundary do not meet. 
% \end{rem}

We can now formulate the main theorem of this section.
\begin{theorem} \label{t-result} Suppose that $\Omega$ and $D$ satisfy
  Assumption~\ref{a-general}~\ref{a-general-1} and Assumption~\ref{a-Interface}, and let $q>d$
  with $d \in \{2,3,4\}$. If $d=4$, suppose also that Assumption~\ref{a-general}~\ref{a-general-2} is
  satisfied. Then there is an $\alpha > 0$ such that for every $f \in W^{-1,q}_D(\Omega)$ the
  equation
 \begin{equation} \label{e-euq}
  (\cA_q+1) v = f
 \end{equation}
 has a unique solution $v \in W^{1,2}_D(\Omega)$ that belongs to the H\"older space
 $C^\alpha(\Omega)$. Moreover, the mapping
 $W^{-1,q}_D(\Omega) \ni f \mapsto v \in C^\alpha(\Omega)$ is continuous and its norm depends
 only on the geometry of $\Omega$ and the $L^\infty(\Lambda)$-bound and ellipticity constant of
 $\mu$.
\end{theorem}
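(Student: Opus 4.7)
The plan is to first dispatch existence and uniqueness of $v \in W^{1,2}_D(\Omega)$ by the Lax-Milgram lemma: the form $\ft(\cdot,\cdot) + (\cdot,\cdot)_{L^2(\Omega)}$ on $\VV = W^{1,2}_D(\Omega)$ is coercive with constant $\min(1,\kappa_{\mathrm{ell}})$ regardless of any Poincar\'e inequality, and $W^{-1,q}_D(\Omega) \embeds \VV^*$ since $q > d \geq 2$. The resulting $W^{1,2}_D(\Omega)$-estimate depends only on $\kappa_{\mathrm{ell}}$ and $\|\mu\|_{L^\infty(\Omega)}$. The substance lies in the $C^\alpha$-bound, and for that I would localize $v$ through a finite open cover of $\overline\Omega$ with subordinate partition of unity, reduce each localized piece---via the bi-Lipschitz charts of Assumption~\ref{a-general}~\ref{a-general-1} and suitable reflections---to a pure Dirichlet problem on a domain whose Dirichlet set satisfies an $(A_\gamma)$-type volume condition, and then apply the classical H\"older regularity results of Ladyzhenskaya or Kinderlehrer. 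Tracking the constants through this pipeline is expected to yield the stated uniform dependence on the geometry, $\|\mu\|_{L^\infty(\Omega)}$ and $\kappa_{\mathrm{ell}}$.

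The cover would be of four types: (i) balls relatively compact in $\Omega$; (ii) charts $V_\x$ for $\x \in \overline N \setminus E$; (iii) charts around points $\x \in D \setminus \overline N$; and (iv) charts $V_\x$ for interface points $\x \in E \coloneqq D \cap \overline N$. For any subordinate cutoff $\eta$, the function $\eta v$ satisfies the analogous elliptic equation on the local piece, with right-hand side of schematic form $\eta f$ plus commutator terms involving $\nabla\eta$, $v$ and $\nabla v$. The dimension restriction $d\leq 4$ enters here: the commutators must lie in $W^{-1,q}$ of the local domain with $q > d$, which rests on the Sobolev embedding $\VV \embeds L^{2d/(d-2)}$ for $d\geq 3$ (any $L^p$ with $p<\infty$ for $d=2$) afforded by Proposition~\ref{p-2}, combined with duality, and closes up exactly for $d\leq 4$. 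In case (i), the standard interior Ladyzhenskaya H\"older estimate applies. In case (ii), I would pull back through $\phi_\x$ and reflect evenly across $\{x_d = 0\}$: the homogeneous Neumann condition is preserved under even reflection, so the result is a pure Dirichlet problem on the full cube $(-1,1)^d$, handled by Ladyzhenskaya. In case (iii), the pullback alone yields a pure Dirichlet problem on the half-cube whose Dirichlet boundary inherits an $(A_\gamma)$ condition from Assumption~\ref{a-Interface}~\ref{a-Interface:i}, so Kinderlehrer's theorem applies.

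The hard part is case (iv). Pulling back through $\phi_\x$ splits $\{x_d = 0\} \cap (-1,1)^d$ into the Dirichlet image $\phi_\x(D\cap V_\x)$ and the Neumann image $\phi_\x(N\cap V_\x)$; reflecting evenly across $\{x_d = 0\}$ then turns the Neumann portion into an interior face, producing an elliptic equation on the full cube with Dirichlet condition on $\partial(-1,1)^d$ together with $\phi_\x(D\cap V_\x)$ and its reflection sitting as a $(d-1)$-dimensional Dirichlet set inside the cube. For Kinderlehrer to apply, this combined Dirichlet set must satisfy an $(A_\gamma)$-type volume-density condition uniformly in $s \in (0,1]$ at each of its points. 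At points in the interior of $\phi_\x(D\cap V_\x)$ away from the transformed interface $\phi_\x(E\cap V_\x)$, the condition is automatic by local separation from $\phi_\x(N\cap V_\x)$. The delicate points lie on the transformed interface itself, and this is precisely where Assumption~\ref{a-Interface}~\ref{a-Interface:ii} comes in: it provides, in every $(d-1)$-ball of radius $s$ centred at such a point, a subset of $(d-1)$-measure at least $c_1 s^{d-1}$ lying at distance $\geq c_0 s$ from $\phi_\x(N \cap V_\x)$; thickening this set by an interval of length proportional to $s$ in the $x_d$-direction produces a $d$-dimensional set of volume of order $s^d$ which lies in the cube and is strictly separated from the combined Dirichlet set, giving exactly the required $(A_\gamma)$ lower bound. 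Once all four cases are in place with uniform constants, the local H\"older estimates reassemble via the partition of unity to the global $C^\alpha(\Omega)$-bound, with the continuity constant depending only on the stated data.
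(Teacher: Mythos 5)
Your overall skeleton---Lax--Milgram for existence/uniqueness, finite cover of $\overline\Omega$ with a subordinate partition of unity, pullback by the bi-Lipschitz charts of Assumption~\ref{a-general}~\ref{a-general-1}, even reflection across $[x_d=0]$, and finally Proposition~\ref{p-lady}-type results---matches the paper. The treatment of the interior patch, the pure Dirichlet part, and the pure Neumann charts is sound (the paper in fact combines your cases (i) and (iii) into one patch $U_0$ avoiding $\overline N$, since no charts are available or needed at $D\setminus\overline N$; one only has the $(A_\gamma)$ condition there, which is enough via Proposition~\ref{p-lady}).

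Your case (iv), however, has a genuine gap, and it is precisely where the paper's main new idea lives. If you pull back by $\phi_\x$ and then reflect immediately, the reflected domain is the full cube $Q$ minus the $(d-1)$-dimensional set $\phi_\x(D\cap V_\x)\subset[x_d=0]$, i.e.\ a \emph{slit domain}. At a relatively interior point $\x$ of that slit, $B_r(\x)\setminus\Lambda$ has Lebesgue measure zero for small $r$, because the slit has $d$-dimensional measure zero and $B_r(\x)\subset Q$; so the $(A_\gamma)$ condition \emph{cannot} hold, and Proposition~\ref{p-lady} does not apply. Your proposed fix---thickening the good $(d-1)$-set supplied by Assumption~\ref{a-Interface}~\ref{a-Interface:ii} in the $x_d$-direction---produces a set that lies \emph{inside} $\Lambda$ (it is in $Q$ and separated from the slit), whereas $(A_\gamma)$ requires a lower bound on $\lambda_d(B_r(\x)\setminus\Lambda)$, the measure of the \emph{complement}. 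The argument as written conflates these two and does not close.

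The paper's remedy is an additional bi-Lipschitz, volume-preserving shear $\psi_t(\bar\x,x_d)=(\bar\x,\,x_d-t\,\dist(\bar\x,P\setminus\Sigma_j))$ applied to the half-cube \emph{before} reflecting. This pushes every Dirichlet point with positive distance to the Neumann image strictly below $[x_d=0]$, so that after reflection the transformed Dirichlet set sits on the genuine topological boundary of the new domain $\Lambda$ and carves out a wedge-shaped region in its complement. Only then does Assumption~\ref{a-Interface}~\ref{a-Interface:ii} (in the strengthened form of Lemma~\ref{l-5.7}) yield the $(A_\gamma)$ bound, via the Cavalieri-style slicing argument in Lemma~\ref{l-afat}, and Proposition~\ref{p-lady} applies. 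Without some such ``bending'' step, the interface case remains out of reach.
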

\begin{rem} \label{r-lp/2} We comment on Theorem~\ref{t-result}.
  \begin{enumerate}[(a)]
  \item It is well known that, in general, the condition $q > d$ is already necessary for the
    boundedness of the solution, see~\cite[Ch.~I.2]{lady}.
  \item It is easily seen that if $f \in L^p(\Omega)$ with $p>d/2$, then also
    $f \in W^{-1,q}_D(\Omega)$ where $q = 2p > d$ with continuous embedding thanks to
    Remark~\ref{r-p2}. In this sense, Theorem~\ref{t-result} is also a result on H\"older
    regularity for the operators $A_p + 1$ for $p>d/2$. (Note that so far we had only seen that
    the $L^p(\Omega)$-solution to~\eqref{e-euq} is in $L^\infty(\Omega)$ via ultracontractivity
    as in Corollary~\ref{c-2}---but this was already true for a fractional power of $A_p+1$ and
    so some opportunity for improvement for $A_p+1$ itself was expected.)
  \end{enumerate}
\end{rem}
%\begin{rem} \label{r-proper}
%The suppositions on $\Gamma$, which plays the role of the 'Neumann boundary part' 
%(see Remark \ref{r-randbed} above) are motivated as follows: If $\Gamma = \emptyset$, then
%the whole setting leads to the pure Dirichlet problem for which the H\"older continuity is
%known since long, see \cite[Ch.~II Thm.B.4]{kind}. If $\Gamma =\partial \Omega$, then, under
%our assumptions made below, one is confronted with the pure Neumann problem on a Lipschitz
%domain. In this case, the H\"older continuity for the solution is proved in \cite{h/m/r/s}.
%\textbf{ ggf. anderes Zitat, Hoeppner}
%Thus, we really restrict us here to mixed boundary conditions.
%\end{rem}
%
%
%
%
\label{p-Ablaufplan}Let us sketch an outline for the proof of Theorem~\ref{t-result}. We will
rely on the classical 
techniques of localization, transformation and reflection to tackle~\eqref{e-euq} in the form of
a finite number of similar problems on model sets with a very particular geometry. For these we
will rely on classical H\"older regularity results of Ladyzhenskaja or Kinderlehrer which base
on variants of Assumption~\ref{a-Interface}~\ref{a-general-1} . The treatment of local problems
at the pure Dirichlet part $D \setminus \overline N$ will be quite immediate due to
Assumption~\ref{a-Interface}~\ref{a-Interface:i}, and we will also be able to transfer the
Neumann boundary part $N = \partial\Omega\setminus D$ to the pure Dirichlet situation via
Assumption~\ref{a-general}~\ref{a-general-1} and reflection techniques. Of course, the most
interesting part will be the interface $D \cap \overline N$ with
Assumption~\ref{a-Interface}~\ref{a-Interface:ii}. The intriguing idea here is that
Assumption~\ref{a-Interface}~\ref{a-Interface:ii} will allow to transform the localized problem
once more in a particular way such that the resulting set will in fact be amendable by
Assumption~\ref{a-Interface}~\ref{a-Interface:i}.

%%%%%%%%%%%%%%%%%%%%%%%%%%%%%%%%%%%%%%%%%%%%%%%%%%%%%%%%%%%%%%%%%%%%%%%%%%%%%
%
%
%
%
\subsection{Localization and transformation techniques}\label{sec:local-transf-techn}
%
%
%
%
%%%%%%%%%%%%%%%%%%%%%%%%%%%%%%%%%%%%%%%%%%%%%%%%%%%%%%%%%%%%%%%%%%%%%%%%%%%%%%
%
%
%
%
%\noindent
In this subsection we recall, for the reader's convenience, some technical results on
localization and transformation techniques for~\eqref{e-euq} which are needed later on. For all
the following considerations the coefficient function $\mu$ is considered as in
Section~\ref{ss-prelim}; in particular it is elliptic with constant $\kappa_{\text{ell}}$.

We start by quoting a classical theorem
(see~\cite[Ch.~II Appendix~B/C]{kind}) on the H\"older continuity for the
solution of the Dirichlet problem. The result is formulated for a generic bounded domain $\Lambda \subset
\R^d$ since we will use it for several local model sets in the proof of Theorem~\ref{t-result};
the definitions of $\mu$ and $\cA$ are to be understood \emph{mutatis mutandis}.
\begin{proposition} \label{p-lady}
 Let $\Lambda \subset \R^d$ be a bounded domain and let $v \in W^{1,2}_0(\Lambda)$ be the solution of
 \begin{equation} \label{e-gleichung}
  \cA v = f_0 + \sum_{j=1}^d
        \frac{\partial f_j}{\partial x_j},
 \end{equation}
 where $f_0, f_1, \ldots, f_d \in L^q(\Lambda)$ with $q>d$ and $\frac{\partial}{\partial
   x_j}$ denotes the distributional derivative. Then the following
 holds true.
 \begin{enumerate}[(a)]
  \item The function $v$ admits a bound
        \begin{equation} \label{e-Linfyabsch}
          \|v\|_{L^\infty(\Lambda)} \le c  \;
                % \frac{\lambda_d(\Lambda)^{\frac{1}{d} - \frac {1}{q}}}{\kappa_{\mathrm{ell}}} \;
                \sum_{j=0}^d \|f_j\|_{L^q(\Lambda)}.
        \end{equation}
  \item Suppose that there exists  $\gamma \in (0,1)$ such that $\Lambda$ is of class
    $(A_\gamma)$. Then $v$ is H\"older-continuous, more precisely: there is an $\alpha \in (0,1)$ independent of $f_0, f_1, \ldots,f_d$ such that
        \begin{equation} \label{e-Hoie}
          \sup_{\x, \y \in B_r(\z) \cap \Lambda}
                |v(\x) - v(\y)| \le c \; \sum_{j=0}^d \|f_j\|_{L^q(\Lambda)}
                \; r^\alpha
        \end{equation}
        holds true for all $\z \in \R^d$ and $r > 0$.
      \end{enumerate}
      In both estimates~\eqref{e-Linfyabsch} and~\eqref{e-Hoie}, the constant depends only on
      the geometry of $\Lambda$ and the $L^\infty(\Lambda)$-bound and ellipticity constant of
      $\mu$.
\end{proposition}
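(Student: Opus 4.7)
\textbf{Plan of proof for Proposition~\ref{p-lady}.}

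The plan is to follow the classical De~Giorgi--Nash--Moser programme for divergence-form equations with $L^q$ data, carrying the constants through the iteration to obtain the claimed uniformity. Since $v \in W^{1,2}_0(\Lambda)$, extending $v$ by zero to $\tilde v \in W^{1,2}(\R^d)$ and each $f_j$ by zero reduces every estimate to balls in $\R^d$ that need not be contained in $\Lambda$; in what follows I suppress the tildes.

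The first step is a Caccioppoli inequality on super-level sets. For $k \in \R$, $\z \in \R^d$, $r \in (0,1]$ and a smooth cutoff $\eta \in C^\infty_c(B_r(\z))$, I would test the weak form of~\eqref{e-gleichung} with $(v-k)_+\eta^2$. Ellipticity of $\mu$, Young's inequality, and H\"older's inequality applied to the $L^q$ right-hand side yield a bound on $\int |\nabla (v-k)_+|^2 \eta^2$ by $\int (v-k)_+^2 |\nabla \eta|^2$ plus a lower-order term carrying a positive power of $r$ and of $\lambda_d(\{v>k\}\cap B_r(\z))$, coming from the $f_j$. The condition $q>d$ enters precisely here: it makes the exponent on this super-level-set measure strictly less than one, so the forthcoming iteration has positive slack.

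I would then run a De~Giorgi iteration for assertion (a). Combining the Caccioppoli estimate with the Sobolev embedding $W^{1,2}(\R^d)\hookrightarrow L^{2d/(d-2)}(\R^d)$ (or the analogous embedding $W^{1,2}(\R^2)\hookrightarrow L^p(\R^2)$ with $p$ sufficiently large when $d=2$) yields a nonlinear recursion for $A_n \coloneqq \lambda_d(\{v>k_n\}\cap B_{r_n}(\z))$ along geometrically decreasing radii $r_n$ and increasing levels $k_n$. A standard fast-decay lemma then forces $A_n \to 0$ as soon as the initial level exceeds $c\sum_j\|f_j\|_{L^q(\Lambda)}$, which yields~\eqref{e-Linfyabsch}.

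For assertion (b), I would prove an oscillation-decay estimate
\[
\mathop{\mathrm{osc}}\limits_{B_{r/2}(\z)\cap\Lambda} v \;\le\; \theta\,\mathop{\mathrm{osc}}\limits_{B_r(\z)\cap\Lambda} v \;+\; c\, r^\beta\sum_{j=0}^d\|f_j\|_{L^q(\Lambda)}
\]
with some $\theta\in (0,1)$ and $\beta>0$ independent of $\z\in\R^d$ and $r\in (0,1]$, and then iterate in the usual way to conclude~\eqref{e-Hoie}. For interior balls $B_r(\z)\subset\Lambda$, this is the classical De~Giorgi oscillation lemma applied to $v$. The genuinely interesting---and main---obstacle is the boundary case $\z\in\partial\Lambda$, and it is exactly here that Assumption $(A_\gamma)$ enters: since the zero extension vanishes on $B_r(\z)\setminus\Lambda$ and $\lambda_d(B_r(\z)\setminus\Lambda)\ge \gamma\lambda_d(B_r(\z))$ by hypothesis, the sub-level set of $v$ at height zero occupies a definite fraction of the whole ball. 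This quantitative lower bound is precisely the ``non-small sub-level set'' input required to initialise the De~Giorgi iteration for the oscillation decay, uniformly in $\z$. Tracking the constants across every step delivers the uniformity claim with dependence only on $\gamma$, $q$, $\kappa_{\mathrm{ell}}$, $\|\mu\|_{L^\infty(\Lambda)}$, and $\mathrm{diam}(\Lambda)$.
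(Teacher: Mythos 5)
The paper does not prove Proposition~\ref{p-lady} at all; it is stated as a quoted classical theorem, with the reference given to Kinderlehrer--Stampacchia~\cite[Ch.~II Appendix~B/C]{kind} (credited also to Ladyzhenskaya--Ural'tseva). Your sketch is, in effect, a reconstruction of the argument one finds in that source: a Caccioppoli estimate on super-level sets (obtained by testing with $(v-k)_+\eta^2$), a De~Giorgi fast-decay iteration for the $L^\infty$ bound, and an oscillation-decay estimate for the H\"older modulus, with the $(A_\gamma)$ condition supplying the quantitative ``initial measure'' needed to run the boundary oscillation step for the zero extension of $v$. In particular, the way you identified $q>d$ as giving positive slack in the recursion and $(A_\gamma)$ as the hypothesis that initialises the iteration uniformly in $\z$ is exactly the role these conditions play in the textbook proof, and the resulting constants do depend only on $d$, $q$, $\gamma$, $\mathrm{diam}(\Lambda)$, $\kappa_{\mathrm{ell}}$ and $\|\mu\|_{L^\infty}$, which matches the uniformity asserted in the proposition.

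A small caveat worth keeping in mind to avoid confusion with the paper's narrative: when the authors say they want to ``avoid'' De~Giorgi machinery, they mean the substantially more involved Sobolev--Campanato/De~Giorgi apparatus developed in~\cite{ER} directly for the \emph{mixed} boundary value problem on a rough domain. The present paper's strategy is to localise, transform and reflect so as to reduce everything to the classical \emph{Dirichlet} situation on a set of class $(A_\gamma)$, and then simply cite Proposition~\ref{p-lady}. So your De~Giorgi-style outline is perfectly consistent with the paper; it just unpacks a black box that the authors chose to keep closed. Two minor technical remarks on your sketch: for $d=2$ the zero extension is compactly supported, so you can safely invoke $W^{1,2}\hookrightarrow L^p$ for large finite $p$ on a fixed bounded set; and in the boundary oscillation step, the $(A_\gamma)$ condition gives a definite-fraction bound simultaneously for $\{\tilde v\le 0\}\cap B_r(\z)$ and $\{\tilde v\ge 0\}\cap B_r(\z)$ (both contain $B_r(\z)\setminus\Lambda$), which is precisely what lets you start the iteration from whichever half-level is convenient.
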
 
\begin{rem} \label{r-interpret}
% \begin{enumerate}[(a)]
% \item\label{r-interpret:i}
  The right hand side of \eqref{e-gleichung} is to be understood as the antilinear form
 \[ W^{1,q'}_0(\Lambda) \ni \psi \mapsto \int_\Lambda f_0\overline{\psi} -
        \sum_{j=1}^d f_j \frac {\partial \overline{\psi}}{\partial x_j} 
 \]
 which clearly belongs to $W^{-1,q}(\Lambda) \hookrightarrow
 W^{-1,2}(\Lambda)$. Thus, the uniqueness of the solution $v$ follows from the ellipticity of
 $\ft$ and the Lax-Milgram lemma.
 % \item\label{r-interpret:ii}
 
 On the other hand, while every antilinear form in $W^{-1,q}(\Lambda)$ can be represented in the
 foregoing form, this representation is in general non-unique. But it is
 in fact well known that $W^{-1,q}(\Lambda)$ is isometrically isomorphic to the quotient space
 with respect to such representations; see \cite[Ch.~1.1.14]{mazyasob}. Hence, taking the
 infimum over all representing families in the estimates~\eqref{e-Linfyabsch}
 and~\eqref{e-Hoie}, in the setting of Proposition~\ref{p-lady} one obtains the continuity of
 \[ \cA_q^{-1} \colon W^{-1,q}(\Lambda) \to
        C^\alpha(\Lambda).
 \] 
 The norm of this mapping depends only on the geometry of $\Lambda$ and the $L^\infty(\Lambda)$-bound and
 ellipticity constant of $\mu$.
 %\end{enumerate}
\end{rem}
The following extrapolation of the Lax-Milgram isomorphism will give us the small $\eps$ in regularity that allows us to treat also the case of dimension four.
\begin{proposition}[\protect{\cite[Thm~5.6]{HJKR}}]
\label{p-groegerrepr}
 Let Assumptions~\ref{a-general}~\ref{a-general-1} and~\ref{a-general-2} be satisfied. Then
 there is an $\eps > 0$ such that $\dom(\cA_q + 1) = W^{1,q}_D(\Omega)$ for all $q \in [2,2+\eps)$, that is, the operator
 \[  \cA_q +1 \colon W^{1,q}_D(\Omega) \to W^{-1,q}_D(\Omega)
 \]
 is a topological isomorphism. The norms of $(\cA_q+1)^{-1}$ are uniform with respect to $\eps$
 and the  $L^\infty(\Omega)$-bound and
 ellipticity constant of $\mu$.
\end{proposition}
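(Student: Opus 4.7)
The statement is a Meyers/\v{S}ne\u{\i}berg-type extrapolation of the Lax--Milgram isomorphism. At $q=2$, the Lax--Milgram lemma yields that $\cA_2 + 1 \colon W^{1,2}_D(\Omega) \to W^{-1,2}_D(\Omega)$ is a topological isomorphism, with inverse bound depending only on $\kappa_{\mathrm{ell}}$ and $\|\mu\|_{L^\infty(\Omega)}$. The plan is to extrapolate this isomorphism into a right-neighbourhood of $q=2$ by means of \v{S}ne\u{\i}berg's stability theorem for interpolation scales of operators.

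To apply that theorem, I would first exhibit $\{W^{1,q}_D(\Omega)\}_q$ and $\{W^{-1,q}_D(\Omega)\}_q$ as complex interpolation scales near $q=2$. Proposition~\ref{p-2} provides, uniformly in $q \in [1,\infty)$, a continuous extension from $W^{1,q}_D(\Omega)$ into $W^{1,q}_D(\R^d)$, which realises $W^{1,q}_D(\Omega)$ as a retract of $W^{1,q}_D(\R^d)$. Since the latter is a closed subspace of $W^{1,q}(\R^d)$ with compatible defining seminorms across $q$, the standard complex interpolation scale on $\R^d$ descends to $\Omega$ via the retraction/coretraction principle. Antilinear duality, together with reflexivity for $q > 1$, then transfers the scale to the negative side, yielding $\{W^{-1,q}_D(\Omega)\}_q$ as an interpolation scale as well, with constants controlled by those of the extension operator.

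Next I would check the two prerequisites of \v{S}ne\u{\i}berg: first, the operators $\cA_q + 1 \colon W^{1,q}_D(\Omega) \to W^{-1,q}_D(\Omega)$ are uniformly bounded for $q$ in a fixed interval $[2, q_0]$, which follows immediately from $\mu \in L^\infty(\Omega)$ and H\"older's inequality since the target is the antidual of $W^{1,q'}_D(\Omega)$; second, at $q = 2$ the operator is the Lax--Milgram isomorphism. \v{S}ne\u{\i}berg's theorem then produces some $\eps > 0$, depending only on the isomorphism constant at $q=2$, the operator norms of $\cA_q + 1$ for $q$ slightly above $2$, and the constants of the retract pair, such that $\cA_q + 1$ remains a topological isomorphism from $W^{1,q}_D(\Omega)$ onto $W^{-1,q}_D(\Omega)$ throughout $q \in [2, 2+\eps)$, with $(\cA_q+1)^{-1}$ uniformly bounded. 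Since all of these constants depend only on $\kappa_{\mathrm{ell}}$, $\|\mu\|_{L^\infty(\Omega)}$, and on the geometry through Proposition~\ref{p-2}, the asserted uniformity follows.

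The main obstacle will be the rigorous identification of $\{W^{-1,q}_D(\Omega)\}_q$ as a genuine complex interpolation scale, and, more importantly, verifying that the inverse-operator bound produced by \v{S}ne\u{\i}berg's theorem is indeed controllable purely by $\kappa_{\mathrm{ell}}$, $\|\mu\|_{L^\infty(\Omega)}$, and the geometric data of $\Omega$ and $D$. The positive scale is comparatively clean given the uniform extension of Proposition~\ref{p-2}; the negative scale, however, requires that the interpolation commute with antilinear duality and that the retract structure survive dualisation, and it is precisely at this point that the careful bookkeeping invoked in the HJKR proof is necessary.
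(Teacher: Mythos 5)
This proposition is stated in the paper as a citation of \cite[Thm.~5.6]{HJKR} without proof, so you are reconstructing what that reference does. Your identification of the tool — Lax--Milgram at $q=2$, complex interpolation scales, and \v{S}ne\u{\i}berg extrapolation — is correct and is indeed the mechanism underlying the cited result, including the quantitative version giving uniform control of $\eps$ and of $\|(\cA_q+1)^{-1}\|$.

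However, there is a real gap in how you justify the positive interpolation scale, and it hides the point at which \assuref{a-general}~\ref{a-general-2} is actually needed. Proposition~\ref{p-2} makes $W^{1,q}_D(\Omega)$ a retract of $W^{1,q}_D(\R^d)$, but you then argue that $\bigl\{W^{1,q}_D(\R^d)\bigr\}_q$ interpolates ``since the latter is a closed subspace of $W^{1,q}(\R^d)$ with compatible defining seminorms.'' This is not a valid inference: complex interpolation does not in general commute with passage to closed subspaces, and a common dense class (here $C^\infty_D(\R^d)$) does not repair this. What one really needs is to exhibit $W^{1,q}_D$ itself as a retract of a known interpolation scale such as $W^{1,q}(\R^d)$, or to produce the interpolation identity by other means; in either case the key analytic input is a Hardy-type inequality controlling $u/\mathrm{dist}(\cdot,D)$, and it is precisely the Ahlfors regularity of $D$ — the $(d-1)$-set condition in \assuref{a-general}~\ref{a-general-2}, cf.~\cite{hardy} — that furnishes it. Your sketch never invokes this assumption, which in this proposition is not decorative: it is what makes the positive scale ``clean,'' not Proposition~\ref{p-2} alone. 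Once that is in place, the dual scale and \v{S}ne\u{\i}berg go through as you describe, and the uniformity statement follows from the quantitative form of the extrapolation together with the fact that the interpolation constants depend only on the geometry.
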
 
%

% The plan to prove Theorem~\ref{t-result}\label{p-Ablaufplan} will be to first localise the
% problem. For points in the (relative) interior of the Dirichlet boundary part we can immediately
% apply Proposition~\ref{p-lady}. Around points from the closure of the Neumann boundary part we
% will transform the localised version of Equation~\eqref{e-euq} with several bi-Lipschitz
% transforms until we get to a situation, where the domain is in a lower half-space and the
% Neumann boundary part is exactly the intersection of its boundary with the boundary of the
% half-space. Then we can do an even reflection along the boundary of the half-space, ending up
% with a pure Dirichlet problem, that, again by Proposition~\ref{p-lady}, has a
% H\"older-continuous solution.

The plan how we aim to prove Theorem~\ref{t-result} was already sketched above. We now have seen the
main tool with which we leverage H\"older-continuity for the localized and transformed problems in the form of Proposition~\ref{p-lady}. It
remains to make sure that the localization, transformation and possibly reflection techniques
are compatible with Proposition~\ref{p-lady}; this concerns continuity for the associated mappings between the function spaces involved and of course in particular the
assumption in the domain in Proposition~\ref{p-lady} for the actual H\"older estimate.

% In order to make this plan work, we have to make sure that all our transformations do not alter the H\"older-continuity of the solutions, do not destroy the regularity properties of the domains and give rise to continuous mappings between the occuring function spaces.
This we will do in the following series of technical lemmas. We start with three of them that
deal with the localization. Recall the notation $N = \partial \Omega \setminus D$ for the
Neumann boundary part. First, we deal with localized Sobolev functions with partially vanishing trace.
\begin{lemma}[\protect{\cite[Ch.~4.2]{HaR}}] \label{l-restr/ext}
%  \footnote{in \cite{HaR} steht in den Voraussetztungen jeweils '$\Omega
%  \cup \Gamma$ regular' drin. Das wird im Beweis wohl nicht gebraucht, das
%  sollte man hier aber mal bemerken. \begin{color} {red} nein, wird nicht gebraucht. ich hoffe, ich finde 
% eine neuere Quelle, wo wir das vern\"unftig aufgeschrieben haben \end{color}}
 Let $U \subseteq \R^d$ be open and set $\Omega_\bullet \coloneqq  \Omega
 \cap U$ as well as $D_\bullet \coloneqq  \partial \Omega_\bullet \setminus N$. Fix an arbitrary function $\eta \in C^\infty_c(\R^d)$ with $\supp(\eta) \subseteq U$. Then for any $q \in (1, \infty)$ we have the following assertions:
 \begin{enumerate}[(a)]
  \item \label{l-restr/ext:enum-i} If $v \in W^{1,q}_D(\Omega)$, then 
        $\eta v|_{\Omega_\bullet} \in W^{1,q}_{D_\bullet}(\Omega_\bullet)$.
  \item Denote by $E_0$ the zero extension operator % Let for any $v \in L^1(\Omega_\bullet) $ the symbol $\tilde v$ indicate
        % the  extension of $v$ to $\Omega$ by zero.
        % Then $E_0$ induces a continuous linear mapping
      %   \[ W^{1,q}_{D_\bullet}(\Omega_\bullet) \ni v \mapsto
      %           E_0 (\eta v) \in W^{1,q}_D (\Omega).
      %   \]
      %   %has its image in and is continuous.
      % \item
     \label{l-project:i}
    and let $f \in W^{-1,q}_D(\Omega)$. Then $ f \mapsto f_\bullet$ with
        \[f_\bullet \colon w \mapsto \bigl\langle f, E_0(\eta w) \bigr\rangle,
          \quad w \in W^{1,q'}_{D_\bullet}(\Omega_\bullet)
        \]
        defines a continuous linear operator
        $W^{-1,q}_D(\Omega) \to W^{-1,q}_{D_\bullet}(\Omega_\bullet)$.
 \end{enumerate}
\end{lemma}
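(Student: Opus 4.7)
My proof will reduce both statements to the defining density property of the Sobolev spaces with mixed boundary, based on careful bookkeeping for supports. The point driving everything is that $\eta\in C^\infty_c(\R^d)$ with $\supp(\eta)\subseteq U$ (with $U$ open) has positive distance $\delta \coloneqq \dist(\supp(\eta),\R^d\setminus U)>0$, so multiplication by $\eta$ produces a function vanishing in a neighbourhood of $\R^d\setminus U$, which in particular will cut off any contributions on $\partial U \cap \overline\Omega$.

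\textbf{Part (a).} First I will treat $v\in C^\infty_D(\Omega)$, i.e., a restriction of some $\tilde v\in C^\infty_c(\R^d)$ with $\supp(\tilde v)\cap D=\emptyset$. The product $\eta\tilde v$ lies in $C^\infty_c(\R^d)$, its support is contained in $\supp(\eta)\subseteq U$ and avoids $D$. A point $\x\in D_\bullet\coloneqq\partial\Omega_\bullet\setminus N$ is either (i) in $D$, where $\eta\tilde v$ vanishes by construction of $\tilde v$, or (ii) in $\partial\Omega_\bullet\setminus\partial\Omega$, hence in $\partial U\cap\overline\Omega$, where $\eta\tilde v$ vanishes in a $\delta$-neighbourhood. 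Therefore $\supp(\eta\tilde v)\cap D_\bullet=\emptyset$ and $\eta v|_{\Omega_\bullet}\in C^\infty_{D_\bullet}(\Omega_\bullet)$. Since the mapping $v\mapsto \eta v|_{\Omega_\bullet}$ is clearly continuous from $W^{1,q}(\Omega)$ to $W^{1,q}(\Omega_\bullet)$ by the product rule and $\|\eta\|_{W^{1,\infty}}<\infty$, density of $C^\infty_D(\Omega)$ in $W^{1,q}_D(\Omega)$ and closedness of $W^{1,q}_{D_\bullet}(\Omega_\bullet)$ in $W^{1,q}(\Omega_\bullet)$ yield the claim.

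\textbf{Part (b).} I will derive (b) by duality from the mirror version of (a): the mapping
\[
T\colon W^{1,q'}_{D_\bullet}(\Omega_\bullet)\to W^{1,q'}_D(\Omega),\qquad w\mapsto E_0(\eta w),
\]
is a well-defined bounded linear operator. To see this, start again with $w\in C^\infty_{D_\bullet}(\Omega_\bullet)$, coming from some $\tilde w\in C^\infty_c(\R^d)$ with $\supp(\tilde w)\cap D_\bullet=\emptyset$. The product $\eta\tilde w$ vanishes in a $\delta$-neighbourhood of $\R^d\setminus U$ and has support avoiding $D_\bullet$, so $(\eta\tilde w)|_{\Omega_\bullet}$ extends by zero to a function $E_0(\eta w)\in C^\infty_c(\R^d)$ whose support is contained in $\overline{\Omega_\bullet}\cap U$ and disjoint from $D$ (a point of $D$ lying in $\overline{\Omega_\bullet}\cap U$ would be in $D_\bullet$). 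Hence $E_0(\eta w)\in C^\infty_D(\Omega)$, and the usual product rule estimate gives $\|T w\|_{W^{1,q'}_D(\Omega)}\lesssim\|\eta\|_{W^{1,\infty}}\|w\|_{W^{1,q'}(\Omega_\bullet)}$. Density of $C^\infty_{D_\bullet}(\Omega_\bullet)$ in $W^{1,q'}_{D_\bullet}(\Omega_\bullet)$ then upgrades $T$ to a bounded linear operator on the full Sobolev space. The operator $f\mapsto f_\bullet=f\circ T$ is then precisely the (anti-)dual $T^*$, which is bounded $W^{-1,q}_D(\Omega)\to W^{-1,q}_{D_\bullet}(\Omega_\bullet)$ with norm $\|T^*\|=\|T\|$.

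\textbf{Main obstacle.} The substantive step is the support analysis separating the two kinds of points in $D_\bullet$, namely those originally in $D$ (handled by the defining support property of smooth functions in $C^\infty_D$ or $C^\infty_{D_\bullet}$) and those newly created on $\partial U \cap \overline\Omega$ (handled by $\supp(\eta)\subseteq U$ having positive distance to $\partial U$). Once this bookkeeping is in place, both parts follow from density and a routine product-rule estimate; part (b) is then a formal dualisation.
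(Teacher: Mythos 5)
Your proof is correct. The paper does not give its own proof of this lemma but cites it directly from \cite[Ch.~4.2]{HaR}, so there is no in-paper argument to compare against; your route via the support dichotomy for points of $D_\bullet$ (splitting into points inherited from $D$ and points newly created on $\partial U \cap \overline\Omega$), followed by density and a formal dualisation of the mirror map $T\colon w\mapsto E_0(\eta w)$, is the natural and standard argument and all the support bookkeeping checks out.
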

The next lemma is about the localization of a solution $v$ to the elliptic equation
$(\cA + 1)v = f$ and the 'localized' equation. Here and also in the following, we will need
several versions of the divergence-gradient type operators $\cA$ with different underlying
spatial sets, coefficient functions and associated Sobolev spaces respecting partially vanishing
trace conditions. We will use the notation $-\nabla \cdot\eta \nabla$ with the coefficient function
$\eta$ for these. It will always be clear from the context which precise incarnation is meant.

\begin{lemma}[\protect{\cite[Lem.~4.7]{HaR}}] \label{l-project}
 Let $U$, $\eta$, $\Omega_\bullet$ and $D_\bullet$ be as in the foregoing lemma. Set
 $\mu_\bullet \coloneqq \mu|_{\Omega_\bullet}$ and consider the
 operator $- \nabla \cdot \mu_\bullet \nabla \colon
 W^{1,2}_{D_\bullet}(\Omega_\bullet) \to W^{-1,2}_{D_\bullet}(\Omega_\bullet)$. Let $f \in W_D^{-1,2}(\Omega)$ and let $v \in
 W^{1,2}_D(\Omega)$ be the solution of $(\cA+1)v = f$.
 %
 % \begin{equation} \label{e-glei1}
 %   \cA v = f \in W_D^{-1,2}(\Omega).
 % \end{equation}
 %
 % Then the following holds true.
 % \begin{enumerate}[(a)]
 % \item \label{l-project:i}For all $q \in (1, \infty)$ the linear form
 %        %
 %   \[ f_\bullet : w \mapsto \langle f, \widetilde {\eta w} \rangle,
 %   \]
 %        %
 %   where the tilde again means the extension by zero to the whole of $\Omega$, is well defined
 %   and continuous on $W^{1,q'}_{D_\bullet}(\Omega_\bullet)$, whenever
 %   $f \in W^{-1,q}_D(\Omega)$.
 % \item \label{l-project:ii}
 Then $u \coloneqq \eta v|_{\Omega_\bullet} $ satisfies
 \begin{equation} \label{e-locglei} - \nabla \cdot \mu_\bullet \nabla u = f^\bullet \coloneqq
   f_\bullet - \nabla\cdot v\mu_\bullet\nabla\eta  - \mu_\bullet \nabla v|_{\Omega_\bullet} \cdot \nabla \eta|_{\Omega_\bullet}
   - \eta v|_{\Omega_\bullet}
   \quad \text{in} \quad W^{-1,2}_{D_\bullet}(\Omega_\bullet).
 \end{equation}
 % where $T_v$ is defined by
 % \[ T_v \colon \quad W^{1,2}_{D_\bullet}(\Omega_\bullet) \ni w \mapsto \int_{\Omega_\bullet} v
 %   \mu_\bullet \nabla \eta \cdot \nabla \overline{w}.
 % \]
 % by $T_v$, then $u \coloneqq \eta v|_{\Omega_\bullet} $ satisfies
 % %
 % \begin{equation} \label{e-locglei} - \nabla \cdot \mu_\bullet \nabla u =f_\bullet + T_v -
 %   \mu_\bullet \nabla v|_{\Omega_\bullet} \cdot \nabla \eta|_{\Omega_\bullet} =: f^\bullet.
 % \end{equation}
 % \end{enumerate}
\end{lemma}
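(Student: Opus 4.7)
The plan is to verify the claimed identity by unrolling the pairing $\langle -\nabla\cdot\mu_\bullet\nabla u, w\rangle = \int_{\Omega_\bullet}\mu_\bullet\nabla u \cdot \overline{\nabla w}$ via the Leibniz rule applied to $\nabla u = (v\nabla\eta + \eta\nabla v)|_{\Omega_\bullet}$, and then feeding the remaining term that still contains $\eta\nabla v$ back into the weak formulation of $(\cA+1)v = f$ tested against $E_0(\eta w)$. All four summands in $f^\bullet$ then arise as pure bookkeeping.

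More precisely, I would first note that by Lemma~\ref{l-restr/ext}~\ref{l-restr/ext:enum-i}, $u = \eta v|_{\Omega_\bullet}$ lies in $W^{1,2}_{D_\bullet}(\Omega_\bullet)$, so both sides of the identity make sense as antilinear functionals on this space. For a test function $w \in W^{1,2}_{D_\bullet}(\Omega_\bullet)$, the Leibniz rule splits $\int_{\Omega_\bullet}\mu_\bullet\nabla u \cdot \overline{\nabla w}$ into $\int_{\Omega_\bullet} v\mu_\bullet\nabla\eta\cdot\overline{\nabla w}$, which by definition is the action of $-\nabla\cdot(v\mu_\bullet\nabla\eta)$ on $w$, plus the remaining term $\int_{\Omega_\bullet}\eta\mu_\bullet\nabla v\cdot\overline{\nabla w}$.

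To handle the latter, I would apply the weak equation satisfied by $v$ to the test function $E_0(\eta w)$. The key admissibility check is that $E_0(\eta w) \in W^{1,2}_D(\Omega)$: this follows exactly as in Lemma~\ref{l-restr/ext}~\ref{l-restr/ext:enum-i}, since $\supp\eta \subset U$ and $w$ has vanishing trace on $D \cap \partial\Omega_\bullet \subseteq D_\bullet$, so zero-extension preserves both $W^{1,2}$-regularity and the vanishing trace on $D$. Plugging $\varphi = E_0(\eta w)$ into $\int_\Omega\mu\nabla v\cdot\overline{\nabla\varphi} + \int_\Omega v\overline{\varphi} = \langle f,\varphi\rangle$, expanding $\nabla\varphi = w\nabla\eta + \eta\nabla w$ on $\Omega_\bullet$, and using that $\eta$ is real so the conjugate pulls out cleanly, one rearranges to
\[
\int_{\Omega_\bullet}\eta\mu_\bullet\nabla v\cdot\overline{\nabla w} = \langle f_\bullet, w\rangle - \int_{\Omega_\bullet}\mu_\bullet\nabla v\cdot\nabla\eta\,\overline{w} - \int_{\Omega_\bullet}\eta v\,\overline{w}.
\]
Substituting this back into the splitting and interpreting the two integrals on the right as the natural images of $-\mu_\bullet\nabla v|_{\Omega_\bullet}\cdot\nabla\eta|_{\Omega_\bullet}$ and $-\eta v|_{\Omega_\bullet}$ from $L^2(\Omega_\bullet)$ inside $W^{-1,2}_{D_\bullet}(\Omega_\bullet)$ recovers precisely the claimed formula for $f^\bullet$.

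The hard part, such as it is, will be bookkeeping rather than analytic: keeping the conjugations consistent in the antilinear duality pairings (the reality of $\eta$ is what allows the conjugate to pass through cleanly in the split above), and verifying that $E_0(\eta w)$ is a legitimate test function in $W^{1,2}_D(\Omega)$. The latter is essentially just Lemma~\ref{l-restr/ext}~\ref{l-restr/ext:enum-i} applied with $w$ in place of $v$, so no new ingredients are needed beyond what has already been set up.
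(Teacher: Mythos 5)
The paper gives no proof of this lemma, citing it directly to \cite[Lem.~4.7]{HaR}. Your argument is the standard (and almost certainly the same as the one in that reference) localization computation: split $\nabla(\eta v)$ by the Leibniz rule, test the global weak equation against $E_0(\eta w)$, and rearrange. The computation is correct and all four terms of $f^\bullet$ drop out as stated.

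One small imprecision worth flagging: you justify $E_0(\eta w)\in W^{1,2}_D(\Omega)$ by appeal to Lemma~\ref{l-restr/ext}~\ref{l-restr/ext:enum-i}, but that item goes the other way (restriction from $\Omega$ to $\Omega_\bullet$). The admissibility of $E_0(\eta w)$ as a test function is what implicitly underlies Lemma~\ref{l-restr/ext}~\ref{l-project:i}, which already defines $f_\bullet$ through the pairing $\langle f, E_0(\eta w)\rangle$ and hence presupposes exactly this membership; citing part~\ref{l-project:i} (or proving the extension claim by the same density-of-smooth-functions argument in the reverse direction) would be cleaner. This is a matter of attribution, not a gap in the reasoning.
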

Note that $D_\bullet$ will always be a nontrivial boundary part of $\Omega_\bullet$ due to the
localization procedure as established in Lemma~\ref{l-restr/ext}. It is thus convenient to
consider the localized problem without a zero-order term as in~\eqref{e-locglei}, since this is
ultimately also the form about which Proposition~\ref{p-lady} makes a statement.      
%
%
% \begin{proof}
% \begin{enumerate}[(a)]
%  \item The mapping $f \mapsto f_\bullet$ is the adjoint to $v \mapsto
%         \widetilde{\eta v}$, which maps by the preceding lemma
%         $W^{1,q'}_{D_\bullet}(\Omega_\bullet)$ continuously into
%         $W^{1,q'}_D(\Omega)$.
%  \item The assertion is proved in \cite[Lemma~4.7]{HaR}.
%         \qedhere
%  \end{enumerate}
% \end{proof}
%
%
\begin{lemma} \label{e-abschfbullet} Let Assumptions~\ref{a-general}~\ref{a-general-1} be
  satisfied; if $d=4$, let also Assumption~\ref{a-general}~\ref{a-general-2} hold true. Take
  $U$, $\eta$, $\Omega_\bullet$ and $D_\bullet$ as in Lemma~\ref{l-restr/ext}. Let further
  $f \in W^{-1,q}_D(\Omega)$ for some $q > d$ and consider $f^\bullet$, defined as in
  Lemma~\ref{l-project} via $(\cA+1)v = f$. Furthermore, assume that there is a linear extension
  operator $\mathfrak E_\bullet$ which acts continuously from
  $W^{1,r}_{D_\bullet}(\Omega_\bullet)$ into $W^{1,r}(\R^d)$ for $r \in (1,\infty)$. Then there
  exists $p>d$ such that $f^\bullet \in W^{-1,p}_{D_\bullet}(\Omega_\bullet)$, and the mapping
  $W^{-1,q}_D(\Omega) \ni f \mapsto f^\bullet \in W^{-1,p}_{D_\bullet}(\Omega_\bullet)$ is
  continuous.
 % \begin{enumerate}[(a)]
 %  \item Assume $d = 2$ and $q \ge 2$ or $d = 3$ and $q \in [2,6]$. Then $f^\bullet \in W^{-1,q}_{D_\bullet}(\Omega_\bullet)$, and the mapping $W^{-1,q}_D(\Omega) \ni f \mapsto f^\bullet \in W^{-1,q}_{D_\bullet}(\Omega_\bullet)$ is continuous.
 %  \item Assume $d = 4$ and $q > 4$. Then there is a $p > 4$ such that for every $f \in W^{-1,q}_D(\Omega)$ one has $f^\bullet \in W^{-1,p}_{D_\bullet}(\Omega_\bullet)$. Moreover, the mapping $W^{-1,q}_D(\Omega) \ni f \mapsto f^\bullet \in W^{-1,p}_{D_\bullet}(\Omega_\bullet)$ is continuous.
 % \end{enumerate}
\end{lemma}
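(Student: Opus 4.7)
The plan is to analyze each of the four summands in the definition of $f^\bullet$ from~\eqref{e-locglei}---namely $f_\bullet$, the divergence term $-\nabla\cdot(v\mu_\bullet\nabla\eta)$, the product $-\mu_\bullet\nabla v|_{\Omega_\bullet}\cdot\nabla\eta|_{\Omega_\bullet}$, and the zero-order contribution $-\eta v|_{\Omega_\bullet}$---separately, and to show that each lies in $W^{-1,p}_{D_\bullet}(\Omega_\bullet)$ for a common $p>d$, with continuous dependence on $f$. The first summand is handled immediately by Lemma~\ref{l-restr/ext}~\ref{l-project:i}, which produces $f_\bullet \in W^{-1,q}_{D_\bullet}(\Omega_\bullet)$ with continuous dependence, and $q > d$ by assumption.

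For the three remaining summands, the starting point is Sobolev regularity of the solution $v$ to $(\cA+1)v = f$. From the Lax-Milgram lemma applied in $W^{1,2}_D(\Omega)$, we have $v \in W^{1,2}_D(\Omega)$ with norm bounded linearly by $\|f\|_{W^{-1,q}_D(\Omega)}$. The Sobolev embedding described in Remark~\ref{r-p2}, available via the extension operator from Proposition~\ref{p-2}, then yields $v \in L^s(\Omega)$ for some $s > d$ in the cases $d \in \{2,3\}$, with $\nabla v \in L^2(\Omega)$. For $d = 4$ this is insufficient: the $W^{1,2}$-regularity only produces $v \in L^4$ and $\nabla v \in L^2$, precisely missing the target. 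Here I invoke Proposition~\ref{p-groegerrepr}---and this is exactly the reason the lemma imposes Assumption~\ref{a-general}~\ref{a-general-2} in dimension four---to upgrade to $v \in W^{1,r}_D(\Omega)$ for some $r > 2$ with $r \leq q$, whence Sobolev embedding again furnishes $v \in L^s(\Omega)$ for some $s > 4$ and $\nabla v \in L^{r}(\Omega)$.

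With these Sobolev bounds in place, the divergence term $\nabla\cdot(v\mu_\bullet\nabla\eta)$ lies in $W^{-1,s}_{D_\bullet}(\Omega_\bullet)$ by integration by parts against test functions from $W^{1,s'}_{D_\bullet}(\Omega_\bullet)$, using boundedness of $\mu_\bullet$ and $\nabla \eta$. The two remaining terms lie in $L^\rho(\Omega_\bullet)$ with $\rho = 2$ for $d\in\{2,3\}$ and $\rho = r > 2$ for $d=4$. To obtain $L^\rho(\Omega_\bullet) \hookrightarrow W^{-1,p}_{D_\bullet}(\Omega_\bullet)$ one needs the dual Sobolev embedding $W^{1,p'}_{D_\bullet}(\Omega_\bullet) \hookrightarrow L^{\rho'}(\Omega_\bullet)$, and this is where the hypothesized extension operator $\mathfrak E_\bullet$ enters: one extends to $W^{1,p'}(\R^d)$, applies the standard Sobolev embedding in $\R^d$, and restricts. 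A short computation with the Sobolev exponents then reveals that $p > d$ is admissible precisely when $\rho > d/2$, which holds in all three dimensions---\emph{barely} for $d = 4$, thanks to the Gr\"oger extrapolation. Taking the smallest of the obtained $p$-values gives the desired common exponent, and continuity of $f \mapsto f^\bullet$ follows since every intermediate step is linear and bounded in terms of $\|f\|_{W^{-1,q}_D(\Omega)}$. The chief obstacle is accordingly the critical dimension $d = 4$, where the $W^{1,2}$-Sobolev exponent coincides exactly with $d$ and the slight gain afforded by Proposition~\ref{p-groegerrepr} is indispensable.
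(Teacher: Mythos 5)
Your proposal is correct and follows essentially the same route as the paper's own proof: decompose $f^\bullet$ into its four summands, handle $f_\bullet$ via Lemma~\ref{l-restr/ext}~\ref{l-project:i}, bound the three remaining terms using $W^{1,2}$-regularity plus Sobolev embedding for $d\in\{2,3\}$ and the Gr\"oger-type extrapolation of Proposition~\ref{p-groegerrepr} for $d=4$, and use the hypothesized extension operator $\mathfrak E_\bullet$ to secure the Sobolev embeddings on $\Omega_\bullet$. The only cosmetic difference is that the paper places the zero-order term $\eta v|_{\Omega_\bullet}$ directly in $L^p$ with $p=\min(q,6)$ (resp.\ $p=4\tfrac{2+\eps}{2-\eps}$), whereas you put it in the rougher space $L^\rho$ with $\rho=2$ (resp.\ $2+\eps$) alongside the gradient term; both choices are compatible with the final $p>d$ and your arithmetic identifying $\rho>d/2$ as the decisive threshold is exactly right.
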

\begin{proof}
  Let us first recall that the usual Sobolev embeddings hold for both $\Omega$ and
  $\Omega_\bullet$, respectively, cf.~Remark~\ref{r-p2}. Now, let us consider the terms in the right hand side of~\eqref{e-locglei}, so the definition of
 $f^\bullet$, from left to right. We have $f_\bullet \in W^{-1,q}_{D_\bullet}(\Omega_\bullet)$
 depending continuously on $f \in W^{-1,q}_D(\Omega)$
 thanks to Lemma~\ref{l-restr/ext}~\ref{l-project:i}, so this term is fine without further
 ado. For the remaining terms, we distinguish between $d=2,3$ and $d=4$, starting with the
 former.  We note that the proof of the continuity of $f\mapsto f_\bullet$ is implicitly contained in the following estimates.
% \begin{enumerate}[(a)]
% \item

 Let first $d=2,3$.  Due to the Lax-Milgram lemma and Sobolev embedding, we have
 \begin{equation} \label{e-estim}
  \|v\|_{W^{1,2}_D (\Omega)} \le c \|f\|_{W^{-1,2}_D (\Omega)} \le c \|f\|_{W^{-1,q}_D (\Omega)}
\end{equation}
where $c$ only depends on geometry and the ellipticity constant of $\mu$. Concerning $- \nabla\cdot v\mu_\bullet\nabla\eta$, for any $p \in [1,\infty]$ we have the estimate
   \begin{equation}\label{e-ab0sch}
     \bigl| \langle - \nabla\cdot v\mu_\bullet\nabla\eta, w \rangle \bigr|
     \le \|v \|_{L^p(\Omega_\bullet)} \; \| \mu
     \|_{L^\infty(\Omega; \R^{d\times d})} \; \| \nabla \eta
     \|_{L^\infty(\Omega_\bullet)} \; \|w
     \|_{W_{D_\bullet}^{1,p'}(\Omega_\bullet)}
   \end{equation}
  In particular, for $p=\min(q,6) >d$, we find
   \begin{equation*}
     \bigl| \langle
            - \nabla\cdot v\mu_\bullet\nabla\eta, w \rangle \bigr|\le c \|f \|_{W^{-1,q}_D(\Omega)} \; \| \mu
            \|_{L^\infty(\Omega; \R^{d\times d})} \; \| \nabla \eta
            \|_{L^\infty(\Omega_\bullet)} \; \|w
            \|_{W_{D_\bullet}^{1,p'}(\Omega_\bullet)}%\nonumber
  \end{equation*}
  thanks to the Sobolev embedding $W^{1,2}_D (\Omega) \hookrightarrow L^6(\Omega)\embeds L^p(\Omega)$ and
  estimate~\eqref{e-estim}. Thus, $- \nabla\cdot v\mu_\bullet\nabla\eta \in W^{-1,p}_{D_\bullet}(\Omega_\bullet)$. The same
  argument and~\eqref{e-estim}
  moreover shows that $\eta v|_{\Omega_\bullet} \in L^p(\Omega_\bullet) \embeds
  W^{-1,p}_{D_\bullet}(\Omega_\bullet)$. 

 Concerning the term $\mu_\bullet \nabla v|_{\Omega_\bullet} \cdot \nabla
 \eta|_{\Omega_\bullet}$, it is easily observed that if $v \in W^{1,r}_D(\Omega)$, then the term belongs to $L^r(\Omega)$ with the estimate
 \begin{equation}
  \|\mu_\bullet \nabla v|_{\Omega_\bullet} \cdot \nabla \eta|_{\Omega_\bullet}\|_{L^r(\Omega)}
  \le \|\mu\|_{L^\infty(\Omega; \R^{d\times d})} \|\nabla \eta\|_{L^\infty(\Omega)}
  \|v\|_{W^{1,r}_D (\Omega)} \label{eq:e-ab0sch2}.
\end{equation}
In particular, for $r=2$, we obtain via~\eqref{e-estim}:
\begin{equation*}
   \|\mu_\bullet \nabla v|_{\Omega_\bullet} \cdot \nabla \eta|_{\Omega_\bullet}\|_{L^2(\Omega)}\leq c \|\mu\|_{L^\infty(\Omega; \R^{d \times d})}\|\nabla \eta\|_{L^\infty(\Omega)} \|f\|_{W^{-1,q}_D (\Omega)}.
 \end{equation*}
 Thus, with the same choice for $p$ as before,  $\mu_\bullet \nabla v|_{\Omega_\bullet} \cdot \nabla \eta|_{\Omega_\bullet} \in
 W^{-1,p}_D(\Omega_\bullet)$ due to the embedding $L^2(\Omega_\bullet) \hookrightarrow
 W^{-1,p}_{D_\bullet}(\Omega_\bullet)$.
 % , which holds true for any $p\geq 1$ if $d=2$ and for $p \in
 %[2,6]$ if $d=3$. % (Note that one has the required embedding at hand due to our supposition on the existence of the extension operator $\mathfrak E_\bullet$.)

 Now let $d=4$. Thanks to Proposition~\ref{p-groegerrepr}, there is an $\eps>0$ such
   that $v \in W^{1,2+\eps}_D(\Omega)$ with the estimate
   \begin{equation}
     \|v\|_{W^{1,2+\eps}_D(\Omega)} \le c \|f\|_{W^{-1,2+\eps}_D(\Omega)} \le c\|f\|_{W^{-1,q}_D(\Omega)}.\label{eq:groeger-estimate}
   \end{equation}
   Having this at hand, for the estimate of the term $- \nabla\cdot v\mu_\bullet\nabla\eta$ we again exploit~\eqref{e-ab0sch},
   this time taking $p = 4\cdot \frac
   {2+\eps}{2-\eps}$ such that precisely
   $W^{1,2+\eps}_D(\Omega) \hookrightarrow L^p(\Omega)$. Note that $p>4=d$. Again, it follows
   analogously, this time via~\eqref{eq:groeger-estimate}, that $\eta v|_{\Omega_\bullet} \in
   L^p(\Omega_\bullet) \embeds W^{-1,p}_{D_\bullet}(\Omega_\bullet)$.  

   Finally, we estimate 
   again as in~\eqref{eq:e-ab0sch2} but pick $r=2+\eps$ and consider~\eqref{eq:groeger-estimate} to observe
   $\mu_\bullet \nabla v|_{\Omega_\bullet} \cdot \nabla \eta|_{\Omega_\bullet} \in
   L^{2+\eps}(\Omega_\bullet)$ together
   with the estimate
 \begin{equation*}
\|\mu_\bullet \nabla v|_{\Omega_\bullet} \cdot \nabla \eta|_{\Omega_\bullet}\|_{L^{2+\eps}(\Omega)} \le c \|\mu\|_{L^\infty(\Omega; \R^{d \times d})} \|\nabla \eta\|_{L^\infty(\Omega)} \|f\|_{W^{-1,q}_D(\Omega)}.
\end{equation*}
 With $p$ as before, one has the embedding
   $L^{2 + \eps}(\Omega_\bullet) \hookrightarrow W^{-1,p}_{D_\bullet}(\Omega_\bullet)$ and the
   claim follows.
 %\end{enumerate}
\end{proof}

We now consider bi-Lipschitz transformations of the geometric setting.

\begin{proposition} \label{p-transform}
Let $\Lambda \subseteq \R^d$ be a bounded, open set that is a Lipschitz domain, i.e., $\Lambda$ satisfies Assumption~\ref{a-general}~\ref{a-general-1} in every point $\x \in \partial \Lambda$. Let $\Sigma$ be a closed
subset of its boundary. Assume that $\phi$ is a mapping from a
neighbourhood of $\overline {\Lambda}$ into $\R^d$ that is bi-Lipschitz. Let
us denote $\Lambda^\# \coloneqq  \phi(\Lambda)$ and $\Sigma^\# \coloneqq  \phi(\Sigma)$. Then the following holds true.
\begin{enumerate}[(a)]
\item \label{p-transform:i}For every $p \in (1, \infty)$ and every $\alpha \in (0,1)$, the mapping $\phi$
        induces a linear, topological isomorphism $\Phi f \coloneqq f \circ \phi$ acting between
        \[ \Phi \colon W_{\Sigma^\#}^{1,p}(\Lambda^\#) \to W^{1,p}_{\Sigma}(\Lambda) \quad
          \text{and} \quad C^\alpha(\Lambda^\#) \to C^\alpha(\Lambda).
        \]
        %
% \item \label{p-transform:ii}The adjoint operator $\Phi_{p'}^*$ is a linear,
%         topological isomorphism between $W_\Sigma^{-1,p}(\Lambda)$ and $W^{-1,p}_{\Sigma^\#}(\Lambda^\#)$.
      \item \label{p-transform:iii} Let $\omega$ be an essentially bounded, measurable function
        on $\Lambda $, taking its values in the set of $(d \times d)$-matrices. Then
        \[ \Phi^* \Bigl[-\nabla \cdot \omega \nabla\Bigr]\Phi = -\nabla \cdot \omega^\# \nabla
        \]
        with
        \begin{equation}\label{e-transformat1}
          \omega^\# (\mathrm{y}) \coloneqq  
         \frac{(D\phi)\bigl(\phi^{-1}(\mathrm{y})\bigr)\;
                \omega\bigl(\phi^{-1}(\mathrm{y})\bigr)\; ( D \phi )^T
                \bigl(\phi^{-1} (\mathrm{y})\bigr)}{\big| \det
                (D\phi)\bigl(\phi^{-1}({\mathrm{y}})\bigr)\big|}               \end{equation} 
        for almost all $\mathrm{y} \in \Lambda^\#$. Here, $D\phi$ denotes the Fr\'echet derivative of $\phi$ and $\det(D\phi)$ the corresponding determinant.
      \item \label{p-transform:iv} If $\omega$ is real and uniformly elliptic almost
        everywhere on $\Lambda$, then so is $\omega^\#$ on $\Lambda^\#$.
\end{enumerate}
\end{proposition}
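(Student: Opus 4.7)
The proof naturally splits into the three assertions, all of which rest on Rademacher's theorem (so $\phi$ and $\phi^{-1}$ are differentiable almost everywhere with essentially bounded derivatives) and the resulting chain rule and change-of-variables formula for bi-Lipschitz maps.

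For~\ref{p-transform:i}, the H\"older isomorphism is immediate from the comparability $c^{-1}|\x-\y| \le |\phi(\x)-\phi(\y)| \le c|\x-\y|$. For the Sobolev isomorphism, the chain rule yields $\nabla(\Phi u)(\x) = [D\phi(\x)]^T(\nabla u)(\phi(\x))$ almost everywhere, and combined with the change-of-variables formula (whose Jacobian $|\det D\phi|$ is bounded above and away from zero) one obtains a bounded linear map $\Phi\colon W^{1,p}(\Lambda^\#) \to W^{1,p}(\Lambda)$, with the analogous statement for the inverse $u \mapsto u \circ \phi^{-1}$. The slightly delicate point is that this isomorphism respects the partial trace condition. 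To see this, take $u \in C^\infty_{\Sigma^\#}(\Lambda^\#)$, given as the restriction of some $\tilde u \in C^\infty_c(\R^d)$ with $\supp \tilde u \cap \Sigma^\# = \emptyset$. Since $\phi$ is a homeomorphism on a neighbourhood of $\overline\Lambda$, the composition $\tilde u \circ \phi$ is Lipschitz on a neighbourhood of $\overline\Lambda$ with compact support disjoint from the closed set $\Sigma$. Mollifying in a neighbourhood of $\supp(\tilde u \circ \phi)$ that stays away from $\Sigma$ yields $C^\infty_\Sigma(\Lambda)$-approximations of $\Phi u$ in $W^{1,p}(\Lambda)$, so $\Phi u \in W^{1,p}_\Sigma(\Lambda)$. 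Density of $C^\infty_{\Sigma^\#}(\Lambda^\#)$ in $W^{1,p}_{\Sigma^\#}(\Lambda^\#)$ together with continuity of $\Phi$ then conclude the argument.

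For~\ref{p-transform:iii}, one works at the level of sesquilinear forms: for suitable $u, v$ on $\Lambda^\#$, the chain rule and the change of variables $\y = \phi(\x)$ give
\[
\int_\Lambda \omega \nabla(\Phi u) \cdot \overline{\nabla(\Phi v)} \dd\x = \int_\Lambda \bigl[D\phi\,\omega\,(D\phi)^T\bigr] (\nabla u \circ \phi) \cdot \overline{(\nabla v \circ \phi)} \dd\x = \int_{\Lambda^\#} \omega^\#\, \nabla u \cdot \overline{\nabla v} \dd\y
\]
with $\omega^\#$ as in~\eqref{e-transformat1}, which is precisely the claimed identity in weak form. For~\ref{p-transform:iv}, essential boundedness of $\omega^\#$ is clear from~\eqref{e-transformat1}; for uniform ellipticity, with $\x = \phi^{-1}(\y)$,
\[
\bigl(\omega^\#(\y)\xi,\xi\bigr)_{\R^d} \ge \kappa_{\mathrm{ell}} \frac{|(D\phi(\x))^T \xi|^2}{|\det D\phi(\x)|} \ge c|\xi|^2,
\]
the last inequality using the bound $|(D\phi)^T\xi|\ge \|D\phi^{-1}\|_{L^\infty}^{-1}|\xi|$, which follows from $(D\phi)(D\phi^{-1}) = I$ almost everywhere on $\Lambda^\#$, combined with essential boundedness of $|\det D\phi|$.

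The main obstacle in this plan is the preservation of the vanishing trace on $\Sigma$ under $\Phi$ in~\ref{p-transform:i}; the remaining assertions then follow by direct computation using the standard differential and integral calculus for bi-Lipschitz maps.
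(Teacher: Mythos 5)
Your argument is correct and amounts to the same approach as the paper, which simply cites \cite[Thm.~2.10]{ggkr} for the Sobolev isomorphism in part~\ref{p-transform:i}, \cite{hall} or \cite{ausch} for part~\ref{p-transform:iii}, and \cite{ev/gar} for the essential boundedness of $D\phi$ and $(D\phi)^{-1}$ used in part~\ref{p-transform:iv}. You have essentially unpacked these citations into a self-contained proof, including the mollification argument for the preservation of the vanishing trace on $\Sigma$, which is precisely the technical content of the cited reference for~\ref{p-transform:i}.
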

\begin{proof}
  The proof of~\ref{p-transform:i} for the Sobolev spaces is contained in~\cite[Thm~2.10]{ggkr};
  for the H\"older spaces it is easy to verify. Part~\ref{p-transform:iii} is well known,
  see~\cite{hall} for an explicit verification,
  or~\cite[Ch.~0.8]{ausch}. Finally,~\ref{p-transform:iv} is implied by~\eqref{e-transformat1}
  and the fact that for a bi-Lipschitz function $\phi$ the derivative $D\phi$ and its inverse
  $(D \phi)^{-1}$ are essentially bounded, see~\cite[Ch~3.1]{ev/gar}.
\end{proof}

It will be very useful that the class $(A_\gamma)$ as in Assumption~\ref{a-Interface}~\ref{a-Interface:i} is preserved under bi-Lipschitz transformations, precisely: 
\begin{lemma} \label{l-erhaeltsich}
Let $\phi\colon\R^d \to \R^d$ be a bi-Lipschitz map and assume that $\Omega$ and $D$ satisfy
Assumption~\ref{a-Interface}~\ref{a-Interface:i}, so $\Omega$ is of class $(A_\gamma)$ at
$D$. Then $\phi(\Omega)$ is of class $(A_{\gamma_\phi})$ at $\phi(D)$, that is, there is a constant $\gamma_\phi>0$ such that for all $\y \in D$ and all $r \in (0,1]$:
\[ \lambda_d \bigl( B_r(\phi(\y)) \setminus \phi(\Omega) \bigr) \ge  \gamma_\phi \lambda_d \bigl( B_r(\phi(\y) \bigr).
\]

%\BR vielleicht doch $\phi$ nur in einer Umgebung der Menge definieren?? \\
%\ER
%   \renewcommand{\labelenumi}{\roman{enumi})}
% \begin{enumerate} 
% \item
% Assume that $\y \in \partial \Lambda$ is a point which satisfies Condition (\ref{e-anteil}).
%  Then there is a number $\hat {\gamma}$ and and a $\hat R >0$ such that 
% \[
% \mes \bigl (B_r(\phi(\y)) 
% \setminus \phi(\Lambda)) \ge \hat \gamma \mes \bigl (B_r(\phi(\y))\bigr )
% \]
%  holds for all $r \in ]0,\hat R[$.
% \item
% Assume that the assumption in i) is satisfied for the elements $\y$ from a certain 
% subset $\Sigma \subset \partial \Lambda$ and the constants $\gamma$ and $R$ are uniform 
% on $\Sigma$. Then the constants $\hat \gamma$ and $\hat R$ also may be taken uniform for
% all points from $\phi(\Sigma)$.
% \end{enumerate}
\end{lemma}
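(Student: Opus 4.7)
\smallskip

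\noindent\textbf{Proof proposal.} The plan is to directly transport the $(A_\gamma)$ condition at $\y \in D$ to one at $\phi(\y) \in \phi(D)$, using only bijectivity of $\phi$, the two-sided Lipschitz bound, and the standard fact that Lipschitz maps distort $d$-dimensional Lebesgue measure by at most the $d$-th power of their Lipschitz constant. Let $L \geq 1$ be a common upper bound for the Lipschitz constants of $\phi$ and $\phi^{-1}$ (we may always enlarge the constant to be at least $1$). Fix $\y \in D$ and $r \in (0,1]$.

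The key geometric inclusion is
\[
\phi\bigl(B_{r/L}(\y) \setminus \Omega\bigr) \subseteq B_r(\phi(\y)) \setminus \phi(\Omega).
\]
Indeed, for $z \in B_{r/L}(\y)$ we have $|\phi(z) - \phi(\y)| \leq L|z - \y| < r$, so $\phi(B_{r/L}(\y)) \subseteq B_r(\phi(\y))$, and bijectivity of $\phi$ gives $\phi(B_{r/L}(\y) \setminus \Omega) = \phi(B_{r/L}(\y)) \setminus \phi(\Omega)$.

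Next, since $\phi^{-1}$ is Lipschitz with constant $\leq L$, for any Borel set $E$ we have $\lambda_d(E) = \lambda_d(\phi^{-1}(\phi(E))) \leq L^d \lambda_d(\phi(E))$, hence $\lambda_d(\phi(E)) \geq L^{-d}\lambda_d(E)$. (Bi-Lipschitz maps are homeomorphisms of $\R^d$, so images of Borel sets are Borel, and $B_{r/L}(\y) \setminus \Omega$ is Borel since $\Omega$ is open.) Applying this to $E = B_{r/L}(\y) \setminus \Omega$ and invoking the $(A_\gamma)$ condition for $\Omega$ at $\y \in D$ at radius $r/L \leq r \leq 1$,
\begin{align*}
\lambda_d\bigl(B_r(\phi(\y)) \setminus \phi(\Omega)\bigr)
&\geq \lambda_d\bigl(\phi(B_{r/L}(\y) \setminus \Omega)\bigr)
 \geq L^{-d}\, \lambda_d\bigl(B_{r/L}(\y) \setminus \Omega\bigr) \\
&\geq L^{-d}\, \gamma\, \omega_d (r/L)^d
 = \gamma L^{-2d}\, \lambda_d\bigl(B_r(\phi(\y))\bigr),
\end{align*}
so the claim holds with $\gamma_\phi = \gamma L^{-2d}$.

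There is no real obstacle here; the only minor point of care is ensuring that the rescaled radius $r/L$ still lies in $(0,1]$ so that $(A_\gamma)$ is applicable, which is precisely why we normalize $L \geq 1$ from the outset.
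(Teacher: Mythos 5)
Your proof is correct and takes essentially the same approach as the paper: both rescale the radius to $r/L$, invoke the $(A_\gamma)$ condition there, and transfer the measure estimate via the $L^d$ distortion bound for the inverse Lipschitz map; the paper just pulls back with $\phi^{-1}$ from $B_r(\phi(\y))\setminus\phi(\Omega)$ while you push forward with $\phi$ from $B_{r/L}(\y)\setminus\Omega$, which is the same argument read in the other direction.
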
 
\begin{proof}
For every Lebesgue measurable set $B \subseteq \R^d$ one has $\lambda_d(B) \ge \frac{1}{\ell^d}
\lambda_d(\phi^{-1}(B))$, where $\ell$ is a Lipschitz constant of $\phi^{-1}$, cf.\@
\cite[Chapter~3.3]{ev/gar}. This entails for every $\y \in D$ and all $r \in (0,1]$ that
\begin{align*}
 \lambda_d \bigl( B_r(\phi(\y)) \setminus \phi(\Omega) \bigr) &\ge \frac{1}{\ell^d}  \lambda_d \bigl(\phi^{-1} \bigl( B_r(\phi(\y)) \setminus \phi(\Omega) \bigr) \bigr) = \frac{1}{\ell^d}  \lambda_d \bigl(\phi^{-1} \bigl( B_r(\phi(\y)) \bigr) \setminus \Omega \bigr). \\
 \intertext{But $\phi^{-1}(B_r(\phi(\y)))$ contains the ball $B_\frac {r}{L}(\y)$, where $L \ge 1$ is a Lipschitz constant of $\phi$. Using this and Assumption~\ref{a-Interface}~\ref{a-Interface:i} we may continue to estimate by}
 &\ge \frac{1}{\ell^d} \lambda_d \bigl( B_{\frac rL}(\y) \setminus \Omega \bigr) \ge \frac{\gamma}{\ell^d} \lambda_d \bigl( B_{\frac rL}(\y) \bigr) = \frac{\gamma}{\ell^d L^d} \lambda_d\bigl(B_r(\phi(\y)) \bigr)
% = \tilde \gamma \lambda_d \bigl(B_r(\phi(\y)) \bigr)
\end{align*}
and we are done.
\end{proof}

As a final step in this preparatory subsection, we prepare the reflection argument in the proof of
Theorem~\ref{t-result}. For this we consider the matrix $R \coloneqq  \diag (1,1,\dots,1,-1) \in
\R^{d\times d}$ and define the bi-Lipschitz map $\phi_R(\x) = R \x$ for $\x \in \R^d$ that reflects at the plane $[x_d = 0]$.
\begin{lemma} \label{p-spiegel}
 Let $\Lambda \subseteq [x_d <0]$ be open and bounded and define $\Gamma$ as the
 (relative) interior of ${\partial \Lambda \cap [x_d=0]}$ in the plane $[x_d = 0]$. Furthermore, set $\Sigma \coloneqq  \partial \Lambda \setminus
 \Gamma$ and $\widehat \Lambda \coloneqq  \Lambda \cup \Gamma \cup \phi_R(\Lambda),$ and
 consider for $v \in W^{1,2}_\Sigma(\Lambda)$ the reflected function $\hat v$ on $\widehat \Lambda$ with
 \[ \hat v (\y)\coloneqq \begin{cases} v(\y) &\text{if } \y \in \Lambda, \\ v(R\y) \quad &\text{if } R\y \in \Lambda.
 \end{cases}
 \]
 Then the following holds:
 \begin{enumerate}[(a)] 
  \item \label{p-spiegel:i} If $v \in W^{1,2}_\Sigma(\Lambda)$, then $\hat v \in
    W^{1,2}_0(\widehat \Lambda)$.
  \item \label{p-spiegel:ii}
  Consider  $\Phi_2$ defined as in Proposition~\ref{p-transform} for $\phi = \phi_R$. Let $f \in W^{-1,2}_\Sigma(\Lambda)$ and set \[ \langle \hat f,\psi \rangle \coloneqq \langle f,\psi|_\Lambda \rangle + 
\langle \Phi_2^*f,\psi|_{\phi_R(\Lambda)}\rangle, \qquad \psi \in C_c^\infty(\widehat \Lambda).
\]
 Then  $f
\mapsto \hat f$ is continuous from $W^{-1,p}_\Sigma(\Lambda)$ to $W^{-1,p}_0(\widehat\Lambda)$
for every $p \geq 2$. 
\item \label{p-spiegel:iii}  Let $\eta \colon \Lambda \to \R^{d\times d}$. Define the reflected coefficient function $\hat {\eta} $ on $\widehat \Lambda$ by
  \[ \hat {\eta} (\y)\coloneqq \begin{cases} \eta(\y)  &\text{if } \y \in \Lambda, \\ R\,
      \eta(R\y)\, R \quad &\text{if } R\y \in \Lambda. \end{cases} 
  \] Let $v$ and $f$ as before. Then we have
  \begin{equation*}
    -\nabla \cdot \eta \nabla v = f \qquad \implies \qquad -\nabla \cdot \hat \eta \nabla \hat v =\hat f.
  \end{equation*}
\end{enumerate}
\end{lemma}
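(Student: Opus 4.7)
The plan is to dispatch the three parts in order, with nearly all of the real work going into the reflection algebra in part (c). Part (a) is a classical density argument, part (b) follows from duality and Proposition~\ref{p-transform}, and part (c) requires a careful change-of-variables computation that exploits $R^2 = I$ and $R^TR = I$ to match the reflected coefficient $\hat\eta$, the reflected solution $\hat v$, and the reflected functional $\hat f$.

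For part (a), I would pick an approximating sequence $v_n \in C^\infty(\R^d)$ with $\supp(v_n)\cap\Sigma = \emptyset$ and $v_n \to v$ in $W^{1,2}(\Lambda)$; such a sequence exists by the very definition of $W^{1,2}_\Sigma(\Lambda)$ as a closure. The reflected function $\hat v_n$ is continuous across $\Gamma$ (values match by construction) and piecewise smooth, so it lies in $W^{1,2}(\widehat\Lambda)$; its support avoids $\Sigma \cup \phi_R(\Sigma)$, which together with the fact that $\Gamma$ sits in the relative interior of $\partial\Lambda \cap [x_d=0]$ gives $\hat v_n \in W^{1,2}_0(\widehat\Lambda)$. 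Because reflection roughly doubles $L^2$- and gradient-norms, the map $v \mapsto \hat v$ is bounded, and one passes to the limit.

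For part (b), I would split $\hat f = \hat f_1 + \hat f_2$ along the two summands in the definition. Each piece is the composition of a bounded restriction operator $W^{1,p'}_0(\widehat\Lambda) \to W^{1,p'}_\Sigma(\Lambda)$ (respectively to $W^{1,p'}_{\phi_R(\Sigma)}(\phi_R(\Lambda))$) with $f$ or with $\Phi_2^* f$. Proposition~\ref{p-transform}\ref{p-transform:i} and duality yield continuity of $\Phi_2^*\colon W^{-1,p}_\Sigma(\Lambda) \to W^{-1,p}_{\phi_R(\Sigma)}(\phi_R(\Lambda))$. The restriction estimates rest on the observation that any $\psi \in C_c^\infty(\widehat\Lambda)$ vanishes in a neighborhood of $\Sigma \subset \partial\widehat\Lambda$, so $\psi|_\Lambda$ (resp.\ $\psi|_{\phi_R(\Lambda)}$) lands in the appropriate Sobolev space with partially vanishing trace.

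Part (c) is where the actual work sits. For $\psi \in C_c^\infty(\widehat\Lambda)$ I would expand
\begin{equation*}
\bigl\langle -\nabla\cdot\hat\eta\nabla\hat v,\psi \bigr\rangle = \int_\Lambda \eta\,\nabla v\cdot\nabla\overline{\psi|_\Lambda} + \int_{\phi_R(\Lambda)} \hat\eta\,\nabla\hat v\cdot\nabla\overline\psi.
\end{equation*}
The first summand is immediately $\langle f, \psi|_\Lambda\rangle$ by the defining equation for $v$. For the second, substitute $y = Rx$; the chain rule gives $(\nabla\hat v)(Rx) = R\,\nabla v(x)$, by definition $\hat\eta(Rx) = R\,\eta(x)\,R$, and $(\nabla\overline\psi)(Rx) = R\,\nabla\overline{\Phi_2(\psi|_{\phi_R(\Lambda)})}(x)$ where $\Phi_2(\psi|_{\phi_R(\Lambda)})(x) = \psi(Rx)$. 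Using $R^2 = I$ to collapse the inner $R$'s on $\hat v$ and $\hat\eta$, and then $R^TR = I$ to collapse the outer pair on the two gradients, the integrand reduces to $\eta(x)\,\nabla v(x)\cdot\nabla\overline{\Phi_2(\psi|_{\phi_R(\Lambda)})}(x)$. The second integral thus becomes $\langle f, \Phi_2(\psi|_{\phi_R(\Lambda)})\rangle = \langle \Phi_2^* f, \psi|_{\phi_R(\Lambda)}\rangle$, and adding the two summands gives exactly $\langle \hat f, \psi\rangle$. The main obstacle will be this matrix bookkeeping, combined with the small but essential sanity check that $\psi|_\Lambda$ and $\Phi_2(\psi|_{\phi_R(\Lambda)})$ both genuinely belong to $W^{1,2}_\Sigma(\Lambda)$ so that the pairing with $f$ is legitimate; both again reduce to the support structure of $C^\infty_c(\widehat\Lambda)$ relative to $\Sigma$.
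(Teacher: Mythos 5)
Your proof is correct and follows essentially the same approach as the paper's: part~\ref{p-spiegel:i} by approximation with smooth functions vanishing near $\Sigma$ followed by reflection, and parts~\ref{p-spiegel:ii} and~\ref{p-spiegel:iii} by duality and the change of variables $\y = R\x$ together with $R^2 = I$ and $R^{T}R = I$ (the paper simply declares these two parts ``straightforward''; your computation is the intended one). The only place where you gloss over a point the paper spells out is in part~\ref{p-spiegel:i}: the reflected approximants $\hat v_n$ are not smooth (the gradient may jump across $\Gamma$), so the statement $\hat v_n \in W^{1,2}_0(\widehat\Lambda)$ is not simply ``smooth compactly supported''; one must invoke the standard fact that a $W^{1,2}$ function with compact support in an open set lies in $W^{1,2}_0$, which the paper realizes by explicitly mollifying the reflected approximants so that their supports remain bounded away from $\partial\widehat\Lambda$.
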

\begin{proof}
  In order to prove~\ref{p-spiegel:i}, note first that---thanks to the special geometric
  constellation---every $\psi \in C_\Sigma^\infty(\Lambda)$ can be extended by zero to the whole
  half space $H_-\coloneqq [x_d <0]$, resulting in a function in
  $W^{1,2}(H_-)$. By the density of $C_\Sigma^\infty(\Lambda)$ in $W^{1,2}_\Sigma(\Lambda)$ it
  follows that this extending procedure provides an isometry $E_0$ from
  $W^{1,2}_\Sigma(\Lambda)$ into $W^{1,2}(H_-)$.  Now let $v \in W^{1,2}_\Sigma(\Lambda)$. We
  consider $E_0 v$ and reflect this function across the boundary of $H_-$ to obtain a function
  $v_\pm \in W^{1,2}(\R^d)$ on all of $\R^d$ that satisfies
\begin{equation*}
\|v_\pm \|_{W^{1,2}(\R^d)} = 2\|E_0 v \|_{W^{1,2}(H_-)} = 2 \|v\|_{W^{1,2}_\Sigma(\Lambda)}.
\end{equation*}
This is easily verified by direct
calculations. So, summing up, the mapping 
\[
v \mapsto E_0 v \mapsto v_\pm \mapsto v_\pm|_{\widehat \Lambda} = \hat v
\]
is continuous from $W^{1,2}_\Sigma(\Lambda)$ to $W^{1,2}(\widehat \Lambda)$. It remains to show that indeed $\hat v \in W^{1,2}_0(\widehat\Lambda)$. To this end, let
$(v_k) \subset C^\infty_\Sigma(\Lambda)$ be an approximating sequence for $v$ in
$W^{1,2}_\Sigma(\Lambda)$. Note that it is clear that $(v_k)_\pm|_{\widehat \Lambda}$
approximates $\hat v$ in $W^{1,2}(\widehat \Lambda)$ and the supports of
$(v_k)_\pm|_{\widehat \Lambda}$ have a positive distance to $\partial \widehat \Lambda$, but
the functions are not smooth any more in general. But this can be rectified by mollifying each
$(v_k)_\pm|_{\widehat \Lambda}$ with a suitable regularizing kernel such that the resulting
smooth functions' supports still have a positive distance to $\partial \widehat \Lambda$, and it
is easily shown that these functions still approximate $\hat v$ in $W^{1,2}(\widehat
\Lambda)$, so $\hat v \in W^{1,2}_0(\widehat\Lambda)$.

The proof of~\ref{p-spiegel:ii} and~\ref{p-spiegel:iii} is concluded from a straightforward
calculation and application
of the definitions of the operators $-\nabla \cdot  \mu \nabla$ and $-\nabla \cdot \hat \mu
\nabla$ together with Proposition~\ref{p-transform}.
\end{proof}

\begin{rem}
  \label{rem:uniform-procedure}
  From the proofs of the foregoing framework for localization, transformation and reflection it
  is easily seen that each step preserves uniform bounds in the data of an elliptic equation, that
  is, the underlying geometry, the right-hand side, and the coefficient function. In this sense,
  whenever a result on elliptic regularity on the localized, transformed or reflected level
  yields a uniform estimate on the solution in the aforementioned data, this uniform estimate
  carries over to the original situation immediately. Of course, this is exactly the case for
  our main tool, Proposition~\ref{p-lady}.
\end{rem}

%%%%%%%%%%%%%%%%%%%%%%%%%%%%%%%%%%%%%%%%%%%%%%%%%%%%%%%%%%%%%%%%%%%%%%%%%%%%%
\subsection {Proof of Theorem~\ref{t-result}}\label{sec:proof-theorem-reft}
We now start the proof of the H\"older continuity following the program sketched in the
preceding subsection, cf.\@ page~\pageref{p-Ablaufplan}. According to the hypotheses of
Theorem~\ref{t-result}, from now on we suppose that $\Omega$ and $D$ satisfy the
Assumptions~\ref{a-general}~\ref{a-general-1} and, if $d=4$, also~\ref{a-general-2}, as well as (always) Assumption~\ref{a-Interface}.

In order to start the localisation procedure, we fix some notation. For the Neumann boundary part we use again the shorthand $N = \partial \Omega \setminus D$. Now, based on Assumption~\ref{a-general}~\ref{a-general-1}, choose for every $\x \in \overline{N}$ an associated open neighbourhood $V_\x$ and let $\{ V_{\x_1},\ldots, V_{\x_m} \}$ be a finite subcovering of $\overline{N}$. 

Furthermore, choose a bounded open neighbourhood $W$ of $\overline \Omega$ and put $U_0
\coloneqq  W \setminus \overline{N}$. Then $U_0$ is open and one has 
\[ U_0 \cap \Omega =\Omega \quad\text { and } \quad U_0 \cap \overline{N} =\emptyset.
\]
The system $\mathcal U \coloneqq  \{U_0, V_{\x_1}, V_{\x_2}, \dots V_{\x_m}\}$ forms an open
covering of $\overline \Omega$. Moreover, all sets in $\mathcal U$ give rise to extension
domains; this will come in handy in view of Lemma~\ref{e-abschfbullet}:
\begin{lemma} \label{l-extendprojected}
Let $U \in \mathcal U$ and put $\Omega_\bullet\coloneqq {\Omega \cap U}$ and $D_\bullet
\coloneqq  \partial \Omega_\bullet \setminus N$. Then for all  $r \in (1,\infty)$ the space $W^{1,r}_{D_\bullet}(\Omega_\bullet)$ admits again the continuation property, i.e., there is a continuous extension operator $\mathfrak E_U \colon W^{1,r}_{D_\bullet}(\Omega_\bullet) \to W^{1,r}(\R^d)$.
\end{lemma}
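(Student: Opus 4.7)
The plan is to split along the covering $\mathcal{U}$: for $U = U_0$ we invoke Proposition~\ref{p-2} essentially verbatim, while for $U = V_{\x_i}$ we exploit the bi-Lipschitz chart $\phi_{\x_i}$ to transport the problem to a standard Lipschitz model set (the half-cube), extend there, and transport back. The central observation that keeps the argument simple is that the target space $W^{1,r}(\R^d)$ carries no trace condition, so we may safely forget about the Dirichlet part $D_\bullet$ and use only the obvious continuous inclusion $W^{1,r}_{D_\bullet}(\Omega_\bullet) \hookrightarrow W^{1,r}(\Omega_\bullet)$.

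For $U = U_0$, one has $U_0 \cap \Omega = \Omega$ and, since $U_0 \cap \overline{N} = \emptyset$, also $D_\bullet = \partial\Omega \setminus N = D$. Thus $W^{1,r}_{D_\bullet}(\Omega_\bullet) = W^{1,r}_D(\Omega)$, and Proposition~\ref{p-2} directly yields a continuous extension into $W^{1,r}_D(\R^d) \hookrightarrow W^{1,r}(\R^d)$ for every $r \in (1,\infty)$.

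For $U = V_{\x_i}$, we abbreviate $\phi \coloneqq \phi_{\x_i}$; this is bi-Lipschitz on an open neighbourhood $\mathcal{W}$ of $\overline{V_{\x_i}}$ and satisfies $\phi(\Omega_\bullet) = H \coloneqq (-1,1)^{d-1} \times (-1,0)$. Since $H$ is a Lipschitz (in fact polyhedral) domain, a classical Sobolev extension operator $E_H \colon W^{1,r}(H) \to W^{1,r}(\R^d)$ is available for every $r \in (1,\infty)$. Given $u \in W^{1,r}_{D_\bullet}(\Omega_\bullet) \hookrightarrow W^{1,r}(\Omega_\bullet)$, we first pull back via $\phi$ to $\tilde u \coloneqq u \circ \phi^{-1} \in W^{1,r}(H)$ (using Proposition~\ref{p-transform}~\ref{p-transform:i} with empty Dirichlet portion), then extend to $\tilde U \coloneqq E_H \tilde u \in W^{1,r}(\R^d)$, and finally push forward by $\phi$ to $\tilde U \circ \phi \in W^{1,r}(\mathcal{W})$, again by Proposition~\ref{p-transform}~\ref{p-transform:i}.

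The only point requiring care is that $\phi$---and hence the composition $\tilde U \circ \phi$---is defined only on $\mathcal{W}$ and not on all of $\R^d$. To remedy this, we pick a cutoff $\chi \in C_c^\infty(\mathcal{W})$ with $\chi \equiv 1$ on $\overline{V_{\x_i}}$ and set $\mathfrak{E}_U u \coloneqq \chi \cdot (\tilde U \circ \phi)$, extended by zero outside $\mathcal{W}$. The resulting function lies in $W^{1,r}(\R^d)$ and coincides with $u$ on $\Omega_\bullet \subseteq V_{\x_i}$ since $\chi \equiv 1$ there. Continuity of each of the constituent steps---inclusion, pull-back, bounded extension, push-forward, multiplication by $\chi$ and zero-extension---then yields the continuity of $\mathfrak{E}_U$, completing the proof.
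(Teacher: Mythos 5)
Your proof is correct and, for the chart patches $V_{\x_i}$, it rests on the same idea as the paper: exploit that $\Omega_\bullet = \Omega \cap V_{\x_i}$ is bi-Lipschitz equivalent to the Lipschitz half-cube, and that the target $W^{1,r}(\R^d)$ carries no trace condition so the Dirichlet part $D_\bullet$ may be discarded. You render the pull-back/extend/push-forward/cut-off construction explicit, whereas the paper simply observes that $\Omega_\bullet$ is therefore a Lipschitz domain and invokes Proposition~\ref{p-2} with $D = \emptyset$; these amount to the same argument. In the $U = U_0$ case your route diverges from the paper's in a way worth flagging: with $\Omega_\bullet = \Omega \cap U_0 = \Omega$ and $D_\bullet = \partial\Omega_\bullet \setminus N = D$, one has $D_\bullet = D$, which is in general a \emph{proper} subset of $\partial\Omega_\bullet$, so the zero-extension argument the paper gestures at (based on $D_\bullet = \partial\Omega_\bullet$) is not available in general; your identification $D_\bullet = D$ together with a direct appeal to Proposition~\ref{p-2} is the more careful argument and covers the general situation. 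So both cases of your proof go through, and the $U_0$ case is, if anything, handled more accurately than in the paper.
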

\begin{proof}
In the case $U = U_0$ one has $D_\bullet = \partial \Omega_\bullet$ by construction. Thus, $W^{1,r}_{D_\bullet}(\Omega_\bullet) = W^{1,r}_0(\Omega_\bullet)$ and the trivial extension by zero does the trick even without any condition on the boundary. If $U = V_{\x_j}$, then $\Omega_\bullet = {\Omega \cap V_{\x_j}}$ is mapped onto the lower half cube $\{ \x \in (-1,1)^d \colon x_d < 0\}$ by the bi-Lipschitz map $\phi_{\x_j}$ that is defined on a neighbourhood of 
$\overline {\Omega_\bullet}$, cf.~Assumption~\ref{a-general}~\ref{a-general-1}. The lower half cube is a Lipschitz domain. Thus, $\Omega_\bullet$ is also a Lipschitz domain and there is even an extension operator from $W^{1,r}(\Omega_\bullet)$ into $W^{1,r}(\R^d)$ thanks to Proposition~\ref{p-2}. (Choose there $\Omega = \Omega_\bullet$ and $D = \emptyset$.)
\end{proof}

Corresponding to the open covering $\mathcal U$ of $\overline{\Omega}$ we choose a smooth partition of
unity $\{\eta_0,\eta_1,\ldots,\eta_m\} \subset C_c^\infty(\R^d)$ such that $\supp(\eta_0) \subseteq U_0$ and
$\supp(\eta_j) \subseteq V_{\x_j}$ for $j \in \{1,\ldots , m\}$.

Let from now on $q>d$ be fixed, let $f \in W^{-1,q}_D(\Omega)$, and let $v \in
W^{1,2}_D(\Omega)$ be the solution to~\eqref{e-euq}, so $(\cA + 1)v = f$. We write $v = \sum_{j=0}^m \eta_j v$ and aim to show the H\"older continuity of every function $\eta_j v$ seperately. The easiest case is $j=0$:
\begin{lemma}\label{lem:localize-interior}
 There exists an $\alpha_0 > 0$ independent of $f$ such that $\eta_0 v \in C^{\alpha_0}(\Omega)$ and the estimate
 \[ \|\eta_0 v\|_{C^{\alpha_0}(\Omega)} \le c \|f\|_{W^{-1,q}_D(\Omega)}
 \]
 holds true. The constant $c$ depends only on geometry and the $L^\infty(\Omega)$-bound and ellipticity constant of the given coefficient
  function $\mu$.
\end{lemma}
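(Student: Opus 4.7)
The plan is to apply Proposition~\ref{p-lady}~(b) to $u := \eta_0 v$ after localizing with the cut-off $\eta_0$ and the open set $U_0$. Since $\supp \eta_0 \subset U_0$ and $U_0 \cap \overline N = \emptyset$, the induced boundary partition on the localized set $\Omega_\bullet := U_0 \cap \Omega$ is purely Dirichlet: $D_\bullet = \partial \Omega_\bullet \setminus N = \partial \Omega_\bullet$ in the sense of Lemma~\ref{l-extendprojected}. Lemma~\ref{l-restr/ext}~\ref{l-restr/ext:enum-i} then gives $u \in W^{1,2}_0(\Omega_\bullet)$, and Lemma~\ref{l-project} identifies $u$ as the unique solution of
\[
-\nabla \cdot \mu_\bullet \nabla u = f^\bullet \qquad \text{in } W^{-1,2}_0(\Omega_\bullet).
\]

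Next I would upgrade the integrability of the right-hand side. The hypothesis of Lemma~\ref{e-abschfbullet} on the continuation property of $W^{1,r}_0(\Omega_\bullet)$ is supplied by the trivial zero-extension operator from Lemma~\ref{l-extendprojected}; hence some $p > d$ can be found with $f^\bullet \in W^{-1,p}_0(\Omega_\bullet)$ and
\[
\|f^\bullet\|_{W^{-1,p}_0(\Omega_\bullet)} \le c\,\|f\|_{W^{-1,q}_D(\Omega)}.
\]
Via Remark~\ref{r-interpret} this means that $f^\bullet$ can be represented in the form $f_0 + \sum_{j=1}^d \partial_j f_j$ with the $L^p(\Omega_\bullet)$-norms of the $f_j$ controlled by the same right-hand side, which is precisely the shape of the data required by Proposition~\ref{p-lady}.

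To invoke Proposition~\ref{p-lady}~(b) it remains to secure the condition $(A_\gamma)$ on the underlying domain. Since $\supp u$ has positive distance to $\overline N$, I would, without loss of generality, first shrink $U_0$ to a bounded open set $\tilde U_0$ with smooth boundary satisfying $\supp \eta_0 \cap \overline\Omega \subset \tilde U_0 \subset \overline{\tilde U_0} \subset U_0$ and work with $\tilde \Omega_\bullet := \tilde U_0 \cap \Omega$ in place of $\Omega_\bullet$ throughout the previous two paragraphs. Then $\tilde \Omega_\bullet$ enjoys the $(A_{\tilde\gamma})$ property for some $\tilde\gamma > 0$: at points of $\partial \tilde U_0 \cap \overline\Omega$ by smoothness of $\tilde U_0$, and at points of $D \cap \overline{\tilde U_0}$ directly from Assumption~\ref{a-Interface}~\ref{a-Interface:i}. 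Proposition~\ref{p-lady}~(b) then produces an exponent $\alpha_0 > 0$ and a constant $c$ with
\[
\|u\|_{C^{\alpha_0}(\tilde \Omega_\bullet)} \le c\,\|f\|_{W^{-1,q}_D(\Omega)}.
\]
Using that $u$ has vanishing H\"older-continuous boundary values on $\partial \tilde \Omega_\bullet$ and that $\eta_0 v$ is precisely the zero extension of $u$ from $\tilde \Omega_\bullet$ to $\Omega$, this estimate transfers to $\eta_0 v \in C^{\alpha_0}(\Omega)$, proving the claim. The uniformity of the constant in the geometry and in the $L^\infty$-bound and ellipticity constant of $\mu$ is inherited from Proposition~\ref{p-lady} and Lemma~\ref{e-abschfbullet} through the localization procedure, cf.~Remark~\ref{rem:uniform-procedure}.

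The main technical obstacle I anticipate is the careful composition of the two $(A_{\tilde\gamma})$ contributions at points of $\partial \tilde U_0 \cap D$, where one needs to check that the measure-density properties coming from $\partial \tilde U_0$ and from $D$ combine to a bona fide global $(A_{\tilde\gamma})$ for $\tilde \Omega_\bullet$. This is elementary but requires a case distinction on the relative positions of $\partial \tilde U_0$ and $\partial \Omega$; once handled, the remainder of the proof is a direct chain of the lemmas recorded above.
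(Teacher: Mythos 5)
Your proof is correct and follows the same essential route as the paper: localize with $\eta_0$ and $U_0$, recognize a pure Dirichlet problem because $\supp\eta_0$ is disjoint from $\overline N$, upgrade the right-hand side to $W^{-1,p}$ with some $p>d$ via Lemma~\ref{e-abschfbullet} together with the trivial zero-extension operator from Lemma~\ref{l-extendprojected}, and conclude from Proposition~\ref{p-lady} and Remark~\ref{r-interpret}. The one genuine deviation is your additional shrinking of $U_0$ to a smooth open set $\tilde U_0$ and application of Proposition~\ref{p-lady} on $\tilde U_0\cap\Omega$; the paper applies Proposition~\ref{p-lady} directly with $\Lambda=\Omega$, invoking only Assumption~\ref{a-Interface}~\ref{a-Interface:i} since $\eta_0 v$ vanishes in a neighbourhood of $\overline N$, so only the Dirichlet part $D$ of the boundary is actually seen (alternatively: $\Omega$ is globally of class $(A_\gamma)$ anyway, being $(A_\gamma)$ at $D$ by Assumption~\ref{a-Interface}~\ref{a-Interface:i} and near $\overline N$ by pushing the $(A_{1/2})$ property of the lower half-cube back through the bi-Lipschitz charts, cf.\ Lemma~\ref{l-erhaeltsich}). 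Your shrinking step trades this observation against an extra zero-extension argument at the end; both routes are sound.

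One remark: the ``main technical obstacle'' you flag at the end is not actually there. For any $\x\in\partial(\tilde U_0\cap\Omega)$ and any $r\in(0,1]$ one has both $B_r(\x)\setminus(\tilde U_0\cap\Omega)\supseteq B_r(\x)\setminus\tilde U_0$ and $B_r(\x)\setminus(\tilde U_0\cap\Omega)\supseteq B_r(\x)\setminus\Omega$. Moreover $\partial(\tilde U_0\cap\Omega)\subseteq(\partial\tilde U_0\cap\overline\Omega)\cup(D\cap\overline{\tilde U_0})$ since $\overline{\tilde U_0}\cap\overline N=\emptyset$. Hence at every such $\x$ one of the two inclusions supplies the measure-density bound directly, with the constant coming either from the smoothness of $\tilde U_0$ or from Assumption~\ref{a-Interface}~\ref{a-Interface:i}; no case distinction on the relative positions of $\partial\tilde U_0$ and $D$ is needed.
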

\begin{proof}
Since $\overline N$ does not intersect $U_0$, the function $\eta_0 v$ belongs to
$W^{1,2}_0(\Omega)$, cf.\@ Lemma~\ref{l-restr/ext}. Moreover, by Lemma~\ref{l-project} there is
a $p >d$ and $f_0 \in W^{-1,p}(\Omega)$ such that the function $\eta_0v$ satisfies the equation
\begin{equation} \label{e-prosubgeb}
-\nabla \cdot \mu \nabla (\eta_0v) =f_0.
\end{equation}
Since we are now in the setting of a pure Dirichlet problem and have
Assumption~\ref{a-Interface}~\ref{a-Interface:i} at our disposal, we can apply
Proposition~\ref{p-lady}; note also Remark~\ref{r-interpret}. This yields that the solution
$\eta_0 v$ of \eqref{e-prosubgeb} is H\"olderian of some degree $\alpha_0$ with the estimate
\begin{align*}
 \|\eta_0 v\|_{C^{\alpha_0}(\Omega)} &\le c \|f_0\|_{W^{-1,p}(\Omega)}.
 \intertext{Finally, combining Lemma~\ref{l-extendprojected} and
Lemma~\ref{e-abschfbullet}, we conclude that}
\|\eta_0 v\|_{C^{\alpha_0}(\Omega)} &\le  c \|f\|_{W^{-1,q}_D(\Omega)},
\end{align*}
where $\alpha_0$ does not depend on $f$. For the uniformity claim, see Remark~\ref{rem:uniform-procedure}.
\end{proof}
We turn to the H\"older continuity of the functions $\eta_j v$ for $j \in \{1,\ldots,m\}$. For
these, there will be a part of the Neumann boundary $N$ present. To make do with this,
we transform the localized problems via the diffeomorphisms $\phi_{\x_j}$ to the model
constellation on the unit cube as in Assumption~\ref{a-general}~\ref{a-general-1}, which enables
us to use a
reflection argument to end up in a situation with a pure Dirichlet boundary condition. Then we can conclude by Proposition~\ref{p-lady}. For this we introduce the notation $Q \coloneqq  (-1,1)^d$ for the unit cube, $Q_- \coloneqq  \{ \x \in Q \colon x_d < 0 \}$ for its lower half and $P \coloneqq  \{\x \in Q \colon x_d = 0\}$ for its midplate.

Due to Lemma~\ref{l-project}, there is $p>d$ such that each of the functions $\eta_j v$, $j = 1, \dots, m$ satisfies an equation like 
\begin{equation} \label{e-prosubgeb1}
-\nabla \cdot \mu \nabla (\eta_jv) =
f_j \in W_{D_j}^{-1,p}(\Omega \cap V_{\x_j}),
\end{equation}
with $D_j = \partial (\Omega \cap V_{\x_j}) \setminus N$. Note that the right hand sides $f_j$ continuously depend on $f$, see Lemma~\ref{e-abschfbullet}. According to Proposition~\ref{p-transform}, one may transform these equations under the bi-Lipschitz diffeomorphisms $\phi_{\x_j}$ and pass to the equation 
\begin{equation} \label{e-prosubgeb2}
 -\nabla \cdot \mu_j^\# \nabla w_j = g_j \in W_{\Sigma_j}^{-1,p}(Q_-),
\end{equation}
where $\Sigma_j =\phi_{\x_j}(D_j) \subseteq \partial Q_-$ is the transformed Dirichlet part,
$w_j \in W^{1,2}_{\Sigma_j}(Q_-)$ is the transformed version of the function $\eta_j v|_{\Omega \cap V_{\x_j}}$ and $g_j$ is
the transformation of $f_j$. Note that the whole 'lower mantle' boundary $\partial Q_- \setminus P$ belongs to $\Sigma_j$, since $\phi_{\x_j}(N \cap V_{\x_j}) \subseteq P$.

From now on we  distinguish whether $\x_j \in N$ or $\x_j \in D \cap \overline{N}$, starting
with the former.
\begin{lemma}\label{lem:localize-N}
 Let $j \in \{1,2, \dots, m\}$ with $\x_j \in N$. Then there is some $\alpha_j > 0$ independent of $f$ such that $\eta_j v \in C^{\alpha_j}(\Omega)$ and we have
 \[ \|\eta_j v\|_{C^{\alpha_j}(\Omega)} \le c \|f\|_{W^{-1,q}_D(\Omega)}.
 \]
 The constant $c$ depends only on geometry and on the $L^\infty(\Omega)$-bound and ellipticity constant of the given coefficient
  function $\mu$.
\end{lemma}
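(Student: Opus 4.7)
The strategy is to exploit that $\mathrm{x}_j \in N$ to put the problem into a pure Dirichlet configuration via reflection, and then invoke Proposition~\ref{p-lady} on the cube.

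First, since $\mathrm{x}_j \in N$, Remark~\ref{r-shrink} allows us to assume that $\phi_{\mathrm{x}_j}(N \cap V_{\mathrm{x}_j}) = P$ equals the full midplate of $Q$. Consequently, in the transformed equation~\eqref{e-prosubgeb2}, the Dirichlet portion $\Sigma_j = \phi_{\mathrm{x}_j}(D_j)$ is contained in the lower mantle $\partial Q_- \setminus P$, and in particular disjoint from $P$. This is precisely the geometric configuration required by Lemma~\ref{p-spiegel}.

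Next, I would apply Lemma~\ref{p-spiegel} with $\Lambda = Q_-$, $\Gamma = P$, $\Sigma = \Sigma_j$ and $\widehat{\Lambda} = Q$. Parts~\ref{p-spiegel:i}--\ref{p-spiegel:iii} of that lemma yield a reflected function $\hat w_j \in W^{1,2}_0(Q)$ solving
\[
  -\nabla \cdot \widehat{\mu_j^\#}\, \nabla \hat w_j = \hat g_j \qquad \text{in } W^{-1,p}_0(Q),
\]
with $\hat g_j$ depending continuously on $g_j \in W^{-1,p}_{\Sigma_j}(Q_-)$ and $\widehat{\mu_j^\#}$ uniformly elliptic with the same ellipticity constant as $\mu_j^\#$ (the reflected matrix $R\,\mu_j^\#(R\,\cdot)\,R$ preserves ellipticity because $R$ is orthogonal). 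The cube $Q$ is convex, so it belongs to class $(A_{1/2})$ at every boundary point; hence, by Proposition~\ref{p-lady} and Remark~\ref{r-interpret}, there exists $\alpha_j \in (0,1)$ independent of $\hat g_j$ such that $\hat w_j \in C^{\alpha_j}(Q)$ with
\[
  \|\hat w_j\|_{C^{\alpha_j}(Q)} \leq c\, \|\hat g_j\|_{W^{-1,p}_0(Q)}.
\]

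Restricting to $Q_-$ recovers H\"older continuity of $w_j$, and Proposition~\ref{p-transform}~\ref{p-transform:i} transfers this regularity back under $\phi_{\mathrm{x}_j}^{-1}$ to the original localization: $\eta_j v \in C^{\alpha_j}(\Omega \cap V_{\mathrm{x}_j})$. Since $\eta_j$ is compactly supported in $V_{\mathrm{x}_j}$, the product $\eta_j v$ extends by zero to an element of $C^{\alpha_j}(\Omega)$. Finally, chaining together the continuity estimates of Lemma~\ref{l-project}, Lemma~\ref{e-abschfbullet} (whose hypotheses are met thanks to Lemma~\ref{l-extendprojected}), Proposition~\ref{p-transform}, Lemma~\ref{p-spiegel}, Proposition~\ref{p-lady} and the back-transformation gives the desired bound
\[
  \|\eta_j v\|_{C^{\alpha_j}(\Omega)} \leq c\, \|f\|_{W^{-1,q}_D(\Omega)}.
\]
The uniformity in geometry and the $L^\infty$-bound and ellipticity constant of $\mu$ follows from Remark~\ref{rem:uniform-procedure}. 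There is no serious obstacle here, since all the machinery has already been prepared; the only point demanding attention is verifying that the reflection preserves ellipticity and that the reflected geometry $Q$ still satisfies the hypothesis of Proposition~\ref{p-lady}, both of which are immediate.
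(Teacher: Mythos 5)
Your proposal matches the paper's proof essentially step for step: apply Remark~\ref{r-shrink} to reduce to the model geometry $\Sigma_j = \partial Q_- \setminus P$, reflect across the midplate via Lemma~\ref{p-spiegel}, invoke Proposition~\ref{p-lady} on the convex cube $Q$, transform back via Proposition~\ref{p-transform}, and chain the continuity estimates from Lemmas~\ref{l-project}, \ref{e-abschfbullet} and \ref{l-extendprojected}, with uniformity supplied by Remark~\ref{rem:uniform-procedure}. One minor point of precision: you state that $\Sigma_j$ is \emph{contained in} $\partial Q_- \setminus P$, but Lemma~\ref{p-spiegel} is stated for $\Sigma = \partial\Lambda \setminus \Gamma$ exactly; the reverse inclusion $\partial Q_- \setminus P \subseteq \Sigma_j$ holds as noted in the text preceding the lemma, so equality does hold and the argument is sound.
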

\begin{proof}
  Thanks to Remark~\ref{r-shrink} we can assume that
  $\Sigma_j = \partial Q_- \setminus P$. Thus, exploiting Lemma~\ref{p-spiegel} with
  $\Lambda = Q_-$ and $\Gamma = P$, the symmetrically reflected function $\widehat{w_j}$ belongs
  to the space $W^{1,2}_0(Q)$ and obeys an elliptic equation on the cube $Q$ with the right hand
  side  $\widehat{g_j} \in W^{-1,p}(Q)$. The cube $Q$ is obviously convex and satisfies the
  regularity condition in Proposition~\ref{p-lady} with $\gamma = 1/2$. Thus, said
  Proposition~\ref{p-lady} applies and gives us H\"older continuity of $\widehat{w_j}$ of
  degree, say, $\alpha_j$, with an
  estimate in $\widehat{g_j} \in W^{-1,p}(Q)$. By Proposition~\ref{p-transform} and
  Lemma~\ref{p-spiegel} we then have
  \begin{multline*}
    \|\eta_j v\|_{C^{\alpha_j}(\Omega \cap V_{\x_j})} \leq c\|w_j\|_{C^{\alpha_j}(Q_-)} \leq
    c\|\widehat{w_j}\|_{C^{\alpha_j}(Q)} \\ \leq c\|\widehat{g_j}\|_{W^{-1,p}(Q)} \leq c\|g_j\|_{W_{\Sigma_j}^{-1,p}(Q_-)} \leq c\|f_j\|_{W_{D_j}^{-1,p}(\Omega \cap V_{\x_j})} \leq c\|f\|_{W^{-1,q}_D(\Omega)}.
  \end{multline*}
  %
  % This inherits to the functions $w_j$
  % (Lemma~\ref{p-spiegel}) and $\eta_j v|_{\Omega \cap V_{\x_j}}$
  % (Proposition~\ref{p-transform}), with an estimate ultimately in $f \in W^{-1,q}_D(\Omega)$.
  Since the
  support of $\eta_j$ has a positive distance to $\Omega \setminus V_{\x_j}$, the $\alpha_j$-H\"older
  continuity and norm estimate is preserved for $\eta_j v$ on the whole $\Omega$. For the
  uniformity claim, see again Remark~\ref{rem:uniform-procedure}.
\end{proof}
It remains to treat the patches with $\x_j \in D \cap \overline{N}$ and it is here that
Assumption~\ref{a-Interface}~\ref{a-Interface:ii} comes into play. In order to reformulate this
condition in our current notation, for some set $M \subseteq \partial Q_-$, we denote its
relative boundary inside $\partial Q_-$ by $\mathrm{bd}_{\partial Q_-}(M)$ and inside $P$ by
$\mathrm{bd}_P(M)$. Then Assumption~\ref{a-Interface}~\ref{a-Interface:ii} reads as follows: There are two constants $c_0 \in
(0,1)$ and $c_1 > 0$, such that for all $(\y,0) \in  \mathrm{bd}_P(\Sigma_j)$ and all $s \in
(0,1]$ we have
\begin{equation} \label{e-1}
 \lambda_{d-1} \bigl( \bigl\{ \z \in \lowdim{B}_s(\y) \colon \dist (\z, P \setminus \Sigma_j) > c_0 s \bigr\} \bigr) \ge c_1 s^{d-1}.
\end{equation}
Later on it will be convenient to have this condition not only for the points in the interface $\mathrm{bd}_P(\Sigma_j)$, but for all points of $\Sigma_j$ inside $P$. It is an interesting fact that this comes for free, once we suppose it on the interface. This will be elaborated in the next two lemmas.
\begin{lemma} \label{l-extendfact}
Condition \eqref{e-1} carries over to all points $(\y,0)
\in \mathrm{bd}_{\partial Q_-}(\Sigma_j)$ with possibly different constants $c_0, c_1 >0$.
\end{lemma}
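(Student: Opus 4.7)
The plan is to pin down which points in $\mathrm{bd}_{\partial Q_-}(\Sigma_j)$ are not already covered by \eqref{e-1}, and treat these additional points via a purely geometric estimate on the unit cube.

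First I would sort out the topology. Since $P$ is relatively open in $\partial Q_-$, one immediately has $\mathrm{bd}_{\partial Q_-}(\Sigma_j) \cap P = \mathrm{bd}_P(\Sigma_j)$, so \eqref{e-1} already handles these points. The open mantle $\partial Q_- \setminus \overline{P}$ is contained in $\Sigma_j$ and is relatively open in $\partial Q_-$, hence lies in the relative interior of $\Sigma_j$. Consequently the extra points $\mathrm{bd}_{\partial Q_-}(\Sigma_j) \setminus P$ must sit on the edge $\partial([-1,1]^{d-1}) \times \{0\}$, and by definition they are accumulation points of $P \setminus \Sigma_j$.

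Next I would prove the measure bound uniformly at any such edge point $(\y_0,0)$, identifying $\y_0$ with its projection in $\partial([-1,1]^{d-1}) \subset \R^{d-1}$. Setting $I \coloneqq \{i : |y_{0,i}| = 1\}$ (nonempty) and $\sigma_i \coloneqq \sign(y_{0,i})$, one has the inclusion $[-1,1]^{d-1} \subseteq H_{\y_0} \coloneqq \{\z \in \R^{d-1} : \sigma_i z_i \le 1 \text{ for all } i \in I\}$, and therefore $\dist(\z, P \setminus \Sigma_j) \ge \dist(\z, H_{\y_0})$ for every $\z$. Since $H_{\y_0} - \y_0$ is a closed cone at the origin, the substitution $\z = \y_0 + s\hat\z$ together with the scale invariance of distances to cones reduces the task to a $\y_0$-independent bound
\[
\lambda_{d-1}\bigl(\bigl\{\hat\z \in \lowdim{B}_1(0) : \dist(\hat\z, H_{\y_0}-\y_0) > 1/2\bigr\}\bigr) \ge c(d) > 0.
\]
For any fixed $i_0 \in I$, the spherical cap $\{\hat\z \in \lowdim{B}_1(0) : \sigma_{i_0} \hat z_{i_0} > 1/2\}$ has $\lambda_{d-1}$-measure depending only on $d$, and since $H_{\y_0} - \y_0 \subseteq \{\z : \sigma_{i_0} z_{i_0} \le 0\}$, every point of the cap is at distance at least $1/2$ from $H_{\y_0} - \y_0$; this delivers the constant $c(d)$.

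Setting $c_0' \coloneqq \min(c_0, 1/2)$ and $c_1' \coloneqq \min(c_1, c(d))$ then yields \eqref{e-1} uniformly on all of $\mathrm{bd}_{\partial Q_-}(\Sigma_j)$ with these new constants. The main subtlety I expect is the topological identification of the extra points in the first step; once it is pinned down that they lie on the edge of the midplate, the cone estimate treats every such point uniformly, including corners, because $H_{\y_0} - \y_0$ is never all of $\R^{d-1}$ and depends on $\y_0$ only through a choice of coordinate half-spaces with controllable opening.
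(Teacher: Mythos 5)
Your proposal is correct and follows essentially the same route as the paper: identify the extra points as lying on $\mathrm{bd}_{\partial Q_-}(P)$, bound $\dist(\z, P \setminus \Sigma_j)$ from below by the distance to a fixed convex set containing $\overline{P}$, estimate the measure of the corresponding far-away region in $\lowdim{B}_s(\y)$ by a $\y$-independent constant times $s^{d-1}$, and finish by shrinking the constants. Your half-space/spherical-cap argument is a tidy, explicit variant of the paper's shorter estimate using $\dist(\z,P)$.
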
 
\begin{proof}
Since $\partial Q_- \setminus P \subseteq \Sigma_j$, we have the inclusion
\[ \mathrm{bd}_{\partial Q_-}(\Sigma_j) = \mathrm{bd}_{\partial Q_-}(\Sigma_j \cap P) \subseteq \mathrm{bd}_P(\Sigma_j) \cup \mathrm{bd}_{\partial Q_-}(P).
\]
For $(\y,0) \in \mathrm{bd}_{\partial Q_-}(P)$ we estimate
\begin{align*}
\lambda_{d-1} \Bigl(\Bigl\{ \z \in \lowdim{B}_s(\y) \colon \dist(\z, P \setminus \Sigma_j) >  \frac {s}{2} \Bigr\} \Bigr) &\ge \lambda_{d-1} \Bigl( \Bigl\{ \z \in \lowdim{B}_s(\y) \colon \dist(\z, P) >  \frac {s}{2} \Bigr\} \Bigr) \\
&\ge \frac {\omega_{d-1}}{2^{d-1}} s^{d-1}.
\end{align*}
So, \eqref{e-1} is true for all points $(\y,0)$ in $\mathrm{bd}_{Q_-}(P)$ and it is true for all $(\y,0)$ in $\mathrm{bd}_P(\Sigma_j)$ by hypotheses, with possibly different constants $c_0$ and $c_1$. In order to conclude, it suffices to observe the following: If for a point $\y$ and  a number $s>0$ the inequality~\eqref{e-1} holds, then this remains true if the constants $c_0, c_1$ are replaced
by smaller ones.
\end{proof}
\begin{lemma} \label{l-5.7}
 We have for all $(\y,0) \in \Sigma_j \cap P$ and all $s \in (0,1]$
 \[ \lambda_{d-1} \bigl(\bigl\{ \z \in \lowdim{B}_s(\y) \colon \dist(\z, P \setminus \Sigma_j) > {\hat c}_0 s \bigr\} \bigr) \ge {\hat c}_1 s^{d-1},
 \]
 for $\hat c_0 \coloneqq  \min\{\frac {1}{4}, \frac {c_0}{2}\}$ and $\hat c_1 \coloneqq 
 \min \{\frac { \omega_{d-1}}{4^{d-1}}, \frac {c_1}{2^{d-1}} \}$, where $c_0$ and $c_1$ are from Lemma~\ref{l-extendfact}.
\end{lemma}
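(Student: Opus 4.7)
The plan is to prove the lemma by a dichotomy on $\delta \coloneqq \dist(\y, P \setminus \Sigma_j)$ relative to the scale $s$. Intuitively, either $\y$ is already "deep" inside $\Sigma_j$ measured on scale $s$, in which case a smaller concentric ball works directly, or $\y$ is close to the relative boundary of $\Sigma_j$ in $P$, and we can cheaply reduce to the hypothesis by invoking \lemref{l-extendfact} at a nearby boundary point. In the degenerate case $P \setminus \Sigma_j = \emptyset$ one just bounds against the full ball $\lowdim{B}_s(\y)$.

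In the first case $\delta > s/2$, I would verify that the small ball $\lowdim{B}_{s/4}(\y)$ is entirely contained in the desired level set: for $\z \in \lowdim{B}_{s/4}(\y)$, the triangle inequality gives
\begin{equation*}
  \dist(\z, P \setminus \Sigma_j) \ge \delta - |\z-\y| > s/2 - s/4 = s/4 \ge \hat c_0 s,
\end{equation*}
since $\hat c_0 \le 1/4$ by definition. The measure of $\lowdim{B}_{s/4}(\y)$ is $\omega_{d-1}(s/4)^{d-1} \ge \hat c_1 s^{d-1}$, and we are done.

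In the second case $\delta \le s/2$, the key construction is as follows: pick $\y' \in P\setminus\Sigma_j$ with $|\y-\y'| \le s/2$ (or arbitrarily close to $\delta$). Since $P$ is convex, the segment $[\y,\y']$ lies in $P$; let $\y^*$ be its last point lying in $\overline{\Sigma_j}$. Then $\y^* \in \overline{\Sigma_j \cap P} \cap \overline{P \setminus \Sigma_j}$, so $(\y^*,0) \in \mathrm{bd}_P(\Sigma_j)$, and since $P \subseteq \partial Q_-$, in fact $(\y^*,0) \in \mathrm{bd}_{\partial Q_-}(\Sigma_j)$ (this is the only mildly delicate topological point, and it uses exactly the chain of inclusions featured in the proof of \lemref{l-extendfact}). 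Moreover $|\y - \y^*| \le s/2$, so by the triangle inequality $\lowdim{B}_{s/2}(\y^*) \subseteq \lowdim{B}_s(\y)$. I would then apply \lemref{l-extendfact} at $(\y^*,0)$ with radius $s/2 \in (0,1]$ to get
\begin{equation*}
  \lambda_{d-1}\bigl(\{\z \in \lowdim{B}_{s/2}(\y^*) : \dist(\z, P \setminus \Sigma_j) > c_0 s/2\}\bigr) \ge c_1 (s/2)^{d-1} = \tfrac{c_1}{2^{d-1}} s^{d-1}.
\end{equation*}
Since $\hat c_0 \le c_0/2$ and $\hat c_1 \le c_1/2^{d-1}$, this set is contained in the set we are trying to measure in $\lowdim{B}_s(\y)$, and the bound follows.

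The only real work is the topological identification $\y^* \in \mathrm{bd}_{\partial Q_-}(\Sigma_j)$, which justifies that we are legitimately applying \lemref{l-extendfact}; everything else is just a matched calibration of the constants $\hat c_0, \hat c_1$ against $1/4$ (from the first case) and $c_0/2$, $c_1/2^{d-1}$ (from the second), which is exactly what is written in the statement of the lemma.
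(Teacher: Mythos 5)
Your proof is correct and follows essentially the same strategy as the paper's: split on whether $\y$ is far from $P\setminus\Sigma_j$ relative to $s$ (where a concentric smaller ball is swallowed by the level set) or close (where you relocate to a nearby interface point and invoke Lemma~\ref{l-extendfact}). The paper splits into three cases in terms of $\eps \coloneqq \dist(\y, P\setminus\Sigma_j)$ and in the last one applies Lemma~\ref{l-extendfact} at the point $\y^*$ with the variable radius $s-\eps$, which forces a short calibration $2\eps < s \Rightarrow \frac{c_0}{2}s < c_0(s-\eps)$; your two-case version with the fixed radius $s/2$ streamlines this, and your segment-based construction of $\y^*$ is a valid alternative to the paper's appeal to a distance minimizer in the compact set $\overline{P\setminus\Sigma_j}$. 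One small imprecision: to pass from $\y^* \in \mathrm{bd}_P(\Sigma_j)$ to $\y^* \in \mathrm{bd}_{\partial Q_-}(\Sigma_j)$ what you actually need is that $P$ is relatively open in $\partial Q_-$ (so a neighbourhood basis of $\y^*$ in $P$ is already one in $\partial Q_-$); the inclusion $\mathrm{bd}_{\partial Q_-}(\Sigma_j) \subseteq \mathrm{bd}_P(\Sigma_j) \cup \mathrm{bd}_{\partial Q_-}(P)$ from the proof of Lemma~\ref{l-extendfact} goes in the opposite direction and would not by itself establish the membership you need.
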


\begin{proof}
For all $(\y,0) \in \mathrm{bd}_P(\Sigma_j)$ the assertion is true by Assumption~\ref{a-Interface}~\ref{a-Interface:ii} and, using again the observation made in the end of the proof of Lemma~\ref{l-extendfact}, it suffices to treat the case where $(\y,0)$ is a relatively inner point of $\Sigma_j$ in $P$. Since $\overline{P \setminus \Sigma_j}$ is compact, we then have
 \[ \eps \coloneqq  \dist( \y, P \setminus \Sigma_j) = \dist (\y, \overline{P \setminus \Sigma_j}) > 0.
 \]
 We distinguish three cases:

 \paragraph{\textbf{First case, $0 < s \le \eps/2$:}} In this case one finds
   \[ \bigl\{ \z \in \lowdim{B}_{s}(\y) \colon \dist ( \z, P \setminus \Sigma_j)  >  s \bigr\}=\lowdim{B}_{s}(\y),
   \]
   so
   \[ \lambda_{d-1} \bigl( \bigl\{ \z \in \lowdim{B}_{s}(\y) \colon \dist (\z, P \setminus \Sigma_j) > s \bigr\} \bigr) = \lambda_{d-1} \bigl( \lowdim{B}_{s} (\y) \bigr) =  {\omega_{d-1}} s^{d-1}.
   \]
  \paragraph{\textbf{Second case, $\eps/2 < s \le 2 \eps$:}} Since $s/4 \le \eps/2$, we infer from the first case 
        \begin{align*}
          \lambda_{d-1} \Bigl( \Bigl\{ \z \in \lowdim{B}_{s}(\y) \colon \dist (\z, P \setminus \Sigma_j) > \frac {s}{4} \Bigr) \Bigr\} &\ge \lambda_{d-1} \Bigl( \Bigl\{ \z \in \lowdim{B}_{ \frac {s}{4}}(\y) \colon \dist (\z, P \setminus \Sigma_j) > \frac {s}{4} \Bigr\} \Bigr) \\
          &\ge \omega_{d-1} \frac {s^{d-1}}{4^{d-1}}.
        \end{align*}
  \paragraph{\textbf{Third case, $2 \eps < s \le 1$:}}
        From the fact that $\overline {P \setminus \Sigma_j}$ is compact, we not only get that
        $\eps > 0$, but we also obtain the existence of a point $(\y^*,0) \in
        \mathrm{bd}_{\partial Q_-}(\Sigma_j)$ with $\| \y - \y^*\|_{\R^{d-1}} = \eps$. Since $\lowdim{B}_{s - \eps}(\y^*) \subseteq \lowdim{B}_{s}(\y)$, this yields
        \[ \lambda_{d-1} \Bigl( \Bigl\{ \z \in \lowdim{B}_{s}(\y) \colon \dist (\z, P \setminus \Sigma_j) > \frac {c_0}{2}s \Bigr\} \Bigr) \ge \lambda_{d-1} \Bigl( \Bigl\{ \z \in \lowdim{B}_{s-\eps}(\y^*) \colon \dist (\z, P \setminus \Sigma_j) >  \frac {c_0}{2}{s} \Bigr\} \Bigr).
        \]
        The condition $2\eps < s$ implies $\frac{c_0}{2} s < c_0 (s - \eps)$. 
        Using this and Lemma~\ref{l-extendfact}, we continue to estimate
        \[
          \ge  \lambda_{d-1} \bigl( \bigl\{ \z \in \lowdim{B}_{s-\eps}(\y^*) \colon \dist (\z, P \setminus \Sigma_j) > c_0 (s - \eps \bigr\} \bigr) \ge  c_1 (s - \eps)^{d-1} \ge  \frac{c_1}{2^{d-1}} s^{d-1}.
        \]
        Invoking once more the observation from the end of the proof of Lemma~\ref{l-extendfact}, we deduce the claim.
\end{proof}

Let, in all what follows, $\hat c_0, \hat c_1$ be the constants from Lemma~\ref{l-5.7}. Also, we will
often use the decomposition $\R^d \ni \x = (\bar\x,x_d) \in \R^{d-1} \times \R$.

For $t \in \R$, we define the mapping $\psi_t \colon \R^d \to \R^d$ by
\begin{equation}
 \psi_t(\x) = \psi_t\bigl( (\bar\x,x_d) \bigr) \coloneqq 
\bigl(\bar\x, x_d - t \dist(\bar\x, P \setminus \Sigma_j)\bigr).\label{eq:def-psi}
\end{equation}
Later on we will transform our problem again under the mapping $\psi_t$ for a suitably chosen value of $t$ and afterwards reflect it in correspondence with Lemma~\ref{p-spiegel}. In order to justify this transformation, we first prove a little lemma.
\begin{lemma} \label{t-hilfs} Consider $\psi_t$ be as in~\eqref{eq:def-psi}. Then the following holds true:
 \begin{enumerate}[(a)] 
  \item\label{t-hilfs:i} The function $\R^d \ni \x = (\bar\x,x_d) \mapsto \dist(\bar\x, P \setminus
    \Sigma_j)$ is a Lip\-schitz contraction. 
  \item\label{t-hilfs:ii} For every $t \in \R$, the function $\psi_t$ is Lip\-schitz continuous and bijective with
    inverse $\psi_{-t}$. In particular, the inverse is also Lip\-schitz continuous.
  \item\label{t-hilfs:iii} For every $t \in \R$, the function $\psi_t$ is volume preserving.
\end{enumerate} 
\end{lemma}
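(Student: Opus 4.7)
The plan is to handle the three parts in order, leaning on standard Lipschitz calculus throughout. For Part~\ref{t-hilfs:i}, I would decompose the map as the composition of the projection $\R^d \ni (\bar\x, x_d) \mapsto \bar\x \in \R^{d-1}$, which is a $1$-Lipschitz map, with the distance function $\bar\x \mapsto \dist(\bar\x, P \setminus \Sigma_j)$, which is a $1$-Lipschitz map from $\R^{d-1}$ to $\R$ by the classical triangle-inequality argument valid for the distance to any fixed set. The composition of two $1$-Lipschitz maps is a $1$-Lipschitz contraction, which yields the claim.

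For Part~\ref{t-hilfs:ii}, I would abbreviate $g(\bar\x) \coloneqq \dist(\bar\x, P \setminus \Sigma_j)$, so that $\psi_t(\bar\x, x_d) = (\bar\x, x_d - t g(\bar\x))$. Since the first $d-1$ coordinate functions are the identity and the last differs from the identity by $-t g$, Lipschitz continuity of $\psi_t$ with constant at most $1 + |t|$ is immediate from Part~\ref{t-hilfs:i}. Bijectivity and the formula $\psi_t^{-1} = \psi_{-t}$ then follow at once from the semigroup-type identity
\[ \psi_s \circ \psi_t(\bar\x, x_d) = \bigl(\bar\x, x_d - (s+t)\, g(\bar\x)\bigr) = \psi_{s+t}(\bar\x, x_d), \]
which is a direct substitution (the first $d-1$ coordinates being preserved, $g$ is left unchanged along the way), specialized to $s = -t$ and $s = t$.

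For Part~\ref{t-hilfs:iii}, the plan is to exploit the block structure of the derivative of $\psi_t$. By Rademacher's theorem, $\psi_t$ is differentiable almost everywhere. At any such point, the Jacobian matrix $D\psi_t$ is lower triangular: the upper-left $(d-1) \times (d-1)$ block is the identity, the upper-right column vanishes, the lower-right entry equals $1$, and the lower-left row consists of the partials $-t\, \partial_j g$ for $j = 1, \dots, d-1$. Consequently $|\det D\psi_t| = 1$ almost everywhere. Combining this with the bijectivity from Part~\ref{t-hilfs:ii} and the area formula for Lipschitz maps, as in~\cite[Ch.~3.3]{ev/gar}, gives $\lambda_d(\psi_t(M)) = \lambda_d(M)$ for every Lebesgue measurable set $M \subseteq \R^d$. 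I anticipate no serious obstacle at any step; the only nontrivial input is the almost-everywhere differentiability of the Lipschitz function $g$ in Part~\ref{t-hilfs:iii}, which is handled at once by Rademacher.
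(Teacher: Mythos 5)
Your argument is correct and takes essentially the same route as the paper's proof: for (a) the decomposition of the map as a $1$-Lipschitz projection followed by a $1$-Lipschitz distance function, for (b) direct verification of $\psi_t^{-1}=\psi_{-t}$, and for (c) almost-everywhere differentiability, a (lower-)triangular Jacobian with unit diagonal, and the change-of-variables/area formula for Lipschitz maps from \cite[Ch.~3.3]{ev/gar}. Your write-up is merely more explicit than the paper's terse version, notably in spelling out the semigroup identity $\psi_s\circ\psi_t=\psi_{s+t}$ and in invoking Rademacher's theorem by name.
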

\begin{proof}
The function under consideration in~\ref{t-hilfs:i} is the concatenation of the projection
 $\R^d \ni \x \mapsto (\bar\x,0)$ onto $[x_d =0]$ and the restriction of the function $\R^d \ni
 \x \mapsto \dist(\x, P \setminus \Sigma_j)$ to $\R^{d-1} \times \{0\}$. Both of these functions
 are Lipschitz continuous contractions, thus so is the considered concatenation.
 
For~\ref{t-hilfs:ii}, the first assertion follows from~\ref{t-hilfs:i}, and the second is easy
to verify.

Finally, it is clear that the determinant of the Jacobian of $\psi_t$  (cf.~\cite[Chapter~3.2.2]{ev/gar}) is identically $1$ a.e., thus the assertion~\ref{t-hilfs:iii} follows from \cite[Chapter~3.3.3 Theorem~2]{ev/gar}.
\end{proof}
In the following we choose $t = \frac{3}{\hat c_0}$ and abbreviate $\psi \coloneqq  \psi_{3/\hat c_0}$. We transform \eqref{e-prosubgeb2} under $\psi$ to a problem
\begin{equation} \label{e-prosubgeb3}
-\nabla \cdot \omega \nabla w = h \in W_{\Sigma_\Delta}^{-1,p}(Q_\Delta),
\end{equation}
where the resulting domain is $Q_\Delta\coloneqq \psi(Q_-)$ and the new Dirichlet boundary part
is $\Sigma_\Delta \coloneqq \psi(\Sigma_j)$. We suppress the dependence on $j$ here, so $w$ is
the transformation of $w_j$ by slight abuse of notation, and $h$ is the transformed $g_j$. Furthermore, the resulting coefficient function $\omega$ is again real, elliptic
and bounded thanks to Proposition~\ref{p-transform}.

The
crucial effect of the transformation $\psi$ is that the new Neumann boundary part
$N_\Delta \coloneqq \partial Q_\Delta \setminus \Sigma_\Delta$ is identical to the old Neumann
part $P \setminus \Sigma_j$ and that
$\overline{N_\Delta} = \overline{P \setminus \Sigma_j} = \partial Q_\Delta \cap
\overline{P}$. In particular, $\partial Q_\Delta \cap P = \partial Q_\Delta \cap [x_d = 0]$ consists of Neumann boundary
only. Thus, the geometry of the problem~\eqref{e-prosubgeb3} is now exactly of the shape needed to reflect the problem across the plane $[x_d=0]$, according to Lemma~\ref{p-spiegel}. We end up with the domain
\[ \Lambda \coloneqq  Q_\Delta \cup N_\Delta \cup \bigl\{\z = (\bar \z,z_d) \in \R^d \colon (\bar
  \z,-z_d) \in Q_\Delta \bigr\}, 
\]
while the new coefficient function $\hat\omega$ is again real, bounded and elliptic and the
resulting right hand side $\hat h$ belongs to the space $W^{-1,p}(\Lambda)$ with
$p>d$. Lemma~\ref{p-spiegel} already tells us that the solution $\hat w$ of the equation on
$\Lambda$ belongs to $W^{1,2}_0(\Lambda)$. Thus, in order to infer H\"older continuity for $\hat
w$ by Proposition~\ref{p-lady}, the only thing that is left to verify is that our final geometry
satisfies Assumption~\ref{a-Interface}~\ref{a-Interface:i}. This will be the main part of the proof.
\begin{lemma} \label{l-afat}
The domain $\Lambda$ is of class $(A_\gamma)$ for some $\gamma \in (0,1)$.
\end{lemma}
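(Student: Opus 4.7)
The plan is to establish the $(A_\gamma)$-property by a case analysis over $y \in \partial \Lambda$ and $r \in (0,1]$, exploiting the reflection symmetry of $\Lambda$ across the plane $[x_d = 0]$ to reduce to $y_d \leq 0$. Under this reduction $\Lambda \cap \{x_d \leq 0\} = Q_\Delta = \psi(Q_-)$, and Lemma~\ref{t-hilfs} together with Lemma~\ref{l-erhaeltsich} first yields that $Q_\Delta$ is itself of class $(A_{\gamma_\psi})$ at $\partial Q_\Delta$ with some $\gamma_\psi \in (0,1)$ depending only on $\hat c_0$, since $\psi$ is bi-Lipschitz on $\R^d$ with constants controlled by $\hat c_0$ and the cube $Q_-$ is manifestly of class $(A_{1/2})$.

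For the case analysis, fix $y = (\bar y, y_d) \in \partial \Lambda$ with $y_d \leq 0$ and $r \in (0,1]$. If $|\bar y_i| = 1$ for some $i < d$, the trivial inclusion $\Lambda \subseteq \{|x_i| < 1\}$ already gives $\lambda_d(B_r(y) \setminus \Lambda) \geq \tfrac12 \omega_d r^d$. If $y$ lies on the transformed bottom face of $Q_\Delta$, so $y_d \leq -1$, or on the Dirichlet portion of the top graph with $y_d \leq -r$, then $B_r(y) \subseteq \{x_d \leq 0\}$ and consequently $B_r(y) \cap \Lambda$ equals $B_r(y) \cap Q_\Delta$ up to a null set; the $(A_{\gamma_\psi})$-bound for $Q_\Delta$ then transfers verbatim.

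The essential remaining case is $y$ on the upper graph of $\partial Q_\Delta$ with $y_d \in (-r,0]$, where $B_r(y)$ crosses the midplane and sees the Neumann reflection. Here $y_d = -\tfrac{3}{\hat c_0} \dist(\bar y, P \setminus \Sigma_j)$, and since $N_\Delta = P \setminus \Sigma_j$ is relatively open in $P$ (so its points are interior to $\Lambda$) while $\Sigma_j$ is closed, $\bar y$ is forced to lie in $\Sigma_j \cap P$ and Lemma~\ref{l-5.7} applies. Invoking it at $\bar y$ with scale $s = r/4$ delivers a measurable set $S \subseteq \lowdim{B}_{r/4}(\bar y)$ with $\lambda_{d-1}(S) \geq \hat c_1 (r/4)^{d-1}$ and $\dist(\bar z, P \setminus \Sigma_j) > \hat c_0 r/4$ for every $\bar z \in S$. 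By the very definition of $\Lambda$ and the shear factor $3/\hat c_0$ in $\psi$, each such $\bar z$ is crowned by a vertical gap $|z_d| < 3r/4$ disjoint from $\Lambda$. A short elementary bookkeeping---checking that the cylindrical slab $S \times J_{y_d}$ with $J_{y_d} = (y_d - r/2, y_d + r/2) \cap (-3r/4, 3r/4)$ is contained in $B_r(y) \setminus \Lambda$ and that $|J_{y_d}| \geq r/4$ for every $y_d \in (-r,0]$---together with Fubini then yields $\lambda_d(B_r(y) \setminus \Lambda) \gtrsim \hat c_1 r^d$. Taking $\gamma$ to be the minimum of the three constants produced in the cases above finishes the proof.

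The main obstacle is the third case: the delicate arithmetic matching between the horizontal scale $s$ from Lemma~\ref{l-5.7}, the vertical gap $\tfrac{3}{\hat c_0}\cdot \hat c_0 s$ produced by the shear, and the ball radius $r$. The choice of the shear factor $3/\hat c_0$ made earlier in the construction of $\psi$ is exactly what makes the vertical gap comfortably dominate the ball radius, so that the Fubini step produces a lower bound of the correct order $r^d$; any smaller factor would produce a gap only of order $r$ and leave the estimate borderline at best at scales where $|y_d|$ is of order $r$.
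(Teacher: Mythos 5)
Your proof is correct and follows the same broad strategy as the paper (split by boundary location, exploit reflection symmetry, use Lemma~\ref{l-5.7} at the Dirichlet/Neumann interface, use half-ball estimates for the lateral and bottom faces), but the technical execution differs in two places. First, you establish upfront that $Q_\Delta = \psi(Q_-)$ is itself of class $(A_{\gamma_\psi})$ via Lemma~\ref{l-erhaeltsich}, and then invoke this bound uniformly for all points $\y$ with $B_r(\y) \subseteq \{x_d \le 0\}$, whether on the bottom, on lateral faces below the midplane, or on the Dirichlet portion of the upper graph with $y_d \le -r$. The paper instead handles the shallow regime $r < |y_d|$ of the upper graph directly via the volume-preserving property of $\psi^{-1}$ and the fact that $\psi^{-1}(\y) \in P$, and treats the bottom separately. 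Your unification is cleaner and slightly more economical. Second, and more substantively, for the critical case $y_d \in (-r,0]$ the paper applies Cavalieri's principle, slicing along $z_d$ over $s \in [r/2, r/\sqrt{2}]$ and invoking Lemma~\ref{l-5.7} at a continuum of scales $s$. You instead apply Lemma~\ref{l-5.7} only at the single fixed scale $s = r/4$, obtaining a horizontal set $S$ of measure $\gtrsim r^{d-1}$ whose points carry a vertical gap of width $\ge 3r/2$ disjoint from $\Lambda$, and you then estimate the measure of the cylindrical slab $S \times J_{y_d}$ directly by Fubini. This avoids the integral over slices entirely and makes the role of the shear factor $3/\hat c_0$ even more transparent: it is exactly what makes the vertical gap large relative to $r$, so a single slab already captures enough volume. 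Both approaches produce the same order $r^d$ lower bound; yours is arguably the more elementary.

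One detail worth spelling out if you write this up: for the gap argument to close you also need $(\bar z, 0) \notin N_\Delta$ for $\bar z \in S$, which holds because $\dist(\bar z, P \setminus \Sigma_j) > \hat c_0 r/4 > 0$; and for $\bar z \notin (-1,1)^{d-1}$ one just uses $\Lambda \subseteq (-1,1)^{d-1} \times \R$. You implicitly use both, but they deserve a sentence.
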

\begin{proof}
 The boundary of $\Lambda$ is the union of the sets $\psi(\Sigma_j \cap P)$ and $\psi(\partial
 Q_- \setminus P)$ and their reflected  counterparts. We show the assertion for the points from
 $\psi(\Sigma_j \cap P)$ and from $\psi(\partial Q_- \setminus P)$, the proof for points from the reflected parts is then analogous.

 Let $r \in (0,1]$ and assume $\y=(\bar\y,y_d) \in \psi(\Sigma_j\cap P)$. Then $\y$ is necessarily of the
 form $\bigl(\bar\y, -3/\hat c_0 \cdot \dist (\bar\y, P \setminus \Sigma_j)\bigr)$ with $(\bar\y,0) \in
 P$. Now let first $r < 3/\hat c_0 \cdot \dist(\bar\y, P \setminus \Sigma_j)$. Then the ball
 $B_r(\y)$ lies completely in the half space $[x_d < 0]$. This gives 
 \[ B_r(\y) \setminus \Lambda = B_r(\y) \setminus Q_\Delta
  = B_r(\y) \setminus \psi(Q_-).
 \]
 Applying the volume-preserving map $\psi^{-1}$, cf. Lemma~\ref{t-hilfs}, and using that $\psi^{-1}(\y)
 \in P$, one deduces the inequality
 \begin{align*}
  \lambda_d \bigl( B_r(\y) \setminus \Lambda \bigr) &= \lambda_d \bigl( \psi^{-1} ( B_r(\y) ) \setminus Q_- \bigr) \\
  &\ge \lambda_d \bigl( \psi^{-1} ( B_r(\y) ) \cap [x_d>0]\bigr ) = \frac 12 \lambda_d \bigl( \psi^{-1} (B_r(\y)) \bigr). \\
  \intertext{Since $\psi^{-1}$ is Lipschitz continuous by Lemma~\ref{t-hilfs}, the set $\psi^{-1} \bigl( B_r(\y) \bigr)$ contains the ball $B_{\ell r} \bigl( (\bar\y,0) \bigr)$, where $\ell$ is the Lipschitz constant of $\psi^{-1}$. Thus, we can continue to estimate}
  &\ge \frac 12 \ell^d \omega_d r^d = \frac 12 \ell^d \lambda_d \bigl( B_r(\y) \bigr).
 \end{align*}
 Now we consider the second case $r \ge 3/\hat c_0 \cdot \dist(\y, P \setminus \Sigma_j)$. Let
 \[ B_r^-(\y) \coloneqq  B_r(\y) \cap \Bigl\{\z \in \R^d \colon z_d \le -3/\hat c_0 \cdot \dist(\bar\y, P \setminus \Sigma_j) \Bigr\}.
 \]
 Since $y_d = -3/\hat c_0 \cdot \dist(\bar\y, P \setminus \Sigma_j)$, this is exactly the 'lower' half of $B_r(\y)$.
 By construction of $\Lambda$, one has
 \begin{align*}
  B_r(\y) \setminus \Lambda &\supseteq B^-_r(\y) \setminus \Lambda =
  B^-_r(\y) \setminus Q_\Delta.
\end{align*} 
Due to the choice of $\psi$, we have $Q_\Delta \subseteq \bigl\{
(\bar\z,z_d) \in \R^d\colon   z_d \leq -3/\hat c_0 \cdot \dist(\bar\z, P
\setminus \Sigma_j)\}$. Thus, we may continue
\begin{align*}
B_r(\y) \setminus \Lambda  &\supseteq B^-_r(\y) \setminus \Bigl\{ (\bar\z,z_d) \in \R^d\colon  z_d \leq - \frac{3}{\hat c_0} \dist (\bar\z, P \setminus \Sigma_j) \Bigr\} 
  \\
  &= B^-_r(\y) \cap \Bigl\{(\bar\z,z_d) \in \R^d\colon  \frac{3}{\hat c_0} \dist (\bar\z, P \setminus \Sigma_j) > -z_d \Bigr\}.
\end{align*}
We aim to parametrize the last set by layers along the $z_d$-direction. To this end, for $s \in [0,r]$
we denote by $H_s$ the hyperplane
$\bigl\{ (\bar\z,z_d) \in \R^d\colon z_d = -3/\hat c_0 \cdot \dist(\bar\y, P \setminus \Sigma_j)
- s \bigr\}$. Then we obtain
\begin{align*}
 B_r(\y) \setminus \Lambda &\supseteq B^-_r(\y) \cap \Bigl( \bigcup_{s \in [0,r]} H_s \Bigr) \cap \Bigl\{(\bar\z,z_d) \in \R^d\colon  \frac{3}{\hat c_0} \dist(\bar\z, P \setminus \Sigma_j) > -z_d \Bigr\} \\
  &=\bigcup_{s \in [0,r]} \bigl( B^-_r(\y) \cap H_s \bigr ) \cap \Bigl\{ (\bar\z,z_d) \in \R^d\colon  \frac{3}{\hat c_0} \dist(\bar\z, P \setminus \Sigma_j) > -z_d \Bigr\} \\
 %  &= \bigcup_{s \in [0,r]} \Bigl\{ (\bar\z,z_d) \in \R^d\colon  z_d =
 %  - \frac{3}{\hat c_0} \dist(\bar\y, P \setminus \Sigma_j) - s,~\bar z \in
 %  \lowdim{B}_{\sqrt {r^2-s^2}}(\bar\y), \\
 % & \qquad \qquad \qquad \qquad\qquad
 % \frac{3}{\hat c_0} \dist(\bar\z, P \setminus \Sigma_j) >
 % \frac{3}{\hat c_0}\dist(\bar\y, P \setminus \Sigma_j) + s \Bigr\} \\
 &\eqqcolon \bigcup_{s \in [0,r]} G_s
\end{align*}
with
\begin{multline*}
  G_s \coloneqq \Bigl\{ (\bar\z,z_d) \in \R^d\colon  z_d =
  - \frac{3}{\hat c_0} \dist(\bar\y, P \setminus \Sigma_j) - s,~\bar z \in
  \lowdim{B}_{\sqrt {r^2-s^2}}(\bar\y), \\
% & \qquad \qquad \qquad \qquad\qquad
 \frac{3}{\hat c_0} \dist(\bar\z, P \setminus \Sigma_j) >
 \frac{3}{\hat c_0}\dist(\bar\y, P \setminus \Sigma_j) + s \Bigr\}.
\end{multline*}
We now note the representation $G_s = \lowdim{G}_s \times \bigl\{- \frac{3}{\hat
  c_0} \dist(\bar\y, P \setminus \Sigma_j) - s\bigr\}$ with
\begin{equation*}
  \lowdim{G}_s =  \lowdim{B}_{\sqrt
    {r^2-s^2}}(\bar\y) \cap \Bigl\{ \bar\z \in \R^{d-1} \colon
  \frac{3}{\hat c_0} \dist(\bar\z, P \setminus \Sigma_j) >
  \frac{3}{\hat c_0      }\dist(\bar\y, P \setminus \Sigma_j) + s
  \Bigr\}.
\end{equation*}
Thus, applying Cavalieri's principle,
\begin{equation*}
  \lambda_d \bigl( B_r(\y) \setminus \Lambda \bigr) \geq  \int _0^r
  \lambda_{d-1}(G_s) \dd s = \int _0^r
  \lambda_{d-1}(\lowdim{G}_s) \dd s \geq \int_\frac{r}{2}^{\frac{r}{\sqrt {2}}}  \lambda_{d-1}(\lowdim{G}_s) \dd s.
\end{equation*}
For $s \in [0,\frac{r}{\sqrt {2}}]$ we have
    $\lowdim{B}_{s}(\bar\y) \subseteq \lowdim{B}_{\sqrt {r^2-s^2}}
    (\bar\y)$. On the other hand, for $s \ge \frac {r}{2}$ the
    supposition $r \ge 3/\hat c_0 \cdot \dist(\bar\y, P \setminus
    \Sigma_j)$ yields $3s \ge r + s \ge 3/\hat c_0 \cdot \dist(\bar\y,
    P \setminus \Sigma_j) + s$. So, for $\frac{r}2 \leq s \leq \frac{r}{\sqrt{2}}$,
    \begin{equation*}
      \lowdim{G}_s \supseteq  \lowdim{B}_s(\bar\y) \cap \Bigl\{ \bar\z \in \R^{d-1} \colon \frac{3}{\hat c_0} \dist(\bar\z, P \setminus \Sigma_j) > 3s \Bigr\},
    \end{equation*}
    and using Lemma~\ref{l-5.7} we can
    continue to estimate:
    \begin{align*}
\lambda_d \bigl( B_r(\y) \setminus \Lambda \bigr)& \geq
      \int_{\frac r2}^{\frac{r}{\sqrt{2}}} \lambda_{d-1} \Bigl( \lowdim{B}_s(\bar\y) \cap \Bigl\{ \bar\z \in \R^{d-1} \colon \frac{3}{\hat c_0} \dist(\bar\z, P \setminus \Sigma_j) > 3s \Bigr\} \Bigr) \dd s\\
  & = \int_{\frac r2}^{\frac{r}{\sqrt{2}}} \lambda_{d-1} \bigl( \bigl\{ \bar\z \in \lowdim{B}_s(\bar\y) \colon \dist(\bar\z, P \setminus \Sigma_j) > \hat c_0 s \bigr\} \bigr) \dd s \\
  & \geq \hat c_1 \int_{\frac r2}^{\frac{r}{\sqrt{2}}} s^{d-1} \dd s =
  \frac{\hat c_1}{d} \Bigl[ \Bigl( \frac {1}{2} \Bigr)^\frac{d}{2} -
  \Bigl( \frac{1}{2} \Bigr)^d \Bigr] r^d = \frac{\hat c_1(2^{\frac d2}
    - 1)}{d \omega_d 2^d} \lambda_d\bigl(B_r(\y)\bigr).
\end{align*}
This was the claim for $\y \in \Sigma_j \cap P$.

It remains to discuss the points $\y \in \psi(\partial Q_- \setminus P)$. Clearly,
$\Lambda$ is contained in the 'strip' $(-1,1)^{d-1} \times \R$, and $\psi$ maps the lateral faces $M\coloneqq
\{-1,1\}^{d-1} \times [-1,0]$ of $Q_-$ into $\{-1,1\}^{d-1} \times (-\infty, 0]$ which are
exactly (the 'lower' half
of) the faces of $(-1,1)^{d-1} \times \R$. Thus, for $\y
\in \psi(M)$, the set $B_r(\y) \setminus \Lambda$ contains at least half of the ball $B_r(\y)$
and we have
\begin{align*}
 \lambda_d \bigl( B_r(\y) \setminus \Lambda \bigr) \ge \lambda_d \bigl( B_r(\y) \setminus
 ((-1,1)^d \times \R)\bigr) \ge  \frac 12 \lambda_d(B_r(\y)).
\end{align*}
The only case left is $\y \in \psi \bigl( \{-1\} \times (-1,1)^{d-1} \bigr)$, i.e., $\y$ is in
the image of the 'bottom' of the half cube. Then the ball $B_r(\y)$ lies completely inside the
'lower' halfplane $[z_d = 0]$ for all $r \in (0,1]$. Thus, since $\psi$ was volume-preserving,
\[ B_r(\y) \setminus \Lambda = B_r(\y) \setminus \psi(Q_-) = B_r \bigl( \psi(\psi^{-1}(\y)) \bigr) \setminus \psi(Q_-).
\]
By Lemma~\ref{l-erhaeltsich} we get the desired estimate once we can prove it for the
untransformed geometry $B_r(\psi^{-1}(\y)) \setminus Q_-$ where $\psi^{-1}(\y)$ is in the bottom
face of the unit cube. But this is straightforward since $Q_-$ is convex.
\end{proof}
With Lemma~\ref{l-afat} at hand, we complete the proof of Theorem~\ref{t-result} easily with the
pendant to Lemma~\ref{lem:localize-N}; its proof is completely analogous to the one of
Lemma~\ref{lem:localize-N} up to the additional transformation $\psi$.
\begin{lemma}\label{lem:localize-interface}
 Let $j \in \{1,2, \dots, m\}$ with $\x_j \in D\cap \overline{N}$. Then there is some $\alpha_j > 0$ independent of $f$ such that $\eta_j v \in C^{\alpha_j}(\Omega)$ and we have
 \[ \|\eta_j v\|_{C^{\alpha_j}(\Omega)} \le c \|f\|_{W^{-1,q}_D(\Omega)}.
 \]
 The constant $c$ depends only on geometry, and on the $L^\infty(\Omega)$-bound and ellipticity constant of the given coefficient
  function $\mu$.
\end{lemma}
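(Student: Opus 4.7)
The plan is to follow the same steps as in the proof of Lemma~\ref{lem:localize-N}, but interpolate the additional transformation $\psi$ from~\eqref{eq:def-psi} between the bi-Lipschitz chart $\phi_{\x_j}$ and the reflection. More concretely, I would proceed in the following order.

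First, I would invoke Lemma~\ref{l-project} together with Lemma~\ref{e-abschfbullet} and Lemma~\ref{l-extendprojected} to obtain $p>d$ such that $\eta_j v$ satisfies the localized problem~\eqref{e-prosubgeb1} with right-hand side $f_j \in W^{-1,p}_{D_j}(\Omega \cap V_{\x_j})$, with $\|f_j\|_{W^{-1,p}_{D_j}(\Omega \cap V_{\x_j})} \leq c\|f\|_{W^{-1,q}_D(\Omega)}$. I would then transform this equation under $\phi_{\x_j}$, which by Proposition~\ref{p-transform} yields~\eqref{e-prosubgeb2} on $Q_-$ with transformed Dirichlet part $\Sigma_j = \phi_{\x_j}(D_j)$ and with $\|g_j\|_{W^{-1,p}_{\Sigma_j}(Q_-)} \leq c\|f_j\|_{W^{-1,p}_{D_j}(\Omega\cap V_{\x_j})}$; note that the coefficients remain real, bounded and elliptic by Proposition~\ref{p-transform}~\ref{p-transform:iv}.

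Next, since $\x_j \in D \cap \overline{N}$, I cannot just reflect as in Lemma~\ref{lem:localize-N} because $\Sigma_j$ occupies part of the midplate $P$ in an uncontrolled way. Here I apply Lemma~\ref{t-hilfs} to transform~\eqref{e-prosubgeb2} under $\psi = \psi_{3/\hat c_0}$: since $\psi$ is bi-Lipschitz, Proposition~\ref{p-transform} again produces the equation~\eqref{e-prosubgeb3} on $Q_\Delta = \psi(Q_-)$ with real, bounded, elliptic coefficient $\omega$ and right-hand side $h \in W^{-1,p}_{\Sigma_\Delta}(Q_\Delta)$ continuously controlled by $\|g_j\|_{W^{-1,p}_{\Sigma_j}(Q_-)}$. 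The whole point of $\psi$ is that $\partial Q_\Delta \cap P = \overline{N_\Delta}$, so the Neumann part now occupies exactly an open portion of the plane $[x_d=0]$ and nothing else there, which is the setup required to apply Lemma~\ref{p-spiegel}.

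Thus, I would reflect across $[x_d=0]$ using Lemma~\ref{p-spiegel}: the reflected function $\widehat{w}$ lies in $W^{1,2}_0(\Lambda)$ and solves $-\nabla \cdot \widehat \omega \nabla \widehat w = \widehat h$ on $\Lambda$, where $\widehat h \in W^{-1,p}(\Lambda)$ with continuous dependence, and $\widehat\omega$ is real, bounded and elliptic. By Lemma~\ref{l-afat}, $\Lambda$ is of class $(A_\gamma)$, so Proposition~\ref{p-lady} (together with Remark~\ref{r-interpret}) applies and yields some $\alpha_j > 0$, independent of $f$, with
\begin{equation*}
  \|\widehat w\|_{C^{\alpha_j}(\Lambda)} \leq c \|\widehat h\|_{W^{-1,p}(\Lambda)}.
\end{equation*}
Unwinding the chain of transformations via Proposition~\ref{p-transform}~\ref{p-transform:i} (H\"older norms are preserved up to the Lipschitz constants of $\psi$ and $\phi_{\x_j}$), restricting to $Q_-$ (hence to $\Omega \cap V_{\x_j}$), and exploiting that $\supp \eta_j$ has positive distance to $\Omega \setminus V_{\x_j}$ in order to extend trivially, I arrive at the desired estimate $\|\eta_j v\|_{C^{\alpha_j}(\Omega)} \le c\|f\|_{W^{-1,q}_D(\Omega)}$. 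The uniformity in the geometry and in the bounds on $\mu$ follows from Remark~\ref{rem:uniform-procedure} together with the fact that every constant appearing above depends only on the Lipschitz/ellipticity data. The main (already resolved) obstacle was verifying the geometric $(A_\gamma)$-property for $\Lambda$, which is exactly what Lemma~\ref{l-afat} provides; everything else is bookkeeping along the transformation chain.
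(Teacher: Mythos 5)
Your proof follows precisely the route the paper indicates: it is the proof of Lemma~\ref{lem:localize-N} with the extra transformation $\psi$ spliced in between the chart $\phi_{\x_j}$ and the reflection, so that Lemma~\ref{l-afat} supplies the $(A_\gamma)$-property needed for Proposition~\ref{p-lady}. This matches the paper's argument (which it leaves as ``completely analogous\ldots up to the additional transformation $\psi$''), and the bookkeeping of norms and uniformity via Proposition~\ref{p-transform} and Remark~\ref{rem:uniform-procedure} is handled correctly.
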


We have shown in Lemmata~\ref{lem:localize-interior},~\ref{lem:localize-N}
and~\ref{lem:localize-interface} that all localized functions $\eta v_j$ for $j=0,\dots,m$ are
H\"older continuous of (possibly different) degree $\alpha_j$ with an estimate against
$f \in W^{-1,q}_D(\Omega)$ which depends only on geometry, and on the $L^\infty(\Omega)$-bound
and ellipticity constant of the given coefficient function $\mu$. Thus, if we choose $\alpha$ to
be the
minimum of the $\alpha_j$, the claim of Theorem~\ref{t-result} follows and we are done.

% Moreover, the extension of $\eta_j v$ by $0$ to the whole of $\Omega$ preserves this  H\"older continuity, since $\supp(\eta_j)$ has a positive distance to $\Omega \setminus  V_{\x_j}$. Finally, one observes that the H\"older norms of the elements $\eta_j v$ can be estimated by the norm $\|f\|_{W^{-1,q}_D(\Omega)}$, since all operations, as localisation, transformation and reflections respect the mutual continuous dependence of the right hand sides.

\end{document}